\documentclass[12pt]{report}
\usepackage[english]{babel}
\usepackage{blindtext}
\usepackage{url}
\usepackage[utf8]{inputenc}
\usepackage{amsmath}
\usepackage{subfigure}
\usepackage{fancyhdr}
\usepackage{vmargin}
\usepackage{amsfonts}
\usepackage{latexsym}
\usepackage{amsthm}
\usepackage{graphicx}
\usepackage{multirow}
\usepackage{ulem}
\usepackage{pgf,tikz}
\usetikzlibrary{arrows}
\usepackage{nicematrix}
\usepackage{cleveref}
\usepackage{parskip}
\usepackage{semantic}
\usepackage{indentfirst}
\usepackage{nomencl}
\usepackage{setspace}
\usepackage{datetime}

\pagestyle{empty}

\makenomenclature

\usepackage{etoolbox}
\renewcommand\nomgroup[1]{%
  \vspace{20pt} \item[\bfseries
  \ifstrequal{#1}{A}{Set notation}{%
  \ifstrequal{#1}{B}{Pattern notation}{%
  \ifstrequal{#1}{C}{4-tuple-letter notation}{%
  \ifstrequal{#1}{D}{Other notation}{}}}}%
]
}

\setlength{\textwidth}{6.3in} 
\setlength{\textheight}{8.75in}
\setlength{\topmargin}{-0.3in} 
\setlength{\oddsidemargin}{-.0in}

\newtheorem{Lem}{Lemma}
\newtheorem{Cor}{Corollary}
\newtheorem{The}{Theorem}

\theoremstyle{definition}
\newtheorem{Exam}{Example}

\newtheorem{Rem}{Remark}
\newtheorem{Def}{Definition}

\newcommand*{\dprime}{^{\prime\prime}\mkern-1.2mu}

\newcommand*\numbAtBack[1]{\hspace*{0em plus 1fill}\makebox{(#1)}}

\setlength{\parindent}{1.5em} 
\setlength{\parskip}{1em}
\newdate{date}{29}{01}{2022}

\def\v{\mathtt{v}}
\def\d{\mathtt{d}}
\def\p{\mathtt{p}}
\def\r{\mathtt{r}}
\def\l{\mathtt{l}}

\def\Sset{\mathcal{S}}
\def\close{\mathrm{closer}}
\def\open{\mathrm{opener}}
\def\out{\mathrm{outsider}}
\def\insi{\mathrm{insider}}
\def\red{\mathtt{red}}
\def\Destop{\mathtt{Destop}}
\def\Desbot{\mathtt{Desbot}}
\def\eob{\mathtt{-eob}}
\def\inv{\mathtt{inv}}
\def\stat{\mathtt{stat}}
\def\maj{\mathtt{maj}}
\def\mak{\mathtt{mak}}
\def\makl{\mathtt{makl}}
\def\mad{\mathtt{mad}}
\def\madl{\mathtt{madl}}

\def\des{\mathtt{des}}

\def\Dbot{\mathtt{Dbot}}%
\def\Dtop{\mathtt{Dtop}}
\def\DTOP{\mathtt{DTOP}}
\def\DBOT{\mathtt{DBOT}}
\def\Ddif{\mathtt{Ddif}}
\def\Res{\mathtt{Res}}
\def\Les{\mathtt{Les}}

\usepackage[
    backend=biber,
    style=alphabetic,
  ]{biblatex}
\addbibresource{biblist.bib}

\usepackage{graphicx}
\graphicspath{{images/}}

\usepackage{enumitem}
\setlistdepth{9}
\setlist[itemize,1]{label=$\bullet$}
\setlist[itemize,2]{label=$\bullet$}
\setlist[itemize,3]{label=$\bullet$}
\setlist[itemize,4]{label=$\bullet$}
\setlist[itemize,5]{label=$\bullet$}
\setlist[itemize,6]{label=$\bullet$}
\setlist[itemize,7]{label=$\bullet$}
\setlist[itemize,8]{label=$\bullet$}
\setlist[itemize,9]{label=$\bullet$}
\renewlist{itemize}{itemize}{9}

\setcounter{MaxMatrixCols}{20}
\setcounter{secnumdepth}{3}

\setmarginsrb{3 cm}{3 cm}{2 cm}{3 cm}{1 cm}{1.5 cm}{1 cm}{1.5 cm}

\title{BACHELOR THESIS}								
\author{Ta Thi Phuong Lien}								
\date{January 29, 2022}											

\makeatletter
\let\thetitle\@title
\let\theauthor\@author
\let\thedate\@date
\makeatother

\pagestyle{fancy}
\fancyhf{}
\rhead{\theauthor}
\lhead{\thetitle}
\cfoot{\thepage}

\begin{document}


\begin{titlepage}
    \begin{figure}[ht]
	   \minipage{0.6\textwidth}
			\includegraphics[height=2.2cm]{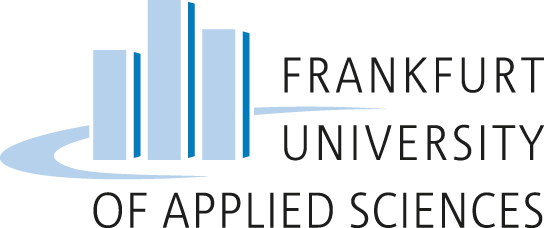}
	   \endminipage
	   \minipage{0.4\textwidth}
	        \vspace*{0.25 cm}
			\includegraphics[height=2.2cm]{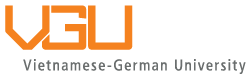}
		\endminipage
	\end{figure}
	\begin{center}
	    {\Large \bfseries Vietnamese German University}\\
	    {\large Department of Computer Science}\\
	    \vspace*{0.5 cm}
        {\large \bfseries Frankfurt University of Applied Sciences}\\
        {\large Faculty of Computer Science and Engineering}\\
        \rule{4cm}{0.4pt}
	\end{center}
	\centering
    \vspace*{1.5 cm}
    {\LARGE \bfseries \thetitle}\\
    \vspace*{1 cm}
	{ \Large \bfseries Mahonian Statistics and Vincular Patterns}\\
	\vspace*{0.3 cm}
	{ \Large \bfseries on Permutations over Multisets}\\
	\vspace*{1.5 cm}
	\begin{center}
	    \begin{tabular}{  m{6cm}  m{6cm} } 
            {\large \bfseries Student name:} & {\large Ta Thi Phuong Lien}\\
            {\large \bfseries Matriculation number:} & {\large 1276082}\\
            {\large \bfseries Supervisor:} & {\large Dr. Tran Thi Thu Huong}\\
            {\large \bfseries Co-Supervisor:} & {\large Dr. Huynh Trung Hieu}
        \end{tabular}
	\end{center}
	\vspace*{1 cm}
	{\large \thedate}\\[2 cm]
 
	\vfill
	
	\thispagestyle{empty}
	
\end{titlepage}


\maketitle

\tableofcontents
\pagebreak

\addcontentsline{toc}{chapter}{Disclaimer}
\chapter*{Disclaimer}
I hereby declare that the information reported in the paper is the result of my own, original, individual work, except where references are made. This thesis is finished under the guidance and supervision of Dr. Tran Thi Thu Huong and Dr. Huynh Trung Hieu in Vietnamese – German University. I also certify that this undergraduate dissertation has not been previously or concurrently submitted for other degrees or other universities institutions.

\vspace*{2cm}
Ta Thi Phuong Lien
\newpage

\addcontentsline{toc}{chapter}{Acknowledgment}
\chapter*{Acknowledgment}
I would like to send my sincere thanks to my supervisor, Dr. Tran Thi Thu Huong, for her patience and support throughout this thesis study. Without her assistance, I would have not been able to finish this thesis.
\newpage

\addcontentsline{toc}{chapter}{List of figures}
\listoffigures

\addcontentsline{toc}{chapter}{Notation}
\mbox{}
\nomenclature[A,01]{\(\Sset_n\)}{The set of all permutation of length $n$}
\nomenclature[A,02]{\(\Sset_(A,m)\)}{The set of all permutation over the multiset $(A,m)$}
\nomenclature[D,01]{\(\mid A \mid\)}{The number of elements in a set $A$}
\nomenclature[B,01]{\(\red(\pi)\)}{The reduced form of the permutation $\pi$}
\nomenclature[A,03]{\(L_A\)}{The set of 4-tuple-letter set of permutations in set $A$}
\nomenclature[D,02]{\(\overline{x,y}\)}{Natural numbers from $x$ to $y$}
\nomenclature[D,03]{\(\pi_i\)}{The $i^{th}$ entry of the permutation (either on set or on multiset) $\pi$}
\nomenclature[C,06]{\(W_{(v,d)}\)}{The 4-tuple-letter that has the value is $v$ and the duplicate index is $d$}
\nomenclature[A,04]{\(W\vert_{\v = v}\)}{The set of 4-tuple-letters in the set $W$ that have their values are $v$}
\nomenclature[B,02]{\((pattern)\vert_{v_1 = v_2}\)}{The number of occurrence of pattern $pattern$ such that the letter $v_1$ in $pattern$ is corresponding to the letter $v_2$ in occurrence}
\nomenclature[C,04]{\(\r(A)\)}{The right embracing number of $A$. $A$ is either letter in permutation or 4-tuple-letter}
\nomenclature[C,05]{\(\l(A)\)}{The left embracing number of $A$. $A$ is either letter in permutation or 4-tuple-letter}
\nomenclature[C,03]{\(\p(A)\)}{The position of $A$. $A$ is either letter in permutation or 4-tuple-letter}
\nomenclature[C,02]{\(\d(A)\)}{The duplicate index of $A$. $A$ is either a letter in permutation or a 4-tuple-letter}
\nomenclature[C,01]{\(\v(A)\)}{The value of $A$. $A$ is either a letter in permutation or a 4-tuple-letter}
\nomenclature[D,04]{\(O(B)\)}{The opener of descent block $B$}
\nomenclature[D,05]{\(C(B)\)}{The closer of descent block $B$}
\nomenclature[D,06]{\(v\eob\)}{$v$-embraceable open block}
\nomenclature[D,07]{\(\# v\eob(S)\)}{The number of $v$-embraceable open block of insertion $S$}
\nomenclature[D,08]{\(\#\)}{The number of}

\begin{singlespace}
\printnomenclature[1.2in]
\end{singlespace}

\addcontentsline{toc}{chapter}{Abstract}
\chapter*{Abstract}
Most Mahonian statistics can be expressed as a linear combination of vincular patterns. This is not only true with statistics on the permutation set, but it can also be applied for statistics on the permutation with repetition set \cite{vincular}. By following the method extending the vincular patterns combinations presented by Kitaev and Vajnovszki \cite{vincular}, we discover 6-vincular-patterns combinations of $\mad$ and $\madl$ extensions that are possible to be Mahonian. Some of these have been proved to be Mahonian on repetitive permutations by Clarke, Steingr{\'i}msson and Zeng  \cite{CLARKE1997237}, while the rest are new statistics extensions.

In this thesis, we determine combinations of vincular pattern extension of $\mad$ and $\madl$ in Clarke, Steingr{\'i}msson and Zeng's paper, which have been proved to be Mahonian on the repetitive permutations. This result will be used to support the proof of Mahonity of the new statistics extensions. We show that these new statistics extensions are also Mahonian by constructing an involution $\Phi$ on repetitive permutations, which preserves the descents statistics and transforms new statistics extensions to Mahonian $\mad$ and $\madl$ extensions of \cite{CLARKE1997237}. \begin{flushright}
\textit{Keywords: vincular pattern, Mahonian, $\mad$, $\madl$}
\end{flushright}

\newpage

\addcontentsline{toc}{chapter}{Introduction}
\chapter*{Introduction}
Various statistics on permutations have been discussed in the past \cite{Babson2000}\cite{CLARKE1996}\cite{Denert}\cite{mahonWord}. The number of inversions is the simplest and best-known Mahonian statistic. The inversions statistics is the number of pairs of indices $(i, j)$ such that $i < j$ and $\pi_i > \pi_j$. MacMahon defined the major index ($\maj$) of a permutation, which has the same distribution as $\inv$, in which the major index statistics is defined as the sum of the indices $i$ such that $\pi_i > \pi_{i+1}$ \cite{mahonWord}. Any permutation statistics with the same distribution as $\maj$ is called Mahonian.

Since then, many new Mahonian statistics have been identified. In 1996, Clarke, Steingr{\'i}msson and Zeng \cite{CLARKE1996} have defined or re-defined new Mahonian permutation statistics $\mad$, $\mak$, $\madl$, $\makl$. These statistics are also proved to be Mahonian not only on permutations but also on words \cite{CLARKE1997237}, more precisely, these statistics are Mahonian on the sets of rearrangements of the letters of a given word $w$. In other words, these four statistics are Mahonian on the permutations over the multisets of letters.

Most Mahonian statistics on permutations can be expressed as a linear combination of vincular patterns \cite{Babson2000}. The notion of a vincular pattern can be extended to words \cite{vincular}. In \cite{CLARKE1997237}, the authors point out two $\mad$ statistics extensions, namely $\mad_1$ and $\mad_2$, and two $\madl$ statistics extensions, namely $\madl_1$ and $\madl_2$, that are Mahonian on repetitive permutations. In this paper, we represent these statistics extensions as combinations of vincular patterns.

By following the method extending the vincular patterns combinations mentioned in \cite{vincular} and running experimental tests, we have found eight extensions of $\mad$ and $\madl$ represented as vincular pattern combinations that are promising to be Mahonian on repetitive permutations. These found statistics extensions are:
\begin{itemize}
    \item four statistics extensions of $\mad$ and $\madl$ defined by Clarke, Steingr{\'i}mssonn and Zeng \cite{CLARKE1997237}
    \item two new $\mad$ extensions, namely $\mad_3$ and $\mad_4$
    \item two new $\madl$ extensions, namely $\madl_3$ and $\madl_4$
\end{itemize}
For proving 4 new statistics extensions are Mahonian, we construct an involution $\Phi$ on repetitive permutations which preserves the descents statistics and transforms $\mad_3$, $\mad_4$, $\madl_3$, $\madl_4$ to $\mad_1$, $\mad_2$, $\madl_1$, $\madl_2$, respectively.

This thesis consists of 3 main chapters:
\begin{itemize}
    \item Chapter 1 introduces Mahonian statistics $\mad$, $\madl$ on permutation and related terms
    \item Chapter 2 represents statistics in Clark's paper \cite{CLARKE1997237} as vincular patterns extensions $\mad_1$, $\mad_2$, $\madl_1$, and $\madl_2$. At the end of this chapter, we conjectures new Mahonian vincular patterns extensions $\mad_3$, $\mad_4$, $\madl_3$, and $\madl_4$.
    \item Chapter 3 shows a constructive bijection $\Phi$ on repetitive permutation such that:
    \begin{align*}
        (\des, \mad_3)w &= (\des, \mad_1) \Phi(w)\\
        (\des, \mad_4)w &= (\des, \mad_2) \Phi(w)\\
        (\des, \madl_3)w &= (\des, \madl_1) \Phi(w)\\
        (\des, \madl_4)w &= (\des, \madl_2) \Phi(w)
    \end{align*}
\end{itemize}

\newpage

\chapter{Mahonian statistics on permutation set}
\section{Mahonian statistics}
Before introducing the Mahonian statistics, we define some related definitions about major index and inversion.
\begin{Def}
An \textit{inversion} in a permutation $\pi$ is a pair of indices $(i, j)$ such that $i < j$ and $\pi_i > \pi_j$.\\
\textit{Inversion statistics}, denoted by $\inv$, is the number of inversions.
\end{Def}
\begin{Def}
A \textit{descent} in a permutation $\pi$ is an index $i$ such that $\pi_i > \pi_{i+1}$.\\
\textit{Major index}, denoted by $\maj$, is the sum of the indices of the descents.
\end{Def}
\begin{Def}
\textit{Mahonian} statistics is the \textit{statistics} that has the same \textit{distribution} as \textit{major index} statistics or \textit{inversion} statistics.
\end{Def}
Besides major index, many other Mahonian statistics have been identified. In this paper, we focus on two statistics $\mad$ and $\madl$, which have been proved to be Mahonian on permutations and permutations over mutiset. Follows are the definition of $\mad$, $\madl$ on permutation, and other relating statistics.
\begin{Def}
Let $i$ be a descent in a permutation $\pi$. The number $\pi_i$ is a \textit{descent top}, and $\pi_{i+1}$ is a \textit{descent bottom}. The \textit{descent bottoms sum}, denoted by $\Dbot$, of $\pi$ is the sum of the descent bottoms of $\pi$. Similarly, the \textit{descent tops sum}, denoted by $\Dtop$, of $\pi$ is the sum of the descent tops of $\pi$. The \textit{descent difference}, denoted by $\Ddif$, of $\pi$ is the difference of descent tops sum and descent bottoms sum.
\end{Def}
\begin{Def}
Let $\pi = \pi_1\pi_2\dots \pi_n$ be a permutation.\\
The \textit{(right) embracing numbers} of $\pi$ are the numbers $\r(\pi_1),\r(\pi_2),\dots,\r(\pi_n)$ where $\r(\pi_i)$ is the number of descent blocks in $\pi$ that are strictly to the right of $\pi_i$ and that embrace $\pi_i$.\\
The \textit{right embracing sum} of $\pi$, denoted by$\Res(\pi)$, is defined by
\begin{align*}
    \Res(\pi) = \r(\pi_1) + \r(\pi_2) + \dots + \r(\pi_n)
\end{align*}
By replacing "right" by "left" in the above definition, we define \textit{left embracing numbers}, \textit{left embracing sum} $\Les$ in an analogous way, 
\end{Def}

The definition of $\mad$ and $\madl$ was first introduced in \cite{CLARKE1996}
\begin{Def}
The $\mad$ statistics of permutation $\pi$ was defined as the sum of the right embracing sum and the descent difference in $\pi$.
\begin{align*}
    \mad(\pi) = \Res(\pi) + \Dbot(\pi)
\end{align*}
\end{Def}
\begin{Def}
The $\madl$ statistics of permutation $\pi$ was defined as the sum of the left embracing sum and the descent difference in $\pi$.
\begin{align*}
    \madl(\pi) = \Les(\pi) + \Dbot(\pi)
\end{align*}
\end{Def}
\section{Vincular Patterns}
In \cite{Babson2000}, Babson and Steingr{\'i}msson pointed out that many Mahonian statistics can be expressed as linear combinations of statistics counting occurrences of \textit{vincular patterns}. Follows are definitions related to vincular pattern.
\begin{Def}
The \textit{reduced form} of a permutation $\pi = \pi_1\pi_2\dots \pi_r$, denoted by $\red(\pi)$, obtained by replacing the $i^{th}$ smallest letter of $\pi$ by $i$, for $i = 1, 2, \dots, r$.
\end{Def}
\begin{Exam}
$\red(8516) = 4213$
\end{Exam}
\begin{Def}
A permutation $\pi = \pi_1\pi_2\dots \pi_n$ has an \textit{occurrence} of the \textit{classical pattern} $\tau \in \Sset_r$ if there exist $1 \leq i_1 < i_2 < · · · < i_r \leq n$ such that $\tau = \red(\pi_{i_1} \pi_{i_2}\dots \pi_{i_r})$.
\end{Def}
\begin{Exam}
The permutation $341562$ has seven occurrences of the pattern $231$, which are the subsequences $341$, $342$, $352$, $362$, $452$, $462$, and $562$.
\end{Exam}
\begin{Def}
A \textit{vincular pattern} of length $r$ is a generalization of the notion of a classical pattern that allows to present the adjacencies. It is a pair $(\tau; X)$ such that $\tau$ is a permutation in $\Sset_r$ and $X \subseteq \{0,1,\dots,r\}$ is a set of adjacencies. 

In this paper, we only consider about vincular patterns with at most 1 adjacency and the adjacency is neither $0$ nor $r$. If $X = \emptyset$, the vincular pattern is classic. If $X = \{j\}$ such that $j \in \overline{1,r-1}$, adjacency is indicated in pattern presence by underlining $\tau_j$ and $\tau_{j+1}$. A permutation $\pi = \pi_1\pi_2\dots \pi_n$ has an occurrence of the vincular pattern $(\tau,\{j\}$ in which $j \in \overline{1,r-1}$ if
\begin{itemize}
    \item there exist $1 \leq i_1 < i_2 < · · · < i_r \leq n$ such that $\tau = \red(\pi_{i_1} \pi_{i_2}\dots \pi_{i_r})$.
    \item $i_{j+1} = i_j + 1$
\end{itemize}
\end{Def}
\begin{Exam}
The permutation $341562$ has four occurrences of the pattern $2\underline{31}$, which are the subsequences $341$, $362$, $462$, and $562$.
\end{Exam}

\section{Natural extensions of statistics to words}\label{sec13}
\begin{Def}\label{defEx}
An \textit{extension of a vincular pattern} $(\tau,X)$ is the combination of the original vincular pattern with some of its "weaker" vincular patterns, which are obtained by replacing an entry in $\tau$ by its weak counterpart. More formally describe, let $W = \{( red(\tau_1\tau_2\dots(\tau_i-1)\tau_{i+1}\dots\tau_r) , X):i\in [r]\}$ be a set of "weaker" vincular patterns of $(\tau,X)$, in which $r$ is the length of $\tau$. An extension of $(\tau,X)$ is obtained by combining $(\tau,X)$ and some elements in $W$. An extension of a linear combination of vincular patterns $(\tau_1,X_1)+ (\tau_2,X_2) + \dots + (\tau_s,X_s)$ is the statistics obtained by extending some of patterns $(\tau,X)$.
\end{Def}
Many statistics on permutations can be expressed as a combination of vincular patterns. Some of these are Mahonian not only on permutations but also on words such as $\inv$, $\maj$ \cite{mahonWord}, $\stat$ \cite{vincular}, $\mad$, $\mak$, $\madl$, $\makl$ \cite{CLARKE1997237}, etc. With any statistics $S$ satisfying these two conditions, the vincular patterns representation of $S$'s statistics extension on words is an extension of $S$'s vincular patterns representation. Follows are vincular patterns combinations of some Mahonian statistics and its Mahonian extensions:
\begin{enumerate}
    \item A vincular patterns extension of $\inv$ on permutation $\pi$ is $$   \inv(\pi) = (\underline{23}1 + \underline{31}2 + \underline{32}1 + \underline{21}) \pi$$, and its standard vincular patterns extension on word defined by MacMahon \cite{mahonWord} which was proved to be Mahonian is $$ \inv(w) = (\underline{23}1 + \underline{31}2 + \underline{32}1 + \underline{21} + \underline{22}1 + \underline{21}1)w $$, in which
    \begin{itemize}
        \item $\underline{23}1$ extended as $\underline{23}1 + \underline{22}1$
        \item $\underline{31}2$ extended as $\underline{31}2 + \underline{21}1$
    \end{itemize}
    \item A vincular patterns extension of $\maj$ on permutation $\pi$ is $$   \maj(\pi) = (1\underline{32} + 2\underline{31} + 3\underline{21} + \underline{21}) \pi$$, and its standard vincular patterns extension on word defined by MacMahon \cite{mahonWord} which was proved to be Mahonian is $$ \maj(w) = (1\underline{32} + 2\underline{31} + 3\underline{21} + \underline{21} + 1\underline{21} + 2\underline{21})w $$, in which
    \begin{itemize}
        \item $1\underline{32}$ extended as $1\underline{32} + 1\underline{21}$
        \item $2\underline{31}$ extended as $2\underline{31} + 2\underline{21}$
    \end{itemize}
    \item In \cite{Babson2000}, the authors identified vincular patterns of $\mad$ and $\madl$ statistics on permutation:
    \begin{align*}
        \mad(w) &= (2. 2\underline{31}+ \underline{31}2 +\underline{21})w\\
        \madl(w) &= (\underline{31}2 + 2\underline{31} + \underline{31}2 + \underline{21})w\\
        \mak(w) &= (1\underline{32} + \underline{32}1 + \underline{21} + 2\underline{31})(w)\\
        \makl(w) &= (\underline{31}2 + \underline{32}1 + \underline{21} + 1\underline{32})(w)
    \end{align*}
    In the next chapter, we will define vincular patterns combinations of $\mak$, $\makl$, $\mad$ and $\madl$ extensions that were proved to be Mahonian in \cite{CLARKE1997237}
\end{enumerate}

\chapter{Statistics $\mad$ and $\madl$ on repetitive permutations set}\label{chap1}
\section{Statistics extensions of $\mad$ and $\madl$ by \cite{CLARKE1997237}}
Given a word $w$, in \cite{CLARKE1997237}, the authors studied statistics on the class of rearrangement of the word $R(w)$. These statistics of the word $w$ are defined as follows:
\begin{enumerate}
    \item $\mak_l(w)=\Dbot_l(w) + \Res(w)$
    \item $\mak_r(w)=\Dbot_r(w) + \Res(w)$
    \item $\mad_l(w) = \Ddif_l(w) + \Res(w)$
    \item $\mad_r(w)=\Ddif_r(w) + \Res(w)$
    \item $\makl_l(w)=\Dbot_l(w)+\Les(w)$
    \item $\makl_r(w)=\Dbot_r(w)+\Les(w)$
    \item $\madl_l(w)=\Ddif_l(w) + \Les(w)$
    \item $\madl_r(w) = \Ddif_r(w) + \Les(w)$
\end{enumerate}
in which:
\begin{enumerate}
    \item \textit{Height value} of $w_i$, denoted by $h(w_i)$, is the number of letters in $w$ that are strictly smaller than $w_i$ plus 1
    \item \textit{Left value} of $w_i$, denoted by $v_l(w_i)$, is the sum of $h(w_i)$ and $l(w_i)$, in which $l(w_i)$ is the number of letter to the left of $w_i$ and equal to $w_i$.
    \item \textit{Right value} of $w_i$, denoted by $v_r(w_i)$, is the sum of $h(w_i)$ and $r(w_i)$, in which $r(w_i)$ is the number of letter to the right of $w_i$ and equal to $w_i$.
    
    \item The \textit{(right) embracing numbers} of $w$ are the numbers $\r(w_i)$ where $e_i$ is the number of descent blocks in $w$ that are strictly to the right of $w_i$ and that embrace $w_i$.\\
    If $v$ is a letter of $w$, we say that a is embraced by B if $C(B) \geq a > O(B)$.
    
    The \textit{right embracing sum} of $w$ ($\Res$) is the sum of embracing numbers of $w$.
    \item The definitions about \textit{left embracing numbers} $\l(w_i)$ and \textit{left embracing sum} ($\Les$) can be defined by replacing "right" by "left" in the above definitions.
    \item \textit{Descent tops sum} of $w$, denoted by $\Dtop(w)$ is the sum of the height of descent tops of $w$.
    \item \textit{Descent bottoms sum}s of $w$, denoted by $\Dbot_l(w)$ and $\Dbot_r(w)$ are the sum of the left values and the sum of the right values of descent bottoms of $w$.
    \item \textit{Descent difference}s of $w$, denoted by $\Ddif_l(w)$ and $\Ddif_r(w)$ are defined by
    \begin{align*}
        \Ddif_l(w) = \Dtop(w) - \Dbot_l(w)\\
        \Ddif_r(w) = \Dtop(w) - \Dbot_r(w)
    \end{align*}
\end{enumerate}
\section{Representations of extended statistics in term of vincular patterns}
Next, we show that we can represent the statistics mentioned above as a linear combination of vincular patterns.
\begin{Lem}\label{lma30}
Follows are the vincular patterns extensions of statistics in \cite{CLARKE1997237}:
\begin{enumerate}
\item $\Res(w)= (2\underline{31} + 2\underline{21})(w)$; 
\item $\Les(w) = (\underline{31} 2 + \underline{21}2) (w)$ 
\item $\sum_{w_i\in \DBOT (w)} l(w_i)= 1\underline{21} (w)$; 
\item $\sum_{w_i\in \DBOT (w)} r(w_i)= \underline{21}1 (w)$; 
\item $\sum_{w_i\in \DBOT (w)} h(w_i) = (1\underline{32} + \underline{32}1 + \underline{21})(w)$; 
\item $\Dtop(w)=\sum_{w_i\in \DTOP (w)} h(w_i) = (1\underline{32} + \underline{32}1 + \underline{31}2 + 2\underline{31} + 1\underline{21} + \underline{21}1 + 2.\underline{21})(w)$\\
\item Consequently,
\begin{align*}
    \Dbot_l(w) =& (1\underline{21}+1\underline{32}+\underline{32}1+\underline{21})(w)\\
    \Ddif_l(w) =& (\underline{31}2 + 2\underline{31} + \underline{21} + \underline{21}1) (w)\\
    \Dbot_r(w) =&  (1\underline{32} + \underline{32}1 + \underline{21} + \underline{21}1)(w)\\
    \Ddif_r(w) =& (\underline{31}2 + 2\underline{31} + \underline{21} + 1\underline{21})(w)\\
    \mak_l(w) =& (1\underline{21} + 1\underline{32} + \underline{32}1 + \underline{21} + 2\underline{31} + 2\underline{21})(w)\\
    \mad_l(w) =& (\underline{21}1 + \underline{31}2 + \underline{21} + 2\underline{31} + 2\underline{31} + 2\underline{21})(w)\\
    \makl_l(w) =& (1\underline{21} + 1\underline{32} + \underline{32}1 +\underline{21} + \underline{31} 2 +\underline{21}2)(w) \\
    \madl_l(w) =& (\underline{31}2 + 2\underline{31} + \underline{21} + \underline{21}1 + \underline{31} 2 +\underline{21}2)(w)\\
    \mak_r(w) =& (1\underline{32} + \underline{32}1 + \underline{21} + \underline{21}1 + 2\underline{31} + 2\underline{21})(w) \\
    \mad_r(w) =& (\underline{31}2 + 2\underline{31} + \underline{21} + 1\underline{21} + 2\underline{31} + 2\underline{21})(w)\\
    \makl_r(w) =& (1\underline{32} + \underline{32}1 + \underline{21} + \underline{21}1 + \underline{31} 2 + \underline{21}2)(w)\\
    \madl_r(w) =& (\underline{31}2 + 2\underline{31} + \underline{21} + 1\underline{21} + \underline{31} 2 + \underline{21}2)(w)
\end{align*}
\end{enumerate}
\end{Lem}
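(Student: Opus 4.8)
The plan is to prove the six building blocks (1)--(6) by direct enumeration and then obtain the fourteen formulas in (7) by purely formal manipulation. For (7) one substitutes: $\Dbot_l$ is the sum over descent bottoms of the left value $h(w_i)+l(w_i)$, so $\Dbot_l=(5)+(3)$, and likewise $\Dbot_r=(5)+(4)$; then $\Ddif_l=\Dtop-\Dbot_l$ and $\Ddif_r=\Dtop-\Dbot_r$ using (6); finally $\mak_\bullet=\Dbot_\bullet+\Res$, $\mad_\bullet=\Ddif_\bullet+\Res$, $\makl_\bullet=\Dbot_\bullet+\Les$, $\madl_\bullet=\Ddif_\bullet+\Les$ with (1) and (2) plugged in. Each subtraction must be checked to leave non-negative coefficients, which it does because every vincular pattern occurring in $\Dbot_l$ or $\Dbot_r$ already occurs in $\Dtop$; this is pure bookkeeping and introduces nothing new, so all the real work is in (1)--(6).

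For (1) and (2) the key device is to collapse each descent block to the consecutive descents it contains. If a descent block $B$ occupies consecutive positions $j<j+1<\cdots<k$, its letters strictly decrease, so the half-open intervals $(w_{j+1},w_j],(w_{j+2},w_{j+1}],\dots,(w_k,w_{k-1}]$ partition $(O(B),C(B)]$, where $O(B)$ and $C(B)$ are the smallest and largest letters of $B$. Hence a letter $w_i$ is embraced by $B$, i.e.\ $O(B)<w_i\le C(B)$, precisely when there is a \emph{unique} index $t$ with $j\le t<k$ and $w_{t+1}<w_i\le w_t$. Consequently $\r(w_i)$ equals the number of adjacent descending pairs $w_tw_{t+1}$ with $i<t$ and $w_{t+1}<w_i\le w_t$ (the block through such a pair being automatically strictly to the right of $w_i$), so $\Res(w)=\sum_i\r(w_i)$ becomes a count of triples $(w_i;w_t,w_{t+1})$; splitting on $w_i<w_t$ versus $w_i=w_t$ yields the two patterns $2\underline{31}$ and $2\underline{21}$, and the mirror-image argument for left-embracing gives $\underline{31}2$ and $\underline{21}2$. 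The step I expect to be the \emph{main obstacle} is verifying that this correspondence is genuinely a bijection onto pattern occurrences: starting from an occurrence of, say, $2\underline{31}$ one must show the descent block through its marked adjacent pair lies strictly to the right of the third (free) letter, which uses both the maximality of blocks and their strict monotonicity to forbid the free letter from sitting inside that block.

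Identities (3)--(6) are proved by expanding the weights $h(w_j)=1+\#\{s:w_s<w_j\}$, $l(w_j)=\#\{s<j:w_s=w_j\}$ and $r(w_j)=\#\{s>j:w_s=w_j\}$. Summing any of these over the descent bottoms of $w$ (one term per descent $w_{j-1}>w_j$), or summing $h$ over the descent tops (one term per descent $w_i>w_{i+1}$), the constant ``$1$'' contributes $\#(\underline{21})(w)$, and what remains is a count of triples consisting of a marked descent and a free letter $w_s$. I would then split this count by the position of $w_s$ relative to the descent --- before it, at the partner position of the descent (excluded in the equality sums, since $w_{j-1}>w_j$ forbids $w_{j-1}=w_j$), or after it --- and by how $w_s$ compares with the descent top and bottom; collecting cases reproduces exactly the listed patterns, the repeated-digit reduced forms ($1\underline{21}$, $\underline{21}1$, $1\underline{32}$, $\underline{32}1$, $2\underline{21}$) coming precisely from the equality cases, and in (6) one extra copy of $\underline{21}$ appears because the descent bottom $w_{i+1}$ always satisfies $w_{i+1}<w_i$, which is what produces the coefficient $2$ on $\underline{21}$. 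Apart from careful casework the only subtlety here is the same strict-monotonicity point as in the previous paragraph, so that is where I would concentrate the write-up.
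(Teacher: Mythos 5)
Your proposal is correct and follows essentially the same route as the paper's proof: direct case-by-case enumeration of the triples counted by (1)--(6), followed by formal addition and subtraction for (7). The one genuine addition is your interval-partition argument $(O(B),C(B)]=\bigcup_t(w_{t+1},w_t]$ justifying that each embracing block contributes exactly one adjacent descent pair --- a step the paper simply asserts when rewriting $\r(w_i)$ as a count of triples $(i,j,j+1)$.
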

\begin{proof}
\begin{enumerate}
\item $\Res(w)= (2\underline{31} + 2\underline{21})(w)$;
\begin{multline*}
    \r(w_i) = \mid\{(i,j,j+1): j > i, w_{j} > w_i > w_{j+1}, j=\overline{1,n-1}\}\mid\\
    + \mid\{(i,j,j+1): j > i, w_{j} = w_i > w_{j+1},j=\overline{1,n-1}\}\mid
\end{multline*}
\begin{align*}
    Res(w) &= \Sigma_{\text{i from 1 to n}}\r(w_i)\\
    &=\Sigma_{\text{i from 1 to n}}\mid\{(i,j,j+1): j > i, w_{j} > w_i > w_{j+1},j=\overline{1,n-1}\}\mid \\
    & \qquad \qquad + \Sigma_{\text{i from 1 to n}}\mid\{(i,j,j+1): j > i, w_{j} = w_i > w_{j+1},j=\overline{1,n-1}\}\mid\\
    &= (2\underline{31} + 2\underline{21})(w)
\end{align*}
\item $\Les (w) = (\underline{31} 2 +\underline{21}) (w)$; 
\begin{multline*}
    \l(w_i) = \mid\{(j,j+1,i): j + 1 < i, w_{j} > w_i > w_{j+1}, j=\overline{1,n-1}\}\mid \\
    + \mid\{(j,j+1,i): j+1<i, w_{j} = w_i > w_{j+1},j=\overline{1,n-1}\}\mid
\end{multline*}
\begin{align*}
    Res(w) &= \Sigma_{\text{i from 1 to n}}\r(w_i)\\
    &=\Sigma_{\text{i from 1 to n}}\mid\{(j,j+1,i): j+1 < i, w_{j} > w_i > w_{j+1},j=\overline{1,n-1}\}\mid \\
    & \qquad \qquad + \Sigma_{\text{i from 1 to n}}\mid\{(j,j+1,i): j+1 < i, w_{j} = w_i > w_{j+1},j=\overline{1,n-1}\}\mid\\
    &= (\underline{31}2 + \underline{21}2)(w)
\end{align*}
\item $\sum_{w_i\in \DBOT (w)} l(w_i)= 1\underline{21} (w)$
\begin{align*}
    l(w_i) &= \mid \{\text{letter to the left of $w_i$ and equal to $w_i$}\}\mid \\
    &= \mid \{(j,i): j < i, w_j = w_i, j=\overline{1,n}\} \mid
\end{align*}
\begin{align*}
    \Sigma_{w_i\in \DBOT}l(w_i) &= \Sigma_{w_i\in \DBOT}\mid \{(j,i): j < i, w_j = w_i, j=\overline{1,n}\} \mid\\
    &= \mid \{(j,i-1,i): j < i-1, w_j = w_i < w_{i-1}, j=\overline{1,n}, i=\overline{1,n}\} \mid\\
    &= (1\underline{21})(w)
\end{align*}
\item $\sum_{w_i\in \DBOT (w)} r(w_i)= \underline{21}1 (w)$
\begin{align*}
    r(w_i) &= \mid \{\text{letter to the right of $w_i$ and equal to $w_i$}\}\mid \\
    &= \mid \{(i,j): j > i, w_i = w_j, j=\overline{1,n}\} \mid
\end{align*}
\begin{align*}
    \Sigma_{w_i\in \DBOT}l(w_i) &= \Sigma_{w_i\in \DBOT}\mid \{(i,j): i < j, w_i = w_j, j=\overline{1,n}\} \mid\\
    &= \mid \{(i-1,i,j): i < j, w_j = w_i < w_{i-1}, j=\overline{1,n}, i=\overline{1,n}\} \mid\\
    &= (\underline{21}1)(w)
\end{align*}
\item $\sum_{w_i\in \DBOT (w)} h(w_i) = (1\underline{32}+ \underline{32}1+\underline{21})(w)$;
\begin{align*}
    h(w_i) &= \mid\{\text{(letters in $w$ that are strictly smaller than $w_i$)}\}\mid + 1\\
    &= \mid \{(i,j): w_i > w_j, j=\overline{1,n}\} \mid + 1\\
    &= \mid \{(i,j): w_i > w_j, j=\overline{1,n}\} \mid + \mid \{(i)\} \mid
\end{align*}
\begin{align*}
    \sum_{w_i\in \DBOT (w)} h(w_i) &= \sum_{w_i\in \DBOT (w)} (\mid \{(i,j): w_i > w_j, j=\overline{1,n}\} \mid + \mid \{(i)\}\mid)\\
    &= \mid \{(i-1,i,j): w_{i-1} > w_i > w_j, i=\overline{1,n}, j=\overline{1,n}\} \mid \\
    & \qquad \qquad + \mid \{(i-1,i): w_{i-1} > w_i, i=\overline{1,n}\}\mid\\
    &= \mid \{(i-1,i,j): i < j, w_{i-1} > w_i > w_j, i=\overline{1,n}, j=\overline{1,n}\} \mid \\
    & \qquad \qquad + \mid \{(i-1,i,j): i-1 > j, w_{i-1} > w_i > w_j, i=\overline{1,n}, j=\overline{1,n}\} \mid\\
    & \qquad \qquad + \mid \{(i-1,i): w_{i-1} > w_i, i=\overline{1,n}\}\mid\\
    &= (\underline{32}1 + 1\underline{32} + \underline{21})(w)
\end{align*}
\item $\Dtop (w)=\sum_{w_i\in \DTOP (w)} h(w_i) = (1\underline{32}+ \underline{32}1+\underline{31}2+2\underline{31}+2.\underline{21})(w)+1\underline{21}+\underline{21}1)(w)$;
\begin{align*}
    \sum_{w_i\in \DTOP (w)} h(w_i) &= \sum_{w_i\in \DTOP (w)} (\mid \{(i,j): w_i > w_j, j=\overline{1,n}\} \mid + \mid \{(i)\}\mid)\\
    &= \mid \{(i,i+1,j): w_i > w_{i+1}, w_i > w_j, i=\overline{1,n}, j=\overline{1,n}\} \mid \\
    & \qquad \qquad + \mid \{(i,i+1): w_i > w_{i+1}, i=\overline{1,n}\}\mid\\
    &= \mid \{(i,i+1,j): w_i > w_j > w_{i+1}, i=\overline{1,n}, j=\overline{1,n}\} \mid \\
    & \qquad \qquad + \mid \{(i,i+1,j): w_i > w_{i+1} > w_j, i=\overline{1,n}, j=\overline{1,n}\} \mid \\
    & \qquad \qquad + \mid \{(i,i+1,j): i+1 = j, w_i > w_{i+1} = w_j, i=\overline{1,n}, j=\overline{1,n}\} \mid \\
    & \qquad \qquad + \mid \{(i,i+1,j): i+1\neq j, w_i > w_{i+1} = w_j, i=\overline{1,n}, j=\overline{1,n}\} \mid \\
    & \qquad \qquad + \mid \{(i-1,i): w_{i-1} > w_i, i=\overline{1,n}\}\mid\\
    &= \mid \{(i,i+1,j): j > i+1, w_i > w_j > w_{i+1}, i=\overline{1,n}, j=\overline{1,n}\} \mid \\
    & \qquad \qquad +  \mid \{(i,i+1,j): j < i, w_i > w_j > w_{i+1}, i=\overline{1,n}, j=\overline{1,n}\} \mid \\
    & \qquad \qquad + \mid \{(i,i+1,j): j > i+1, w_i > w_{i+1} > w_j, i=\overline{1,n}, j=\overline{1,n}\} \mid \\
    & \qquad \qquad + \mid \{(i,i+1,j): j < i, w_i > w_{i+1} > w_j, i=\overline{1,n}, j=\overline{1,n}\} \mid \\
    & \qquad \qquad + \mid \{(i,i+1,j): i+1 = j, w_i > w_{i+1} = w_j, i=\overline{1,n}, j=\overline{1,n}\} \mid \\
    & \qquad \qquad + \mid \{(i,i+1,j): w_i > w_{i+1} = w_j, i=\overline{1,n}, j=\overline{1,n}\} \mid \\
    & \qquad \qquad + \mid \{(i-1,i): j > i+1, w_{i-1} > w_i, i=\overline{1,n}\}\mid\\
    & \qquad \qquad + \mid \{(i-1,i): j < i, w_{i-1} > w_i, i=\overline{1,n}\}\mid\\
    &= (\underline{31}2 + 2\underline{31} + \underline{32}1 + 1\underline{32} + \underline{21} + \underline{21}2 + 2\underline{21} +  \underline{21})(w)
\end{align*}
\item Consequently,
$$\Dbot_l(w) = \Sigma_{w_i\in \DBOT}(l(w_i) + h(w_i)) = ((1\underline{21}) + (1\underline{32}) + (\underline{32}1) + (\underline{21}) )(w)$$
$$\Ddif_l(w) = \Dtop(w) - \Dbot_l(w) = (\underline{21}1) + (\underline{31}2) + (\underline{21}) + (2\underline{31})$$
$$\Dbot_r(w) = \sum_{w_i\in\Dbot(w)}(h(w_i) + r(w_i)) = (1\underline{32} + \underline{32}1 + \underline{21} + \underline{21}1)(w)$$
$$\Ddif_r(w) = \Dtop(w) - \Dbot_r(w) = \underline{31}2 + 2\underline{31} + \underline{21} + 1\underline{21}$$

$$\mak_l(w) = \Dbot_l(w) + \Res(w) = (1\underline{21} + 1\underline{32} + \underline{32}1 + \underline{21} + 2\underline{31} + 2\underline{21})(w)$$
$$\mad_l(w) = \Ddif_l(w) + \Res(w) = (\underline{21}1 + \underline{31}2 + \underline{21} + 2\underline{31} + 2\underline{31} + 2\underline{21})(w)$$
$$\makl_l(w) = \Dbot_l(w) + \Les(w) = (1\underline{21} + 1\underline{32} + \underline{32}1 +\underline{21} + \underline{31} 2 +\underline{21}2)(w) $$
$$\madl_l(w) = \Ddif_l(w) + \Les(w) = (\underline{31}2 + 2\underline{31} + \underline{21} + \underline{21}1 + \underline{31} 2 +\underline{21}2)(w)$$

$$\mak_r(w) = \Dbot_r(w) + \Res(w) =(1\underline{32} + \underline{32}1 + \underline{21} + \underline{21}1 + 2\underline{31} + 2\underline{21})(w) $$
$$\mad_r(w) = \Ddif_r(w) + \Res(w) =(\underline{31}2 + 2\underline{31} + \underline{21} + 1\underline{21} + 2\underline{31} + 2\underline{21})(w)$$
$$\makl_r(w) = \Dbot_r(w) + \Les(w) = (1\underline{32} + \underline{32}1 + \underline{21} + \underline{21}1 + \underline{31} 2 + \underline{21}2)(w)$$
$$\madl_r(w) = \Ddif_r(w) + \Les(w) = (\underline{31}2 + 2\underline{31} + \underline{21} + 1\underline{21} + \underline{31} 2 + \underline{21}2)(w)$$
\end{enumerate}
\end{proof}
\section{Other statistics extensions}
Follow is a method we use to obtain the list of vincular patterns combinations that have Mahonity on words:
\begin{itemize}
    \item Follows the definition \cref{defEx}, from a given combination of vincular patterns, we can generate a list of all possible 6-vincular-patterns combinations.
    \item By doing the experimental test with all combinations in that list, we eliminate combinations that cannot be Mahonian on words.
\end{itemize}
Applying above method for vincular patterns combinations of $\mak$, $\makl$, $\mad$ and $\madl$ mentioned in \cref{sec13} gives us the two first column of \cref{tab1}. Some records in \cref{tab1} was proved to be Mahonian on repetitive permutations set by Clarke, Steingr{\'i}mssonn and Zeng. It seems like their works, so far, are the only statistics extensions on repetitive permutations for $\mad$ and $\madl$. The statistics extensions $\mad_3$, $\mad_4$, $\madl_3$, and $\madl_4$ have not been defined elsewhere. 
\begin{table}[]
    \centering
    \begin{tabular}{|c|c|c|}
\hline
Statistic& Extensions & Notes\\
\hline
$\mak$     & $\mak_1 = (1\underline{21} + 1\underline{32} + \underline{32}1 + \underline{21} + 2\underline{31} + 2\underline{21})$ & Clarke, Steingr{\'i}mssonn and Zeng\\
\hline
$\mak$     & $\mak_2 = (\underline{21}1 + 1\underline{32} + \underline{32}1 + \underline{21} + 2\underline{31} + 2\underline{21})$ & Clarke, Steingr{\'i}mssonn and Zeng\\
\hline
$\mad$ & $\mad_1 = 2. 2\underline{31}+ \underline{31}2 +\underline{21}1+2\underline{21}+\underline{21}$& Clarke, Steingr{\'i}mssonn and Zeng\\
\hline 
$\mad$ & $\mad_2 = 2. 2\underline{31}+ \underline{31}2 +1\underline{21}+2\underline{21}+\underline{21}$ & Clarke, Steingr{\'i}mssonn and Zeng\\
\hline 
$\mad$ & $\mad_3 = 2. 2\underline{31}+ \underline{31}2 +\underline{21}2+\underline{21}1+\underline{21}$ & new\\
\hline 
$\mad$ & $\mad_4 = 2. 2\underline{31}+ \underline{31}2 +\underline{21}2+1\underline{21}+\underline{21}$ & new\\
\hline 
$\makl$ & $\makl_1 = \underline{31}2+ \underline{32}1 +1\underline{32}+\underline{21}1+\underline{21}2+\underline{21}$ & Clarke, Steingr{\'i}mssonn and Zeng\\
\hline 
$\makl$ & $\makl_2 = \underline{31}2+ \underline{32}1 +1\underline{32}+1\underline{21}+\underline{21}2+\underline{21}$ & Clarke, Steingr{\'i}mssonn and Zeng\\
\hline
$\madl$ & $\madl_1 = \underline{31}2 + 2\underline{31} + \underline{31}2 + \underline{21} + 1\underline{21} + \underline{21}2$ & Clarke, Steingr{\'i}mssonn and Zeng\\
\hline
$\madl$ & $\madl_2 = \underline{31}2 + 2\underline{31} + \underline{31}2 + \underline{21} + \underline{21}2 + \underline{21}1$ & Clarke, Steingr{\'i}mssonn and Zeng\\
\hline 
$\madl$ & $\madl_3 = \underline{31}2 + 2\underline{31} + \underline{31}2 + \underline{21} + 1\underline{21} + 2\underline{21}$ & new\\
\hline
$\madl$ & $\madl_4 = \underline{31}2 + 2\underline{31} + \underline{31}2 + \underline{21} + 2\underline{21} + \underline{21}1$ & new\\
\hline
\end{tabular}
\caption{Statistic extensions of $\mad$, $\madl$, $\mak$, and $\makl$}
\label{tab1}
\end{table}

The following chapters will prove that the new statistics extensions are Mahonian.

\chapter{Another vincular patterns extensions of $\mad$ and $\madl$}
In this chapter, we construct and prove an involution $\Phi$ such that:
\begin{align*}
    \mad_3 w &= \mad_1 \Phi(w)\\
    \mad_4 w &= \mad_2 \Phi(w)\\
    \madl_3 w &= \madl_1 \Phi(w)\\
    \madl_4 w &= \madl_2 \Phi(w)
\end{align*}
In \cite{CLARKE1997237}, the authors studied $\mad_1$, $\mad_2$, $\madl_1$, and $\madl_2$ on the rearrangement class of the word $R(w)$, that is the set of all words that can be obtained by permuting the letters of $w$, in which $w$ is the word on the totally ordered alphabet $A$. Since we map $\mad_3$, $\mad_4$, $\madl_3$, and $\madl_4$ to their works,  the scope for studying the new $\mad$ and $\madl$ extensions will be also on $R(w)$.

We consider a multiset $M = (A,m)$ such that with every order letter $a$ in $A$, the multiplicity of $a$ in $M$ equals to the number of occurrences of $a$ in $w$. It is clear that $w$ is a permutation on the multiset $M$, and $R(w)$ is a set of permutations over the multiset $M$. Studying statistics on permutations over the multiset $M$ equals studying statistics on the rearrangement class of the word $R(w)$.

Bellow is the formal definition of the multiset.
\begin{Def}
A \textit{multiset} is formally defined as a 2-tuple $(A, m)$ where 
\begin{itemize}
    \item $A$ is the \textit{underlying set} of the multiset, formed from its distinct elements
    \item ${\displaystyle m\colon A\to \mathbb {Z} ^{+}}$ is a function from $A$ to the set of the positive integers, giving the \textit{multiplicity}, that is, the number of occurrences, of the element $a$ in the multiset\\
    The multiplicity of $a$ in the multiset denote by $m(a)$
\end{itemize}
\end{Def}
We denote the word $\Sset_{(A,m)}$ or $\Sset_{(A,m)}$ is the set of all permutations over $M=(A,m)$ multiset. It is clear that $\Sset_{(A,m)} = \Sset_{(A,m)} = R(w)$. 

This chapter contains three sections. The first section defines all the terms and annotations used throughput the chapter. The second section presents the function $\Phi$ and shows it is a bijection. The last section shows the relation between the occurrences of the vincular pattern counting of $w$ and the $\Phi(w)$ one.

\pagebreak
\section{Basic concepts and notations}
This section covers definitions that will be used throughput this chapter. Some terms in this chapter are defined differently to their definitions in \cref{chap1}.
\begin{Def}\label{def0}
Give a word $w=w_1w_2\dots w_n$ in $\Sset_{(A,m)}$. The \textit{value} of $w_i$, denoted by $\v(w_i)$, is the letter in $A$ presenting for $w_i$
\end{Def}

\begin{Def}\label{def1}
Give a word $w=w_1w_2\dots w_n$ in $\Sset_{(A,m)}$. A \textit{descent} in $w$ is a triple $(i,w_i,w_{i+1})$ such that $i \in [n-1]$ and $w_i > w_{i+1}$. $w_i$ is called \textit{descent top}, and $w_{i+1}$ is called \textit{descent bottom}.

The number of occurrences of descent in word $w$ is counted by $(\underline{21})w$.
\end{Def}
\begin{Def}\label{def3}
Given a word $w = w_1w_2\dots w_n$, we separate $w$ into its descent blocks by putting in dashes between $w_i$ and $w_{i+1}$ whenever $w_i \leq w_{i+1}$. A \textit{descent block} is a maximal continuous sub-word of $w$ which lies between two dashes. A descent block is an \textit{outsider} if it has only one letter; otherwise, it is a proper descent block. The leftmost letter of a proper descent block is its \textit{closer} and the rightmost letter is its \textit{opener}. A letter which lies strictly inside a descent block is an \textit{insider}.

The position of $w_i$ is denoted by $\p(w_i)$. The opener and the closer of block $B$ is denoted by $O(B)$ and $C(B)$, respectively.
\end{Def}
\begin{Rem}\label{rem9}
It is clear that insiders are descent tops and descent bottoms; outsiders are non-descent-tops and non-descent bottoms; openers are descent bottoms and non-descent tops; closers are descent tops and non-descent bottoms.
\end{Rem}

\begin{Def}\label{def2}
The \textit{embraced} definition here is different to the one in \cite{CLARKE1997237}. Let $B$ be a proper descent block of $w$. If $a$ is a letter of $w$, we say that $a$ is embraced by $B$ if $O(B) < a < C(B)$.

The \textit{right embracing number} of the word $w=w_1w_2\dots w_n$ are the numbers $\r(w_1)$, $\r(w_2)$, $\dots$, $\r(w_n)$, where $\r(w_i)$ is the number of descent blocks in $w$ that are strictly to the right of $w_i$ and that embrace $w_i$. The \textit{right embracing sum} of $w$ is the sum of all right embracing numbers of $w$.

Similarly, the \textit{left embracing number} of the word $w=w_1w_2\dots w_n$ are the numbers $\l(w_1)$, $\l(w_2)$, $\dots$, $\l(w_n)$, where $\l(w_i)$ is the number of descent blocks in $w$ that are strictly to the left of $w_i$ and that embrace $w_i$. The \textit{left embracing sum} of $w$ is the sum of all left embracing numbers of $w$

The right embracing sum and the left embracing sum of $w$ word $w$ is counted by $(2\underline{31})w$ and $(\underline{31}2)w$, respectively.
\end{Def}
\begin{Exam}
Given $w=3-61-7-84$. The letter $w_4$ has:
\begin{itemize}
    \item value: $\v(w_4) = 7$
    \item position: $\p(w_4) = $outsider
    \item right embracing number: $\r(w_4) = 1$
    \item left embracing number: $\l(w_4) = 0$
\end{itemize}
\end{Exam}

\begin{Rem}\label{rem4}
From the way we define right embracing number and left embracing number, it is clear that
\begin{align}
    \Res(w) = (2\underline{31})w\\
    \Les(w) = (\underline{31}2)w
\end{align}
\end{Rem}

\begin{Def}\label{index}
Given a word $w=w_1w_2\dots w_n$. With letters that have the same value, we shall enumerate them from left to right. These enumerated indices are called \textit{duplicate index}. The duplicate index of $w_i$ is denoted by $\d(w_i)$.
\end{Def}

\begin{Def}
We define a new annotation $((\tau,X))\vert _{\tau_i = v}$. Given a word $w$ and the vincular pattern $(\tau,X)$. The $((\tau,X))\vert _{\tau_i = v} w$ notion presents the number of occurrences of pattern $(\tau,X)$ in $w$ such that in each occurrence, the entry presents for the $\tau_i$ is $v$.

We define this annotation more formal: Given a word $w$ and the vincular pattern $(\tau,X)$. Let $I = \{j:\tau_j = \tau_i\}$. The $((\tau,X))\vert _{\tau_i = v} w$ notion presents the number of occurrences of pattern $(\tau,X)$ in $w$ such that the occurrence $\omega$ has $\omega(j) = v \forall j \in I$.
\end{Def}
\begin{Exam}
Given a word $w=w_1\dots w_n$. Annotation $\underline{31}2\vert _{3 = 5}$ presents the tuple $(i,j)$ such that $w_i > w_j > w_{i+1}$ and $w_i = 5$. $(\underline{31}2)\vert _{3 = 5}w$ is the number of these occurrences in $w$. For example, if $w = 65253341$, the multiset of occurrences of $\underline{31}2\vert _{3 = 5}$ is $\{\underline{52}3,\underline{52}3,\underline{52}4,\underline{53}4\}$. Hence $(\underline{31}2)\vert _{3 = 5}w = 4$.

Consider another example with $\underline{21}2\vert _{2 = 4}$. This notion presents the tuple $(i,j)$ such that $w_i > w_{i+1}$ and $w_i = w_j = 4$. If $w = 43214414$, the multiset of occurrences of $\underline{21}2\vert _{2 = 4}$ is $\{\underline{43}4,\underline{43}4,\underline{43}4,\underline{41}4\}$. Hence $(\underline{21}2)\vert _{2 = 4}w = 4$.
\end{Exam}

\vspace{5mm}
\noindent
From \cref{def4} to \cref{def7}, we define annotations relate to \textbf{4-tuple-letter}
\begin{Def}\label{def4}
A \textit{4-tuple-letter} is defined as
\begin{align*}
    (\v,\d,\p,\r): \mathbb{N} \times \mathbb{N}^{\*} \times \{\text{opener},\text{closer},\text{insider},\text{outsider}\} \times \mathbb{N}
\end{align*}

Given a word $w=w_1w_2\dots w_n$ in $\Sset_{(A,m)}$. For all letter $w_i \in w$, its 4-tuple-letter $W_i$ is $(\v(w_i),\d(w_i),\p(w_i),\r(w_i))$.

The value, duplicate index, position, and right embracing number of a 4-tuple-letter $W_i$ is the first, second, third, and fourth entry of tuple $W_i$, respectively. The annotation $\v,\d,\p,\r$ can also be used for denoting corresponding concepts of the 4-tuple-letter.
\end{Def}
\begin{Def}
The \textit{set of 4-tuple-letter} of $w$ is the set of all 4-tuple-letter of letter in $w$
\end{Def}

\begin{Def}
Given a tuple of 4-tuple-letter $T = (W_1,\dots ,W_n)$. The \textit{tuple of non-descent-top status} of $T$ is $(x_1,\dots ,x_n)$, in which $x_i = 1$ if and only if $W_i$ is the non descent top, otherwise $x_i = 0$. The \textit{tuple of right embracing number} of $T$ is $(\r(W_1),\dots ,\r(W_n))$, in which $\r(W_i)$ is the right embracing number of $W_i$
\end{Def}

\begin{Rem}\label{rem2}
Based on the way we define $W_i$, it is clear that with the specific $i_0$, the tuple $(\v(W_{i_0}),\d(W_{i_0}))$ is unique in the set $\{(\v(W_i),\d(W_i)): W_i \in W\}$. For that reason, we can use annotation $W_{(\v(W_i),\d(W_i))}$ for presenting the $W_i$ block in $W$ or for the letter $w_i$ in word $w$
\end{Rem}
\begin{Rem}\label{rem3}
With $W_{(v,i)}$ and $W_{(v,j)}$ in 4-tuple-letter set $W$ of the word $w$, if $i<j$, then $\r(W_{(v,i)}) \geq \r(W_{(v,j)})$
\end{Rem}
\begin{Exam}\label{ex1}
Given a word $w=421-4-43$. Below is the table for value, duplicate index, position, and right embracing number of all letters of $w$.
\[\begin{matrix}
    w(i) & 4_1 & 3_1 & 1_1 & 4_2 & 4_3 & 2_1 \\
    & W_1 & W_2 & W_3 & W_4 & W_5 & W_6 \\
    \v   & 4 & 3 & 1 & 4 & 4 & 2\\
    \d & 1 & 1 & 1 & 2 & 3 & 1\\
   \p & \text{closer} & \text{insider} & \text{opener} & \text{outsider} & \text{closer} & \text{opener}\\
   \r & 0 & 1 & 0 & 0 & 0 & 0
\end{matrix}
\]
The 4-tuple-letter set of $w$ is \begin{multline*}
    W = \{(4,1,\close,0), (3,1,\insi,1), (1,1,\open,0), (4,2,\out,0), \\(4,3,\close,0), (2,1,\open,0)\}
\end{multline*}
\end{Exam}
\begin{Def}\label{def5}
We denote $L_{\Sset_{(A,m)}} = \{\text{4-tuple-letter set of } w: w \in \Sset_{(A,m)}\}$ is the set of all 4-tuple-letter set of multiset permutation in $\Sset_{(A,m)}$. A 4-tuple-letter set $W$ is a \textit{consistent 4-tuple-letter set} if exists a multiset $(W^{\prime},m^{\prime})$ such that $W \in L_{\Sset_{(W^{\prime},m^{\prime})}}$
\end{Def}
\begin{Def}\label{def6}
Given a 4-tuple-letter set $W=\{W_1, W_2, \dots, W_k\}$. We say $W$ is a \textit{$v$-consistent 4-tuple-letter set} when:
\begin{itemize}
    \item $\forall W_i \in W$, $\v(W_i) = v$
    \item $\forall j\in [k]$, $\exists W_i\in W:\d(W_i)=j$
    \item if $\d(W_i)<\d(W_j)$ then $\r(W_i) \leq \r(W_j)$
\end{itemize}
\end{Def}
\begin{Def}\label{def7}
Given a 4-tuple-letter set $W=\{W_1, W_2, \dots, W_k\}$. A \textit{multiset of descent top} of $W$ is $\Destop(W) = \{\v(W_i) : \p(W_i) = \text{insider} \text{ or} \text{ closer}\}$. It is obvious that if $W$ is the set of 4-tuple-letter of $w$, $\Destop(W)$ is also the multiset of descent top of $w$. Respectively, we define a \textit{multiset of descent bottom} of $W$ is $\Desbot(W) = \{\v(W_i) : \p(W_i) = \text{insider} \text{ or} \text{ opener}\}$.
\begin{Exam}
Consider $W$ as result from the example 1, then $\Destop(W) = \{5,4,4,5\}$
\end{Exam}
\end{Def}
\vspace{5mm}
In the \cite{CLARKE1997237}, the author introduced the term $v$-skeleton and how to apply it in recovering the permutation from letters with known position and right embracing number. We extend their solutions into $v_i$-insertion to recover the word $w$ from the set of 4-tuple-letter.

From \cref{def8} to \cref{def13}, we define annotations relate to \textbf{$n_i$-insertion}. Some of these terms were introduced in the \cite{CLARKE1997237}

\begin{Def}\label{def8}
Given a word $w \in \Sset_{(A,m)}$. A \textit{block} is a subset $B$ of $(A,m) \cup \infty$ such that $B \cap (A,m) \neq \emptyset$. The block $B$ is called \textit{open} if $\infty \in B$, \textit{closed} if $\infty \notin B$, and \textit{improper} if $\mid B\mid =1$. At the time constructing $v_i$-insertion, the \textit{$v$ embraceable open block}, abbreviated by $v\eob$, is the open block that has the letter next to $\infty$ is not $v$. For example, if we are constructing $4_1$-insertion, then $\infty 4 1$ is not the $4$ embraceable open block, but $\infty 3 1$ is.
\end{Def}
\begin{Def}
An \textit{insertion} is a sequence $S=B_1-B_2\dots B_r$ of blocks. The insertion $S$ is valid if for each $i$ with $1 \leq i < n$ we have $O(B_i) < C(B_i)$.
\end{Def}
\begin{Def}\label{def9}
Let $w$ be a permutation with descent block decomposition is $B_1-\dots-B_r$. Let $v$ is the letter in the multiset $M(w)$. The $v_i$-insertion of $w$ is the sequence of blocks obtained by 
\begin{itemize}
    \item deleting any descent block $B$ of $w$ for which
    \begin{itemize}
        \item $O(B)>v$, or
        \item $O(B)=v$ and $B$ is to the right of the $i^th$ letter $v$ of $w$
    \end{itemize}
    \item replacing any remaining letter of $w$ that is
    \begin{itemize}
        \item greater than $v$, or
        \item equal to $v$ and that letter is to the left of the $i^th$ letter $v$ of $w$
    \end{itemize}
    by $\infty$
    \item replacing each remaining descent block by its underlying set
\end{itemize}
\end{Def}
\begin{Exam}
With the word $w=5_12_1-3_1-5_23_2-4_11_1-5_33_31_2$, the $3_2$-insertion of $w$ is the sequence $\{2_1,\infty\}-\{3_2,\infty\}-\{1_1,\infty\}-\{1_2,3_3,\infty\}$, which will be written as $\infty2_1-\infty3_2-\infty1_1-\infty3_31_2$
\end{Exam}
\begin{Def}\label{def10}
The insertion $S$ is called \textit{valid} when:
\begin{itemize}
    \item $\forall$ block $B \in S$, $O(B) < C(B)$
    \item $\forall v_i$ and $v_j\in S$, if $i < j$, then $v_i$ is to the left of $v_j$ in $S$-insertion
\end{itemize}
\end{Def}
\begin{Rem}
All the $v_i$-insertion of the word $w$ is valid
\end{Rem}
\begin{Def}\label{def11}
The \textit{well-defined 4-tuple-letter set} $W=\{W_1,W_2,\dots,W_n\}$ is a set of 4-tuple-letter such that:
\begin{itemize}
    \item can find the position to insert new block in $\zeta$ induction step
    \item $\mid \Destop(W)\mid = \mid\Desbot(W)\mid$
    \item $\forall v \in A, W\vert _{\v=v}=\{W_i:\v(W_i)=v\}$  is $v$-consistent-4-tuple-letter set, in which $A$ is the set formed from distinct $\v(W_i)$ in $W$
\end{itemize}
\end{Def}
\begin{Def}\label{def12}
Given $S$ which is a valid $v_{i-1}$-insertion or the ${v-1}_j$-insertion. The 4-tuple-letter $W_{(v,i)}$ is said \textit{constructible on $S$} if we can construct valid $v_i$-insertion based on $S$ and $W_{(v,i)}$
\end{Def}
\begin{Def}\label{def13}
Given a valid insertion $S$ such that $S$'s largest letter is $v-1$. Given a $v$-consistent 4-tuple-letter set $W\vert _{\v=v}={W_{(v,1)}, \dots, W_{(v,k)}}$. $W$ is said \textit{constructible on $S$} if:
\begin{itemize}
    \item $W_{(v,k)}$ is constructible on $S$
    \item $\forall i \in \overline{1,k-1}$, $W_{(v,i)}$ is constructible on $v_{i+1}$-insertion
\end{itemize}
\end{Def}

\section{Bijection on repetitive permutation set transforms $\mad$ and $\madl$ statistics}
In this section, we only consider the correctly indexed word as restricted in the \cref{index}. The resulted word of the bijection will also be proved to be correctly indexed.

\paragraph{$\Phi$ function} Given a word $w$ over $\Sset_{(A,m)}$. We construct the function $\Phi$ maps the word $w$ to the word $w^{\prime} = \Phi(w)$ such that:
\begin{equation*}
    ((2\underline{31}),(\underline{31}2),(\underline{21}1),(1\underline{21}),(\underline{21}2),(2\underline{21}),\underline{21})w = ((2\underline{31}),(\underline{31}2),(\underline{21}1),(1\underline{21}),(2\underline{21}),(\underline{21}2),\underline{21})\Phi(w)
\end{equation*}
In order to achieve $(\underline{21}2)w = (2\underline{21})\Phi(w)$, the $\Phi$ will be constructed such that for all $v$ in $M(w)$:
\begin{equation*}
    (\underline{21}2\vert _{2=v})w = (2\underline{21}\vert _{2=v})\Phi(w)
\end{equation*}
and
\begin{align*}
    (\underline{21}1\vert _{1=v})w &= (\underline{21}1\vert _{1=v})\Phi(w)\\
    (1\underline{21}\vert _{1=v})w &= (1\underline{21}\vert _{1=v})\Phi(w)
\end{align*}

\noindent
The function $\Phi$ consists of 3 main functions:
\begin{itemize}
    \item function $\delta$ splits the word into the set of 4-tuple-letters
    \item function $\epsilon$ maps the set of 4-tuple-letter of $w$ with the set of 4-tuple-letter of $w^{\prime}$
    \item function $\zeta$ recover the word from the set of 4-tuple-letters
\end{itemize}
\begin{equation*}
    w^{\prime} = \Phi(w) = \zeta \circ \epsilon \circ \delta (w)
\end{equation*}
\begin{figure}[!h]
\begin{center}
    \begin{tikzpicture}[line cap=round,line join=round,>=triangle 45,x=1cm,y=1cm]
\draw (-0.5,5) node[anchor=north west] {$\{W_1,W_2,\dots ,W_n\}$};
\draw (0.9,9) node[anchor=north west] {$w$};
\draw (5.7,9.1) node[anchor=north west] {$w^{\prime}$};
\draw (3.4,9.5) node[anchor=north west] {$\Phi$};
\draw (4.7,5) node[anchor=north west] {$\{U_1,U_2,\dots ,U_n\}$};
\draw (6.1,7) node[anchor=north west] {$\zeta = \delta^{-1}$};
\draw (0.5,7) node[anchor=north west] {$\delta$};
\draw (3.5,5.1) node[anchor=north west] {$\epsilon$};
\draw [->,line width=1pt] (1.2,8.2) -- (1.2,5);
\draw [->,line width=1pt] (2.8,4.7) -- (4.7,4.7);
\draw [->,line width=1pt] (6,5.2) -- (6,8.2);
\draw [dashed,->,line width=1pt] (1.7,8.8) -- (5.6,8.8);
    \end{tikzpicture}
\end{center}
\label{PhiBijection}
\caption{$\Phi$ flow}
\end{figure}
These functions will be introduced in the following sub-sections. We will also prove that they are bijective.

\subsection{Function $\delta$}
\paragraph{$\delta$ function}Given a word $w=w_1w_2\dots w_n$. The function $\delta$ maps the word $w$ to the consistent 4-tuple-letter set $W = \{W_1,W_2,\dots ,W_n\}$, in which $W_i$ is the 4-tuple-letter of $w_i$.
\begin{alignat*}{4}
& \delta:   \quad && \Sset_{(A,m)} && \to && \quad \quad L_{\Sset_{(A,m)}}\\
&    \quad && \quad w \quad && \mapsto \quad && \{W_1,W_2,\dots,W_n\}
\end{alignat*}
The following lemma is obvious
\begin{Lem}\label{lma21}
Given $w \in \Sset_{(A,m)}$. Let $W = \{W_1,\dots,W_n\}$ be an image or the word $w$ under $\delta$, $W\vert _{\v=v}$ be a set of all letters that has value is $v$ in $W$. Then $\forall v \in A$, $W\vert _{\v=v}$ is the $v$-consistent 4-tuple-letter set.
\end{Lem}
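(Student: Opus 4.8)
The plan is to fix $v\in A$ and verify directly the three defining clauses of a $v$-consistent 4-tuple-letter set (\cref{def6}) for $W\vert_{\v=v}$, where $W=\delta(w)=\{W_1,\dots,W_n\}$. None of the clauses needs more than unwinding definitions, so the argument will be short; the only mildly substantive point is the monotonicity of the right embracing numbers, which is in essence \cref{rem3}.

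First I would observe that every member of $W\vert_{\v=v}$ has value $v$: this is immediate from the meaning of the notation, since $W\vert_{\v=v}$ is exactly the set of those $W_i\in W$ whose first coordinate equals $v$. Next I would let $k$ be the number of occurrences of $v$ in $w$, so that $k=m(v)$ and $k$ is also the cardinality of $W\vert_{\v=v}$, and invoke \cref{index}: the duplicate indices carried by the copies of $v$ are obtained by numbering those copies from left to right, hence they are precisely $1,2,\dots,k$. In particular every $j\in[k]$ equals $\d(W_i)$ for some $W_i\in W\vert_{\v=v}$, which settles the second clause.

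For the remaining clause — the monotonicity relating duplicate index and right embracing number — I would argue as follows. Take $W_i,W_j\in W\vert_{\v=v}$ with $\d(W_i)<\d(W_j)$; since duplicate indices increase from left to right, the copy of $v$ recorded by $W_i$ sits at a position $p$ strictly to the left of the position $q$ of the copy recorded by $W_j$. Any descent block of $w$ lying strictly to the right of position $q$ also lies strictly to the right of position $p$, and whether such a block $B$ embraces a letter of value $v$ is merely the condition $O(B)<v<C(B)$, which depends only on $B$ and on the common value $v$, not on which copy is chosen. Hence every descent block counted by $\r(W_j)$ is counted by $\r(W_i)$ as well, so $\r(W_i)\ge\r(W_j)$, in accordance with \cref{rem3}. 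Combined with the first two clauses, this shows $W\vert_{\v=v}$ is $v$-consistent.

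I do not expect a genuine obstacle — the lemma really is a bookkeeping fact — but the one spot that wants care is making sure ``strictly to the right of a letter'' is read so that a descent block to the right of the later copy of $v$ is automatically to the right of the earlier one, and that ``embraces'' is extracted from the block's opener and closer together with the value $v$; once those conventions are fixed, the inequality in the third clause comes out in the direction claimed.
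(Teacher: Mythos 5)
Your proof is correct and is exactly the definition-unwinding the paper has in mind --- the paper itself offers no proof of \cref{lma21}, simply declaring it obvious. One caution: you derive $\r(W_i)\ge\r(W_j)$ whenever $\d(W_i)<\d(W_j)$, which agrees with \cref{rem3} and with every later use of $v$-consistency (e.g.\ the step $\r(W_{(v,1)})=\max_i \r(W_{(v,i)})$ in the proof of \cref{lma16}), but the third bullet of \cref{def6} as literally printed states the reverse inequality; that bullet is evidently a typo, so the direction you establish is the intended one.
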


\pagebreak
\subsection{Function $\zeta$}
Before introducing the $\zeta$ function, we have following remarks:
\begin{Lem}\label{lma14}
The right embracing number of $v_i$ in $w$ equals the number of $v$ embraceable open block to the right of the block containing $v_i$ in the $v_i$-insertion of $w$.
\end{Lem}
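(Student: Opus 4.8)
The plan is to unwind the definition of the $v_i$-insertion (Definition \ref{def9}) and track exactly what happens to a descent block $B$ that embraces $v_i$ in the original word $w$, and conversely to check that every $v$-embraceable open block lying to the right of the block containing $v_i$ in the $v_i$-insertion arises in this way. Recall that, by Definition \ref{def2}, a descent block $B$ embraces the letter $v_i$ iff $O(B) < v_i < C(B)$, i.e. iff its opener is strictly below $v$ and its closer is strictly above $v$. Since $\r(v_i)$ counts exactly those descent blocks $B$ that lie strictly to the right of $v_i$ and satisfy this inequality, it suffices to exhibit a bijection between such blocks $B$ and the $v$-embraceable open blocks strictly to the right of the block containing $v_i$ in the $v_i$-insertion of $w$.

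First I would describe the forward map. Take a descent block $B$ of $w$ lying strictly to the right of $v_i$ with $O(B) < v < C(B)$. Because $O(B) < v$, the block $B$ is \emph{not} deleted in the first step of the construction of the $v_i$-insertion (deletion only happens when $O(B) > v$, or when $O(B) = v$ and $B$ is to the right of the $i^{\text{th}}$ copy of $v$; neither applies since $O(B) < v$ strictly). In the second step every letter of $B$ that is greater than $v$ — in particular $C(B)$, and possibly several insiders — is replaced by $\infty$, while the letter $O(B)$ and any other letters $\le v$ that are not to the left of $v_i$ survive; crucially the bottom letter $O(B)$ is strictly less than $v$, so after collapsing $B$ to its underlying set the resulting block is \emph{open} (it contains $\infty$) and the letter sitting next to $\infty$, namely the largest surviving letter of $B$, is $\le v$ and in fact $< v$ because it was either $O(B)$ or an insider strictly between $O(B)$ and $C(B)$ that happened to be $\le v$; either way it is not equal to $v$... here one must be slightly careful, since an insider equal to $v$ could survive — but such an insider would be a copy of $v$ lying to the right of $v_i$ inside $B$, which is exactly a copy that is \emph{not} replaced by $\infty$, so the letter next to $\infty$ could be $v$. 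The point to nail down is that in that situation the block is still $v$-embraceable precisely when its \emph{next-to-}$\infty$ letter is not $v$, and one argues that because $O(B) < v$, there is always a letter of $B$ strictly below $v$ surviving, and it ends up adjacent to $\infty$ after all larger letters collapse, so the block genuinely is a $v$-embraceable open block. It clearly lies to the right of the block containing $v_i$ since $B$ was to the right of $v_i$ in $w$ and the construction preserves left-to-right order.

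For the reverse direction I would take a $v$-embraceable open block $B'$ in the $v_i$-insertion lying strictly to the right of the block containing $v_i$, and trace it back: $B'$ came from collapsing some descent block $B$ of $w$ whose largest-surviving-below-$v$ letter sits next to $\infty$; the presence of $\infty$ means $C(B) > v$, and $v$-embraceability (next-to-$\infty$ letter $\ne v$) forces the surviving bottom to be $< v$, hence $O(B) < v$, so $B$ embraces $v_i$, and it lies to the right of $v_i$ because $B'$ was to the right of the block of $v_i$. These two maps are mutually inverse by construction, which gives the claimed equality of cardinalities, i.e. $\r(v_i)$ equals the number of $v$-embraceable open blocks to the right of the block containing $v_i$ in the $v_i$-insertion.

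I expect the main obstacle to be the careful bookkeeping around insiders of $B$ that are equal to $v$ (copies of $v$ lying strictly inside a descent block, to the right of $v_i$): these are precisely the letters \emph{not} replaced by $\infty$, and I have to make sure that after the collapse the letter adjacent to $\infty$ is correctly identified and that the $v$-embraceability test of Definition \ref{def8} (next-to-$\infty$ letter $\ne v$) matches the condition $O(B) < v$ in all cases — including the degenerate case where $B$ contains $v_i$ itself, which is exactly why the statement refers to blocks strictly to the right of the \emph{block containing} $v_i$ rather than strictly to the right of the letter $v_i$.
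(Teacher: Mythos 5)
The paper states \cref{lma14} without proof (it is offered as a preliminary remark before the description of $\zeta$), so there is nothing to compare your argument against; judged on its own terms, your overall strategy --- a bijection between the descent blocks of $w$ strictly to the right of $v_i$ that embrace $v_i$ and the $v$-embraceable open blocks to the right of the block containing $v_i$, obtained by tracking each block through \cref{def9} --- is the natural and correct one. However, the delicate case you yourself flag is resolved the wrong way, and this is a genuine gap. Suppose $B$ lies strictly to the right of $v_i$, satisfies $O(B)<v<C(B)$, and contains a copy of $v$. That copy is necessarily some $v_j$ with $j>i$, so by \cref{def9} it is \emph{not} replaced by $\infty$; since a descent block is strictly decreasing, this surviving $v_j$ is the largest surviving letter of $B$ and therefore ends up adjacent to $\infty$ after the letters greater than $v$ collapse. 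By \cref{def8} the resulting open block is then \emph{not} $v$-embraceable. Your claim that ``there is always a letter of $B$ strictly below $v$ surviving, and it ends up adjacent to $\infty$ after all larger letters collapse'' is false, because letters equal to $v$ and to the right of $v_i$ do not collapse. Concretely, for $w=5_1-7_15_23_1$ and the letter $5_1$, the block $7_15_23_1$ has $O(B)=3<5<7=C(B)$, yet in the $5_1$-insertion it becomes $\infty 5_2 3_1$, which is exactly the shape \cref{def8} gives as the non-example. The same happens in the paper's own example: for $3_2$ in $5_12_1-3_1-5_23_2-4_11_1-5_33_31_2$, the block $5_33_31_2$ satisfies $O(B)<3<C(B)$ but becomes $\infty 3_31_2$, which is not $3$-embraceable.

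The correct repair goes in the opposite direction from the one you chose: such blocks must be \emph{excluded from both sides} of the bijection. The $v$-embraceable open blocks to the right of the block of $v_i$ correspond exactly to the blocks $B$ to the right of $v_i$ with $O(B)<v<C(B)$ \emph{and} $v\notin B$. This matches the right embracing number only when $\r(v_i)$ is read as the pattern count of \cref{rem4}, i.e.\ as the number of occurrences of $2\underline{31}$ anchored at $v_i$: a strictly decreasing block containing a copy of $v$ has no consecutive pair $w_j>v>w_{j+1}$, so it contributes $0$ to that count, even though it satisfies the literal inequality $O(B)<v<C(B)$ of \cref{def2}. So your proof needs two changes: (i) replace the ``it is still $v$-embraceable'' sub-argument by its negation, and (ii) observe that the blocks thereby dropped are precisely those that the $2\underline{31}$-based definition of $\r(v_i)$ also does not count, so the two sides still agree. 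The rest of your argument (deleted blocks never embrace $v_i$ since they have $O(B)\geq v$; blocks with $C(B)\leq v$ stay closed and never embrace $v_i$; left-to-right order is preserved) is sound.
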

\begin{Lem}\label{lma15}
The duplicate index of $v_i$ in $w$ equals the difference between the total number of letter $v$ in $w$ and the number of letters $v$ that are to the right of the block containing $v_i$ in the $v_i$-insertion of $w$.
\end{Lem}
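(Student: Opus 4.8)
The plan is to unwind the definitions on both sides and then read off the answer directly from the recipe for the $v_i$-insertion in \cref{def9}. Write $k = m(v)$ for the number of occurrences of $v$ in $w$ and let $v_1, v_2, \dots, v_k$ be these occurrences listed from left to right; by \cref{index} we then have $\d(v_i) = i$, so the statement is equivalent to the claim that exactly $k-i$ letters equal to $v$ lie strictly to the right of the block containing $v_i$ in the $v_i$-insertion of $w$ — in other words, that the letters equal to $v$ occurring to the right of that block are precisely $v_{i+1}, \dots, v_k$.

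To prove this I would track, occurrence by occurrence, what the three steps of \cref{def9} do to each $v_j$. For $j < i$, the occurrence $v_j$ lies on the ``wrong side'' of $v_i$: if it is the opener of its descent block $B$ then $O(B)=v$ forces $B$ to be removed in the deletion step, and otherwise $v_j$ is overwritten by $\infty$ in the relabelling step; either way no copy of $v$ originating to the left of $v_i$ survives. For $j = i$, the descent block $B$ of $v_i$ survives the deletion step (its opener is at most $v$, and if it equals $v$ that opener is $v_i$ itself, so $B$ is not the kind of block the deletion clause removes), and $v_i$ is not overwritten, so $v_i$ survives inside the block that becomes ``the block containing $v_i$''. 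For $j > i$, the crucial point is that a descent block is strictly decreasing and hence contains at most one letter equal to $v$; this pins down the opener of $v_j$'s block relative to $v$ and to the position of $v_i$, and lets one check that this block is not deleted and that $v_j$ is not overwritten, so each of $v_{i+1}, \dots, v_k$ survives as a letter $v$.

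It remains to locate the survivors. The deletion and relabelling steps neither reorder nor merge across positions, so the left-to-right order in the $v_i$-insertion is inherited from $w$; combined with the fact that no descent block contains two copies of $v$, this shows that each surviving $v_j$ with $j > i$ lies strictly to the right of the descent block of $v_i$, hence to the right of the block containing $v_i$. Putting the two paragraphs together, the letters equal to $v$ in the $v_i$-insertion are exactly $v_i$ (inside the block containing $v_i$) together with $v_{i+1}, \dots, v_k$, all strictly to its right, so the count on the right-hand side of the statement is $k-i$ and we conclude $\d(v_i) = i = m(v) - (k-i)$, as desired. I would organize this in parallel with the proof of \cref{lma14}, whose argument handles the analogous bookkeeping of position relative to the block containing $v_i$ for $v$-embraceable open blocks.

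The main obstacle is the case $j > i$ in the second paragraph: one has to be sure that the clause of the deletion rule triggered by an opener equal to $v$ never removes one of the copies $v_{i+1}, \dots, v_k$ that the statement needs to count. This is exactly where the strict decrease of descent blocks, the convention that the opener is the unique smallest (rightmost) entry of its block, and the precise meaning of ``the $i$-th letter $v$'' all have to be used together; everything else in the proof is routine bookkeeping.
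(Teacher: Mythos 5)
You state your plan against a lemma that the paper itself never proves: \cref{lma15} is given (like \cref{lma14}) as a bare preparatory remark before the description of $\zeta$, so there is no argument of the paper's to compare yours against. Judged on its own, your reduction is the right one: it suffices to show that the letters equal to $v$ surviving in the $v_i$-insertion are exactly $v_i,v_{i+1},\dots,v_k$, with $v_{i+1},\dots,v_k$ lying in blocks strictly to the right of the block of $v_i$, and then to count. Your handling of $j<i$, of $j=i$, and of the order-preservation step is indeed routine bookkeeping (for $j<i$ note that even when $B$ is not deleted, $v_j$ is overwritten by $\infty$, and a block with no finite letter is not a block by \cref{def8}, so the conclusion stands either way).

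The case you yourself single out as the main obstacle, however, is a genuine gap, and it cannot be closed against \cref{def9} as literally written. If $j>i$ and $v_j$ is the opener of its descent block $B$, then $O(B)=v$, and $B$, being a contiguous factor of $w$ that does not contain $v_i$, lies entirely to the right of $v_i$; the second deletion clause of \cref{def9} (``$O(B)=v$ and $B$ is to the right of the $i$-th letter $v$'') then deletes $B$ and with it the very copy $v_j$ the lemma needs to count. Concretely, for $w=3-43-431$ the literal rule yields $3_1-\infty 3_31_1$ as the $3_1$-insertion, in which only one letter $3$ lies to the right of the block of $3_1$, whereas the lemma requires two. The lemma --- and the inductive construction of $\zeta$, whose displayed $4_2$-insertion $3_1-\infty 1_1-1_2-4_22_1-4_3-\infty 3_2$ retains the improper block $\{4_3\}$ with opener $4=v$ to the right of $4_2$ --- is only consistent with reading that clause with ``left'' in place of ``right'', i.e.\ with deleting exactly those blocks that would otherwise become all-$\infty$. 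So the ``check that this block is not deleted'' you defer cannot succeed as stated; your proof must first record this correction to \cref{def9} (or define the $v_i$-insertion directly as: keep all letters smaller than $v$ together with $v_i,\dots,v_k$, replace everything else by $\infty$, and drop blocks with no finite letter). With that repair in place, the strict decrease of descent blocks gives at most one copy of $v$ per block and the rest of your argument does close the proof.
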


\paragraph{$\zeta$ function}\label{zeta}
We shall describe the function $\zeta$ which maps the consistent set of 4-tuple-letter $W=\{W_1,W_2,\dots,W_n\}$ to the word $w^{\prime}$ such that $\delta(w^{\prime}) = W$.
\begin{alignat*}{4}
& \zeta:   \quad && \quad \quad L_{\Sset_{(A,m)}} && \to && \quad \Sset_{(A,m)}\\
&    \quad && \{W_1,W_2,\dots,W_n\} \quad && \mapsto \quad && \quad w
\end{alignat*}

\noindent
The insertion will be constructed inductively, follow the increasing order of value and the decreasing order of the duplicate index. After the $v_i$-insertion is constructed, if $i \neq 1$, then the $v_{i-1}$-insertion will be constructed based on $v_i$-insertion, otherwise ${(v+1)}_{k_{v+1}}$-insertion, in which $k_{v}$ is the number of 4-tuple-letter having value is $v$ in $W$.

We will show that the right embracing number and duplicate index are ensured in each induction case. Later on, in the \cref{lma13}, we will also prove that we can execute the consistent 4-tuple-letter set under $\zeta$, and the position defined by the algorithm is the only position that we can insert a new letter into.

The $1_{k_1}$-insertion of $w$ is either $\{1\}$ or $\{1,\infty\}$, according to whether $1_{k_1}$ is an outsider or an opener. Suppose that the $v_i$-insertion $S=B_1-B_2\dots B_r$ of $w$ has been constructed.
\begin{itemize}
    \item Consider the case the next insertion to be constructed is $v_{i-1}$-insertion. To construct the $v_{i-1}$-insertion of $w$, we must insert $v_{i-1}$ in the correct place in $S$. Let the embracing number of $W_{(v,i-1)}$ in $w$ be $e$. Let $B_i$ and $B_j$ denotes the $e^{\text{th}}$ and the $(e+1)^{\text{th}}$ $v$-embraceable open block from the right in $S$.
    \begin{itemize}
        \item If $W_{(v,i-1)}$ is an insider or a closer in $w$ then $v_{i-1}$ must be inserted into $B_j$, and if $W_{(v,i-1)}$ is a closer then $B_j$ must be closed by the removal of its $\infty$. \numbAtBack{$\zeta-1$}
        \item We consider if $W_{(v,i-1)}$ is an outsider. Let $B^{\prime}$ be a block that contains $v_i$. If the block $B_i$ is to the left of $B^{\prime}$, we will insert improper block $\{v_{i-1}\}$ immediately to the left of $B_i$.
        Otherwise, we insert improper block $\{v_{i-1}\}$ immediately to the left of $B^{\prime}$. \numbAtBack{$\zeta-2$}
        \item Similar to the case of outsider, if $W_{(v,i-1)}$ is an opener, we have to determine whether the new block should be inserted immediately after $B_i$ or $B^{\prime}$. The block to be inserted is the open block $\{v_{i-1},\infty\}$. \numbAtBack{$\zeta-3$}\\
    \end{itemize}
    \item Consider the case the next insertion to be constructed is $(v+1)_{k_{v+1}}$-insertion. Let the embracing number of $W_{(v+1)_{k_{v+1}}}$ in $w$ be $e$. Let $B_i$ and $B_j$ denotes the $e^{\text{th}}$ and the $(e+1)^{\text{th}}$ $v$-embraceable open block from the right in $S$.
    \begin{itemize}
        \item If $W_{(v+1)_{k_{v+1}}}$ is an insider or a closer in $w$ then $(v+1)_{k_{v+1}}$ must be inserted into $B_j$, and if $W_{(v+1)_{k_{v+1}}}$ is a closer then $B_j$ must be closed by the removal of its $\infty$.\numbAtBack{$\zeta-4$}
        \item If $W_{(v+1)_{k_{v+1}}}$ is an outsider then the improper block $\{(v+1)_{k_{v+1}}\}$ must be inserted immediately to the left of $B_i$\numbAtBack{$\zeta-5$}
        \item Similar to the case of outsider, if $W_{(v+1)_{k_{v+1}}}$ is an opener, we have to insert the open block $\{(v+1)_{k_{v+1}},\infty\}$ immediately to the left $B_i$.\numbAtBack{$\zeta-6$}
    \end{itemize}
\end{itemize}

\begin{Exam}\label{ex3}
From a 4-tuple-letter set $W = \{(4,1,\close,0), (3,1,\insi,1), (1,1,\open,0),$ $ (4,2,\out,0), (4,3,\close,0), (2,1,\open,0)\}$, we apply $\zeta$ to construct word $w$:
\begin{itemize}
    \item $1_1$-insertion: $\infty 1_1$
    \item $2_1$-insertion: $\infty 1_1 - \infty 2_1$
    \item $3_1$-insertion: $\infty 3_1 1_1 - \infty 2_1$
    \item $4_3$-insertion: $\infty 3_1 1_1 - 4_3 2_1$
    \item $4_2$-insertion: $\infty 3_1 1_1 - 4_2 - 4_3 2_1$
    \item $4_1$-insertion: $4_1 3_1 1_1 - 4_2 - 4_3 2_1$
\end{itemize}
\end{Exam}

\begin{The}\label{lma13}
Given a word $w$ with its set of 4-tuple-letter $W$. Then:
\begin{itemize}
    \item $W$ is executable under $\zeta$
    \item $\zeta(W) = w$
\end{itemize}
\end{The}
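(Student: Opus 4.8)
The plan is to prove the two assertions together, by induction along the list of pairs $(v,i)$ in the order $\zeta$ treats them: value $v$ increasing and, for each fixed $v$, duplicate index $i$ decreasing, starting at $(1,k_1)$ and ending at $(v_{\max},1)$, where $v_{\max}$ is the largest value occurring in $W$. Write $S_{(v,i)}$ for the insertion $\zeta$ has built immediately after treating $(v,i)$. The invariant I would carry is: \emph{the step treating $(v,i)$ can be performed (the site $\zeta$ searches for exists) and $S_{(v,i)}$ is exactly the $v_i$-insertion of $w$ in the sense of \cref{def9}}. Since $W=\delta(w)$, \cref{lma21} guarantees that each $W\vert_{\v=v}$ is a $v$-consistent 4-tuple-letter set, so the recorded data are coherent. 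Granting the invariant throughout, the last pair treated is $(v_{\max},1)$, and the corresponding insertion of $w$ leaves every descent block in place with no entry turned into $\infty$, hence equals $w$ itself (each block written as its underlying set); this yields $\zeta(W)=w$, while "every step is performable" is precisely the statement that $W$ is executable under $\zeta$, i.e. that each $W\vert_{\v=v}$ is constructible in the sense of \cref{def12,def13}.

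\textbf{Base case.} For $(1,k_1)$ nothing has been built. By \cref{rem9} the letter $1$ is never a descent top, so the last copy of $1$ is an opener or an outsider, and $\zeta$ emits $\{1,\infty\}$ or $\{1\}$ accordingly; inspecting \cref{def9} shows the $1_{k_1}$-insertion of $w$ is precisely this single block, so the invariant holds.

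\textbf{Inductive step.} Assume the invariant after the pair preceding $(v,i)$, so the current insertion $S$ equals that insertion of $w$; let $x$ be the copy $v_i$ that $\zeta$ must now place and $e:=\r(W_{(v,i)})$ its recorded right embracing number. \emph{Site exists:} by \cref{lma14}, $e$ counts the $v$-embraceable open blocks lying to the right of the block containing $x$ in the true $v_i$-insertion of $w$, so with the induction hypothesis $S$ contains the $e$-th (and, when $x$ is an insider or a closer, the $(e{+}1)$-th) $v$-embraceable open block from the right that $\zeta$ refers to, while the auxiliary block used in the outsider/opener cases — the one containing the previously placed copy, or the block $B_i$ at a change of value — is present because it was produced one step earlier; hence $W_{(v,i)}$ is constructible on $S$. \emph{Placement correct:} passing from the previous insertion of $w$ to its $v_i$-insertion amounts to revealing the single letter $x$ inside its block, together with — at a within-value transition $i{+}1\to i$ — removing the block whose opener was the previously handled copy of $v$. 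By \cref{lma15} the recorded duplicate index $\d(W_{(v,i)})$ fixes how many copies of $v$ must remain to the right of $x$, hence which $v$-embraceable open block $x$ joins, while the recorded position type, together with \cref{rem9}, fixes whether the new block is improper, open, or obtained by closing an open block by deleting its $\infty$. Matching each of the six cases in the description of $\zeta$ against these three pieces of data shows $S_{(v,i)}$ is the $v_i$-insertion of $w$, closing the induction. (The "only position" remark drops out of the same analysis, since the embracing-number and duplicate-index constraints have a unique solution in the current insertion.)

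\textbf{Main obstacle.} The delicate part is "placement correct": showing that the site $\zeta$ selects — a function only of the four recorded entries of $W_{(v,i)}$ — coincides with the genuine location of $v_i$ in $w$, including that the block carrying the previous copy of $v$ is exactly the one that must vanish when the duplicate index drops. \cref{lma14,lma15} are the bridge that makes this tractable, re-expressing "right embracing number" and "duplicate index" in terms of $v$-embraceable open blocks and of copies of $v$ still to the right in the current insertion; the real labour is the case analysis over the four position types and over the two transition kinds ($i{+}1\to i$ within a value, and $v\to v{+}1$ with the duplicate index resetting to $k_{v+1}$), verifying in each case that the block structure and the left-to-right order among equal letters produced by $\zeta$ reproduce those of $w$.
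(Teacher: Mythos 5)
Your proposal is correct and follows essentially the same route as the paper: an induction along the order in which $\zeta$ places the letters, with the invariant that the partial insertion equals the corresponding $v_i$-insertion of $w$, using \cref{lma21} for consistency and \cref{lma14,lma15} to translate the recorded right embracing number and duplicate index into the location of the insertion site. The paper simply carries out more of the case analysis you defer (in particular, explicitly ruling out the alternative placements in the outsider and opener cases to establish uniqueness of the site), but the decomposition and the key lemmas are the same.
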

\begin{proof}
We will use the same annotation as the $\zeta$ description in this proof.

It is clear that in the word $w$, letters $1$ will always be outsider or opener and have the right embracing numbers are $0$. So that the $1_{k_1}$-insertion of $\zeta(W)$ can be constructed, and equals to $1_{k_1}$-insertion of $w$

Assume that we can construct $v_i$-insertion from $W$ under $\zeta$ and the $v_i$-insertion of $\zeta(W)$ equals to $v_i$-insertion of $w$.
\begin{itemize}
    \item We consider the case when we construct $v_{i-1}$-insertion based on $v_i$-insertion. We shall prove that we can find the position in $v_i$-insertion to insert $v_{i-1}$ (a), and the $v_{i-1}$-insertion of $\zeta(W)$ equals to $v_{i-1}$-insertion of $w$ (b). In order to prove (b), we will find the only possible position for block containing $v_{i-1}$ in $v_{i-1}$-insertion of $w$. We also need to prove the found position satisfy the constraint about embracing value and duplicate index.
    \begin{itemize}
        \item Consider case ($\zeta_1$)\\
        Let $B$ denote the block contains $v_{i-1}$ in $v_{i-1}$-insertion. If $v_{i-1}$ is the insider or closer, then the $v_{i-1}$ insertion of $w$ can be created by removing $v_{i-1}$ from $B$, and add $\infty$ into $B$ if $v_{i-1}$ is the closer.
    
        Follow the \cref{lma14}, there is exactly $\r(v_{i-1})$ $v$-embraceable open blocks to the right of $B$ in $v_i$-insertion. So in the $v_i$-insertion, which equals to $v_i$-insertion of $\zeta(W)$, there exists a block $B^{\dprime}$, which is the block $B$ before inserting $v_{i-1}$, such that it is the $\l(v_{i-1}) + 1$-th $v$-embraceable open blocks count from the right. $B^{\dprime}$ is the $B_i$ block we find in the $\zeta$ description.
    
        We remark that since $W\vert _{\v=v}$ is $v$-consistent set of 4-tuple-letter set, $W_{(v,i-1)}$ has its right embracing numbers larger or equal to the right embracing numbers of other 4-tuple-letters of $W\vert _{\v=v}$ inserted into $v_i$-insertion. So that $B_j$ block is to the left of all inserted $v$ letters. Hence the $v$ letters in $v_{i-1}$-insertion are ordered to follow the increasing order of their duplicate indices.\\
        It is clear that:
        \begin{align*}
            \text{$v_{i-1}$-insertion of $w$} &= \text{insert $v_{i-1}$ into the $v_i$-insertion}\\
            &= \text{insert $v_{i-1}$ into the $v_i$-insertion of $\zeta(W)$}\\
            &= \text{$v_{i-1}$-insertion of $\zeta(W)$}
        \end{align*}
        (a) and (b) are correct in this case.
        \item Consider case ($\zeta_1$)\\
        Let $B = \{v_{i-1}\}$ denote the block contains $v_{i-1}$ in $v_{i-1}$-insertion. When $v_{i-1}$ is the outsider, the $v_{i-1}$ insertion of $w$ can be created by removing $B$ from $v_i$-insertion.
    
        Follow the \cref{lma14} and \cref{lma15}, we can find the block $B_i$ and the block $B^{\prime}$ in $v_i$-insertion. These 2 blocks remain the same in $v_{i-1}$-insertion. We consider 2 cases:
        \begin{itemize}
            \item when $B_i$ is to the left of $B^{\prime}$ in $v_{i-1}$-insertion
            
            In $v_{i-1}$-insertion of $w$, $B$ must be immediately to the left of $B_i$. Other cases are:
            \begin{itemize}
                \item when $B$ is to the right of $B_i$\\
                In this case, the number $v$-embraceable open block to the right of $B$ will smaller than $\r(W_{(v,i-1)})$, which according to the \cref{lma14}, implies that $\r(v_{i-1})$ in $v_{i-1}$-insertion will be smaller than $\r(W_{(v,i-1)})$. Since this is a contradiction the setting $\r(W_{(v,i-1)}) = \r(v_{i-1})$, this case is eliminated.
                \item when $B$ is to the left of $B_i$ but not immediately before $B_i$\\
                This assumption is equivalent to existing block $B^{\dprime}$ in the left of $B_i$ such that $B^{\dprime}$ is immediately to the right of $v_{i-1}$. We notice that since when $B_i$ is to the left of $B^{\prime}$, all the $v$ letters in $v_{i-1}$-insertion besides $v_{i-1}$ are to the right of $B_i$.\\
                If $B^{\dprime}$ is the open-block, then $B^{\dprime}$ must be a $v$-embraceable open block. Follow the \cref{lma14}, the right embracing number of $v_(i-1)$ will be greater than $\r(W_{(v,i-1)})$, which is a contradiction the setting $\r(W_{(v,i-1)}) = \r(v_{i-1})$\\
                If $B^{\dprime}$ is the closed-block, then $B^{\dprime}-C(B^{\dprime})$ will create another descent pair. This is contradict the setting $W_{(v,i-1)}$ is outsider.
            \end{itemize}
            \item when $B_i$ is to the right of $B^{\prime}$ in $v_{i-1}$-insertion\\
            Then in $v_{i-1}$-insertion of $w$, $B$ must be immediately to the left of $B^{\prime}$. Otherwise:
            \begin{itemize}
                \item when $B$ is to the right of $B^{\prime}$\\
                Follow the \cref{lma15}, the duplicate index of $v_(i-1)$ will be smaller than $i-1$. We reject this case.
                \item when $B$ is to the left of $B^{\prime}$ but not immediately before $B_i$\\
                In this case, all the $v$ letters in $v_{i-1}$-insertion besides $v_{i-1}$ are to the right of $B_i$. This case can be explained why we reject it similarly to the second case of when $B_i$ being to the left of $B^{\prime}$ in $v_{i-1}$-insertion.
            \end{itemize} 
        \end{itemize}
    
        We will show that $v$ letters in $v_{i-1}$-insertion have correct $\r$, and are ordered to follow the increasing order of the duplicate index.
        \begin{itemize}
            \item In the case when the block $B_i$ is to the left of $B^{\prime}$, since $B$ is inserted immediately to the left of $B_i$, the number $v$-embraceable open block to the right of $B$, which equals to $\r(v_{i-1})$ in $v_{i-1}$-insertion, will equals to $\r(W_{(v,i-1)})$.
            \item We consider the case the block $B_i$ is to the right of $B^{\prime}$. Since $B_i$ is to the right of $B^{\prime}$, follows \cref{lma14}, $\r(v_i) \geq \r(v_{i-1})$. Since $W\vert _{\v=v}$ is $v$-consistent set of 4-tuple-letter set, $\r(W_{(v,i)}) \leq \r(W_{(v,i-1)})$. Hence $\r(v_i) = \r(v_{i-1})$, which means there is exactly $\r(v_{i-1})$ $v$-embraceable open block to the right of $B^{\prime}$. Inserting $B$ immediately to the left of $B^{\prime}$ makes $\r(v_{i-1})$ in $v_{i-1}$-insertion equals to $\r(W_{(v,i-1)})$.
        \end{itemize}
        In both cases, since $B$ is inserted to the left of $B^{\prime}$, which is the blocks contain $v_i$, the block sequence from $B^{\prime}$ to the end of $v_{i-1}$-insertion remains. Hence the order between pre-added $v$ letters and their $\r$ values remain. Adding $v_{i-1}$, the one that has the lowest duplicate index for now, to the left of $B^{\prime}$ makes the $v$ letter still order in the increasing order of the duplicate index.
    
        The $\zeta$ correctly chooses the only possible position for inserting $B$ into $v_i$ insertion. (a) and (b) are correct in this case.
        \item Consider case ($\zeta_3$):
        Since $v_{i-1}$ is the opener, so the block contains $v_{i-1}$ in $v_{i-1}$-insertion $B$ will equals to $\{v_{i-1},\infty\}$
    
        Similarly to the proof when considering case ($\zeta_2$), we can prove that the position for inserting $B$ in the $\zeta$ function is the only possible position for inserting $B$ into $v_i$-insertion. It implies that (a) and (b) are correct in this case.
    \end{itemize}

    \item Next, we consider the case when we construct $v+1_{k_{v+1}}$-insertion based on $v_i$-insertion. We shall prove that we can find the position in $v_i$-insertion to insert $v+1_{k_{v+1}}$ (c), and the $v+1_{k_{v+1}}$-insertion of $\zeta(W)$ equals to $v+1_{k_{v+1}}$-insertion of $w$ (d).
    
    The proof for (c) and (d) of ($\zeta_4$), ($\zeta_5$), ($\zeta_6$) sub-cases is similar to the proof of (a) and (b) of ($\zeta_1$), ($\zeta_2$), ($\zeta_3$) sub-cases, respectively. And since $(v+1)_{k_{v+1}}$ is the first $v+1$ letter inserted in $\zeta$, the condition about order between $v+1$ letter for the valid insertion is ensured.
\end{itemize}
From the inductive proof mentioned above, it is clear that:
\begin{align*}
    \begin{cases}
    \forall W_{(v,i)} \in \delta(w), \text{ we can construct $v_i$-insertion from $W$ under $\zeta$} & \iff \delta(w) \text{ is valid}\\
    \forall W_{(v,i)} \in \delta(w), \text{ $v_i$-insertion of $\zeta(W)$ $=$ $v_i$-insertion of $w$} & \implies \zeta(\delta(w)) = w
    \end{cases}
\end{align*}
\end{proof}

\begin{The}\label{the1}
The function $\delta: \Sset_{(A,m)} \to L_{\Sset_{(A,m)}}$ is bijective.
\end{The}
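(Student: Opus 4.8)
The plan is to read off \Cref{the1} from the reconstruction theorem \Cref{lma13}, exhibiting $\zeta$ as a two-sided inverse of $\delta$; almost all of the genuine content is already contained in that theorem, so the argument is short.

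First I would dispatch surjectivity, which is essentially definitional. By \Cref{def5} the codomain $L_{\Sset_{(A,m)}}$ is \emph{defined} to be $\{\text{4-tuple-letter set of } w : w \in \Sset_{(A,m)}\}$, i.e.\ it is literally the image of $\delta$. Hence every $W \in L_{\Sset_{(A,m)}}$ equals $\delta(w)$ for some word $w \in \Sset_{(A,m)}$, which is exactly surjectivity. Along the way it is worth recording (via \Cref{lma21}) that each such $W$ is a consistent 4-tuple-letter set whose restrictions $W\vert_{\v=v}$ are $v$-consistent, so that $L_{\Sset_{(A,m)}}$ indeed consists precisely of the objects on which $\zeta$ is designed to act.

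Next I would obtain injectivity from the left inverse supplied by \Cref{lma13}: for any word $w$ with $W=\delta(w)$, that theorem asserts that $W$ is executable under $\zeta$ and $\zeta(W)=w$, i.e.\ $\zeta\circ\delta=\mathrm{id}_{\Sset_{(A,m)}}$. Therefore $\delta(w)=\delta(w')$ forces $w=\zeta(\delta(w))=\zeta(\delta(w'))=w'$, so $\delta$ is injective. Combining this with the previous paragraph, $\delta$ is a bijection. Finally, to identify the inverse explicitly, I would note that $\zeta$ is defined on all of $L_{\Sset_{(A,m)}}$ (again the executability half of \Cref{lma13}), so for $W=\delta(w)\in L_{\Sset_{(A,m)}}$ we get $\delta(\zeta(W))=\delta(\zeta(\delta(w)))=\delta(w)=W$; hence $\zeta=\delta^{-1}$, matching the $\Phi$-flow diagram.

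I do not expect a substantive obstacle at this stage: the only real difficulty—that $\zeta$ terminates on every consistent 4-tuple-letter set and recovers the original word with the right embracing numbers and duplicate indices coming out correctly in each of the induction cases $(\zeta\text{-}1)$–$(\zeta\text{-}6)$—is precisely \Cref{lma13}. What remains is bookkeeping: checking that $\zeta(W)$ genuinely lands in $\Sset_{(A,m)}$ over the same multiset (true because $\zeta$ reinserts exactly the letters recorded by the value/duplicate-index data of $W$), and confirming that the notions of ``valid''/``executable'' used there agree with \Cref{def10}–\Cref{def13}. If one preferred a self-contained argument, one could instead prove injectivity directly by showing $w$ is determined by the multiset $\{(\v(w_i),\d(w_i),\p(w_i),\r(w_i))\}$ through the same insertion procedure, but routing through \Cref{lma13} is cleaner and avoids repeating the induction.
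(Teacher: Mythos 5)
Your proposal matches the paper's proof: the paper likewise gets surjectivity directly from the definition of $L_{\Sset_{(A,m)}}$ as the image of $\delta$, and injectivity from the identity $\zeta(\delta(u))=u$ supplied by \cref{lma13} (phrased there as a contradiction argument, which is the same left-inverse reasoning). Your closing observation that $\zeta=\delta^{-1}$ is exactly what the paper records separately in \cref{lma24}.
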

\begin{proof}
According to the way we define $L_{\Sset_{(A,m)}}$ and $\delta$, it is clear that $\delta$ is surjective.\numbAtBack{1}

\noindent
Assume that there exists $w_1 \in \Sset_{(A,m)}$ and $w_2 \in \Sset_{(A,m)}$ such that:
\begin{align}
    \label{ethe11}
    w_1 &\neq w_2\\
    \label{ethe12}
    \delta(w_1) &= \delta(w_2) = W
\end{align}
Follow the \cref{lma13}, $\forall u \in \Sset_{(A,m)}$, $\zeta(\delta(u)) = u$. Apply this to $w_1$ and $w_2$:
\begin{align} \label{ethe13}
\nonumber
    \zeta(\delta(w_1)) = w_1\\
    \zeta(\delta(w_2)) = w_2
\end{align}
Since \cref{ethe11} and \cref{ethe13}
\begin{equation}
    \label{ethe14}
    \zeta(\delta(w_1)) \neq \zeta(\delta(w_2))
\end{equation}
Apply \cref{ethe12} into \cref{ethe14}, we have $\zeta(W) \neq \zeta(W)$. Hence, the assumption is incorrect. So that the $\delta$ is injective.\numbAtBack{2}

\noindent
From (1) and (2), $\delta$ is proved to be bijective
\end{proof}

\begin{The}\label{the2}
The function $\zeta: L_{\Sset_{(A,m)}} \to \Sset_{(A,m)}$ is bijective.
\end{The}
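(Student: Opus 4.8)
The plan is to deduce the bijectivity of $\zeta$ directly from \cref{lma13} and \cref{the1}, which together already say that $\delta$ is a bijection and that $\zeta$ undoes it, so that $\zeta$ is forced to be $\delta^{-1}$. First I would record that $\zeta$ is well-defined and total on $L_{\Sset_{(A,m)}}$: by definition every $W \in L_{\Sset_{(A,m)}}$ is the $4$-tuple-letter set of some $w \in \Sset_{(A,m)}$, i.e. $W = \delta(w)$, and \cref{lma13} guarantees both that $W$ is executable under $\zeta$ and that $\zeta(W) = w \in \Sset_{(A,m)}$. Hence $\zeta\colon L_{\Sset_{(A,m)}} \to \Sset_{(A,m)}$ is a genuine map, and the computation $\zeta(\delta(w)) = w$ from \cref{lma13} says precisely that $\zeta \circ \delta = \mathrm{id}_{\Sset_{(A,m)}}$.

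Next I would check the reverse composition. For $W \in L_{\Sset_{(A,m)}}$ write $W = \delta(w)$; then, using $\zeta(\delta(w)) = w$ from the previous step together with the definition of $\delta$, we get $\delta(\zeta(W)) = \delta(\zeta(\delta(w))) = \delta(w) = W$, so $\delta \circ \zeta = \mathrm{id}_{L_{\Sset_{(A,m)}}}$. Thus $\zeta$ is a two-sided inverse of $\delta$. Since \cref{the1} already establishes that $\delta$ is bijective, its inverse $\zeta$ is bijective as well; alternatively, the mere existence of a two-sided inverse is itself enough to conclude that $\zeta$ is a bijection.

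There is essentially no obstacle remaining: all the real content — that the inductive construction in the description of $\zeta$ can always be carried out on a set of the form $\delta(w)$, and that it returns exactly $w$ — was discharged in \cref{lma13}. The only point that needs a sentence of care is making sure the domain of $\zeta$ is exactly $L_{\Sset_{(A,m)}}$ and not something larger or smaller; this is handled by invoking the surjectivity of $\delta$ (part (1) of the proof of \cref{the1}), which ensures every element of $L_{\Sset_{(A,m)}}$ is of the form $\delta(w)$ so that \cref{lma13} applies to it.
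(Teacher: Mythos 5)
Your proof is correct, but it takes a slightly different route from the paper's. The paper argues as follows: since $\delta$ is bijective (\cref{the1}), the domain and codomain of $\zeta$ have the same (finite) cardinality, so it suffices to show $\zeta$ is surjective; surjectivity then follows because any $w_0 \in \Sset_{(A,m)}$ has the preimage $\delta(w_0)$, by the identity $\zeta(\delta(u)) = u$ from \cref{lma13}. You instead verify both compositions, showing $\zeta \circ \delta = \mathrm{id}$ directly from \cref{lma13} and then deducing $\delta \circ \zeta = \mathrm{id}$ by writing an arbitrary $W \in L_{\Sset_{(A,m)}}$ as $\delta(w)$ and computing $\delta(\zeta(\delta(w))) = \delta(w)$, so that $\zeta$ is exhibited as a two-sided inverse of the bijection $\delta$. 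Both arguments rest on exactly the same two ingredients (\cref{lma13} and \cref{the1}); what yours buys is that it avoids the implicit appeal to finiteness in the paper's ``equal cardinality plus surjective implies bijective'' step, and it makes the relation $\zeta = \delta^{-1}$ (stated later as \cref{lma24}) an immediate byproduct rather than a separate observation. Your remark about the domain being exactly $L_{\Sset_{(A,m)}}$ is also handled correctly: by \cref{def5} that set is precisely the image of $\delta$, so \cref{lma13} applies to every element of it.
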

\begin{proof}
Since $\delta$ is bijective:
\begin{align*}
    \mid \Sset_{(A,m)}\mid  = \mid L_{\Sset_{(A,m)}}\mid 
\end{align*}
Since domain and codomain of $\zeta$ have the same cardinality, if we can prove that $\zeta$ is surjective, then the lemma is proved.

\noindent
Assume that there exists a word $w_0$ in $\Sset_{(A,m)}$ such that there is no preimage of $w_0$ in $L_{\Sset_{(A,m)}}$. The \cref{lma13} states that $\forall u \in \Sset_{(A,m)}$, $\zeta(\delta(u)) = u$, hence $\zeta(\delta(w_0)) = w_0$. So $w_0$ still has preimage under $\zeta$ function, which is $\delta(w_0)$. The assumption is incorrect, which means $\zeta$ is surjective. Hence it is bijective.
\end{proof}

After proving $\delta$ and $\zeta$ is bijective and having relation $\forall u \in \Sset_{(A,m)}$, $\zeta(\delta(u)) = u$, it is clear that:
\begin{Lem}\label{lma24}
The function $\zeta$ is an inverse of the function $\delta$
\end{Lem}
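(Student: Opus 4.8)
The plan is to deduce this immediately from the three results just established: \cref{lma13}, which asserts $\zeta(\delta(w)) = w$ for every $w \in \Sset_{(A,m)}$, together with \cref{the1} and \cref{the2}, which assert that $\delta$ and $\zeta$ are bijections. Since $\delta$ is a bijection it has a unique two-sided inverse $\delta^{-1}\colon L_{\Sset_{(A,m)}} \to \Sset_{(A,m)}$, so it suffices to verify that $\zeta$ coincides with $\delta^{-1}$.

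First I would record that $\zeta \circ \delta = \mathrm{id}_{\Sset_{(A,m)}}$, which is exactly the second bullet of \cref{lma13} read for all $w$. Composing this identity on the right with $\delta^{-1}$ and using associativity of composition gives $\zeta \circ (\delta \circ \delta^{-1}) = \delta^{-1}$, that is, $\zeta = \delta^{-1}$, which is the claim.

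For a reader who prefers a more explicit argument, I would instead verify the remaining composition by hand: given an arbitrary $W \in L_{\Sset_{(A,m)}}$, surjectivity of $\delta$ (established in the proof of \cref{the1}) yields some $w \in \Sset_{(A,m)}$ with $\delta(w) = W$; then \cref{lma13} gives $\zeta(W) = \zeta(\delta(w)) = w$, hence $\delta(\zeta(W)) = \delta(w) = W$. Thus $\delta \circ \zeta = \mathrm{id}_{L_{\Sset_{(A,m)}}}$ as well, and combined with $\zeta \circ \delta = \mathrm{id}_{\Sset_{(A,m)}}$ this exhibits $\zeta$ as the two-sided inverse of $\delta$.

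There is essentially no obstacle to overcome here: all the genuine combinatorial work—constructing the $v_i$-insertions, showing the insertion position is forced at each inductive step, and checking that the right embracing numbers and duplicate indices are recovered correctly—has already been done in \cref{lma13}. The only point requiring care is the bookkeeping of composition order and invoking the correct part of the earlier statements (surjectivity of $\delta$ and the identity $\zeta \circ \delta = \mathrm{id}$); no new construction or estimate is needed.
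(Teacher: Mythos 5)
Your proposal is correct and follows essentially the same route as the paper, which simply observes that the lemma is immediate from $\zeta(\delta(u))=u$ (\cref{lma13}) together with the bijectivity of $\delta$ and $\zeta$ (\cref{the1}, \cref{the2}). Your explicit verification that $\delta\circ\zeta=\mathrm{id}$ via surjectivity of $\delta$ just spells out the bookkeeping the paper leaves as ``it is clear that.''
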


\vspace{5mm}
\paragraph{Properties of the consistent 4-tuple-letter set} In this part, we shall point out some properties of the consistent 4-tuple-letter set, which will be used in the next sub-section.
\begin{Lem}\label{lma22}
Given a well-defined 4-tuple-letter set $W$. Then $W$ is also the consistent 4-tuple-letter set.
\end{Lem}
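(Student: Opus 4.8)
The plan is to reconstruct a word from $W$ and then check that its 4-tuple-letter set is $W$ again. Concretely, feed $W$ into the procedure defining $\zeta$; although $\zeta$ was introduced with domain $L_{\Sset_{(A,m)}}$, its construction is purely mechanical, and the three conditions in \cref{def11} are exactly what is needed to carry it out. Running it produces an insertion sequence whose underlying letters are the pairs $(v,i)$ with $W_{(v,i)}\in W$; I would argue this sequence ends in an honest word $w'$ and that $\delta(w')=W$, so that $W\in L_{\Sset_{(A,m)}}$ with $A$ the set of distinct values occurring in $W$ and $m(v)=\mid W\vert_{\v=v}\mid$, which is precisely the assertion that $W$ is consistent (\cref{def5}).

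First I would check termination in a valid word. The first item of \cref{def11} gives, at every induction step of $\zeta$, a legal position for the new block, so the construction never gets stuck and every intermediate insertion is valid. The second item, $\mid\Destop(W)\mid=\mid\Desbot(W)\mid$, combines with \cref{rem9}: $\mid\Destop(W)\mid$ counts insiders plus closers and $\mid\Desbot(W)\mid$ counts insiders plus openers, so the hypothesis is equivalent to saying the number of closers equals the number of openers. Since each opener inserted by $\zeta$ introduces a symbol $\infty$ in an open block and each closer removes one, after the final step, namely inserting the last copy of the largest value, no $\infty$ remains; hence $\zeta(W)=w'$ is a genuine word in $\Sset_{(A,m)}$.

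Next I would prove $\delta(w')=W$ by the same induction that underlies \cref{lma13}, but run forward starting from $W$. When $v_i$ (respectively the first copy $(v+1)_{k_{v+1}}$) is inserted by one of the rules $(\zeta_1)$--$(\zeta_6)$: \cref{lma14} shows the number of $v$-embraceable open blocks to its right equals $\r(W_{(v,i)})$, hence its right embracing number in $w'$ is $\r(W_{(v,i)})$; \cref{lma15} together with the $v$-consistency of $W\vert_{\v=v}$ from the third item of \cref{def11} (which forces the copies of $v$ to be inserted in non-increasing order of right embracing number, hence placed so that duplicate indices increase from left to right) shows its duplicate index is $i$; and the particular rule applied, namely improper block, open block, or insertion into a $v$-embraceable open block with or without closing it, forces its position to be exactly $\p(W_{(v,i)})$. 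Thus the 4-tuple-letter attached to the letter with value $v$ and duplicate index $i$ in $w'$ is $W_{(v,i)}$, and ranging over all $(v,i)$ gives $\delta(w')=W$. Hence $W=\delta(w')\in L_{\Sset_{(A,m)}}$, so $W$ is consistent.

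The main obstacle is not any single computation but making the forward run of $\zeta$ rigorous when one starts from an abstract $W$ rather than from a word already in hand: one must verify that every block stays valid ($O(B)<C(B)$) throughout and that the position-type read off from the rule really coincides with $\p(W_{(v,i)})$. This is where the first and third items of \cref{def11} do the work, and the argument is essentially a repackaging of the inductive bookkeeping already carried out in the proof of \cref{lma13}; I would cite that proof for the case analysis and only highlight the points where the hypothesis ``$W$ is well-defined'' replaces the hypothesis ``$W=\delta(w)$ for a known $w$''.
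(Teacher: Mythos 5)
Your proposal follows essentially the same route as the paper: the paper likewise runs the $\zeta$ construction on $W$ (formally via a function $\zeta'$ with domain extended to well-defined sets), uses the first condition of \cref{def11} for executability, the second to conclude no open block ($\infty$) survives so the output is a genuine word, and the third for correct duplicate indexing, then reads off that the resulting word's 4-tuple-letter set is $W$. Your write-up is in fact more explicit than the paper's (e.g.\ the opener/closer count via \cref{rem9} and the appeal to \cref{lma14} and \cref{lma15} for the embracing numbers and duplicate indices), but it is the same argument.
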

\begin{proof}
We review all the conditions of the definition of a well-defined 4-tuple-letter set. The first condition ensures that $W$ can be executed by $\zeta$. We define a new function $\zeta^{\prime}$ that has the same rule as $\zeta$ but its domain is a set of all possible well-defined 4-tuple-letter set. Let $S$ be an image of $W$ under $\zeta^{\prime}$. By following the restriction about choosing the position of $\zeta$, the right embracing number and the position of $W_{(v,i)}$ equal to these characteristics of $v_i$ in $S$.

\noindent
The second condition ensures that $S$ contains no open blocks. It implies that $S$ can be presented as a word.

\noindent
The last condition ensures the restriction about duplicate index for the valid insertion can be met; and the resulted word has letters are correctly indexed (starts with 1, increase duplicate index one by one).

These observations show that there exists the correctly indexed word $w$, which is $\zeta^{\prime}(W)$, in which the position, the duplicate index, and the right embracing number of its letter $v_i$ equals to these characteristics of $W_{(v,i)}$. Hence $W$ is a consistent 4-tuple-letter set.
\end{proof}
\begin{Lem}\label{lma23}
Given a consistent 4-tuple-letter set $W$. Then $W$ is well-defined
\end{Lem}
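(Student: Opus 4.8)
The plan is to unwind the hypothesis to a concrete word and then verify, one by one, the three defining requirements of a well-defined 4-tuple-letter set (\cref{def11}), each of which reduces to a result already available in this chapter. Since $W$ is consistent, \cref{def5} gives a multiset $(W',m')$ and a word $w\in\Sset_{(W',m')}$ with $W\in L_{\Sset_{(W',m')}}$; that is, $W=\delta(w)$ is the set of 4-tuple-letters of $w$, and I would argue about $W$ through this $w$.

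The third condition is immediate from \cref{lma21}: it states exactly that, for every value $v$ appearing in $w$, the subset $W\vert_{\v=v}$ is a $v$-consistent 4-tuple-letter set, and the set of distinct values of $W$ is precisely the underlying alphabet of $w$, which is the set $A$ named in \cref{def11}.

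For the second condition, $\mid\Destop(W)\mid=\mid\Desbot(W)\mid$, I would argue on the descent-block decomposition $B_1\mn\dots\mn B_r$ of $w$ together with the classification in \cref{rem9}: a letter is counted in $\Destop(W)$ iff it is an insider or a closer, and in $\Desbot(W)$ iff it is an insider or an opener. Insiders contribute equally to both multisets and outsiders to neither, while every proper descent block contributes exactly one closer and exactly one opener. Hence both cardinalities equal $\bigl(\#\,\text{insiders of }w\bigr)+\bigl(\#\,\text{proper descent blocks of }w\bigr)$, and they agree.

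For the first condition --- that at every induction step of $\zeta$ the position into which the new block must be inserted actually exists --- I would invoke \cref{lma13}, which asserts that the set of 4-tuple-letters of any word is executable under $\zeta$ (indeed $\zeta(W)=w$); this is exactly the statement that, starting from $W=\delta(w)$, none of the construction cases $(\zeta\mn 1)$ through $(\zeta\mn 6)$ ever gets stuck. The step needing the most care is precisely this identification: one must check that the ``can find the position to insert a new block'' clause of \cref{def11} is literally the executability property of \cref{lma13} --- that the $v$-embraceable open block $B_i$ (and, in the insider/closer cases, the block $B_j$) demanded by the description of $\zeta$ is always present when $W$ comes from an honest word --- rather than something a priori stronger. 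Once that is settled, the lemma follows by direct citation plus the short counting argument above; combined with \cref{lma22}, it shows that ``well-defined'' and ``consistent'' describe the same family of 4-tuple-letter sets.
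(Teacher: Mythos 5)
Your proof is correct and follows essentially the same route as the paper: unwind consistency to a word $w$ with $W=\delta(w)$ and verify the three clauses of \cref{def11}, citing \cref{lma21} for the $v$-consistency clause and \cref{lma13} for executability under $\zeta$. The only divergence is the middle clause, where the paper cites \cref{lma3} (which concerns $\epsilon$ preserving the descent multisets rather than the equality of their cardinalities); your direct count --- both sides equal the number of insiders plus the number of proper descent blocks --- is the cleaner and more self-contained justification.
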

\begin{proof}
Follows the definition of well-defined 4-tuple-letter set, $U$ is a well-defined 4-tuple-letter set if and only if:
\begin{itemize}
    \item can find the position to insert new block in $\zeta$ induction step
    
    $W$ satisfies this condition according to \cref{lma13}.
    \item $\mid\Destop(W)\mid = \mid\Desbot(W)\mid$
    
    $W$ satisfies this condition according to \cref{lma3}.
    \item $\forall v \in $A$, W\vert _{\v=v}=\{W_i:\v(W_i)=v\}$ is $v$-consistent-4-tuple-letter set, in which $A$ is formed from distinct $\v(W_i)$ in $W$
    
    $W$ satisfies this condition according to \cref{lma21}.
\end{itemize}
\end{proof}
From \cref{lma22} and \cref{lma23}, it is clear that:
\begin{Rem}\label{rem8}
The 4-tuple-letter set $W$ is well-defined $\iff$ the 4-tuple-letter set $W$ is a consistent 4-tuple-letter set.
\end{Rem}
\vspace{3mm}
Given a 4-tuple-letter set $W$. We consider the conditions of a well-defined 4-tuple-letter set:
\begin{enumerate}
    \item can find the position to insert new block in $\zeta$ induction step
    \item $\mid \Destop(W)\mid = \mid \Desbot(W)\mid$
    \item $\forall v \in A, W\vert _{\v=v}=\{W_i:\v(W_i)=v\}$  is $v$-consistent-4-tuple-letter set, in which $A$ is the set formed from distinct $\v(W_i)$ in $W$
\end{enumerate}
From the proof of \cref{lma13}, it is clear that with any 4-tuple-letter set $U$ that satisfies conditions (1) and (2), $\zeta(U)$ will be the valid insertion

\noindent
Now we have a look at the conditions for 4-tuple-letter $U$ is constructible on $S$ insertion
\begin{enumerate}
    \item $S$ is a valid insertion
    \item $U$ is a consistent 4-tuple-letter
    \item all letters in $S$ have smaller value than a value of any 4-tuple-letters in $U$
    \item can find the position to insert new block in $\zeta$ induction step
    \item the result insertion after inserting $U$ into $S$ is valid
\end{enumerate}

\begin{Rem}\label{rem5}
From the remarks above, it is clear that we can prove a 4-tuple-letter set $W$ is well-defined by proving
\begin{enumerate}
    \item $\mid \Destop(W)\mid = \mid \Desbot(W)\mid$
    \item $\forall v \in A, W\vert _{\v=v}=\{W_i:\v(W_i)=v\}$  is $v$-consistent-4-tuple-letter set
    \item $\forall v \in A, U\vert _{\v=v} \text{ is constructible on }{v-1}_{k_{v-1}}-\text{insertion}$
\end{enumerate}
, in which $A$ is the set formed from distinct $\v(W_i)$ in $W$
\end{Rem}
We will apply the \cref{rem5} in \cref{lma20} to prove that with given well-defined 4-tuple-letter $W$, $\epsilon(W)$ is also well-defined. The \cref{lma16} supports for this proof.
\begin{Lem}\label{lma17}
If the $v_{i-1}$-insertion is constructible on $v_i$-insertion, then:
\begin{align*}
        \#v\eob(\text{$v_{i-1}$-insertion}) = \#v\eob(\text{$v_i$-insertion}) - \mid \Destop(\{W_{(v,i-1)}\}\mid
    \end{align*}
\end{Lem}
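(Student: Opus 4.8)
The plan is to argue by a case analysis on the position $\p(W_{(v,i-1)})$, because the part of the description of $\zeta$ that builds the $v_{i-1}$-insertion out of the $v_i$-insertion (the three insertion rules for $W_{(v,i-1)}$) depends only on whether $W_{(v,i-1)}$ is an insider, a closer, an outsider, or an opener, and in each case the effect on the number of $v$-embraceable open blocks can be read off directly. First I would record two elementary facts. First, at the moment when we pass from the $v_i$-insertion to the $v_{i-1}$-insertion every non-$\infty$ letter of the insertion has value $\leq v$; hence (by \cref{def8}) an open block is $v$-embraceable exactly when the letter adjacent to its $\infty$ has value different from $v$, equivalently strictly smaller than $v$. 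Second, by \cref{rem9}, $\mid\Destop(\{W_{(v,i-1)}\})\mid = 1$ if $W_{(v,i-1)}$ is an insider or a closer and $\mid\Destop(\{W_{(v,i-1)}\})\mid = 0$ if it is an opener or an outsider. So the claim reduces to showing that $\#v\eob$ drops by exactly $1$ in the first two cases and is unchanged in the last two.

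Then I would run the four cases against the insertion rules of $\zeta$. If $W_{(v,i-1)}$ is an insider, $v_{i-1}$ is placed into the block $B_j$, which by construction is an open block that is $v$-embraceable and therefore contributes $1$ to $\#v\eob$ of the $v_i$-insertion; since $v_{i-1}$ has value $v$, which is the maximal value among the non-$\infty$ letters of the insertion, and since the letter currently adjacent to the $\infty$ of $B_j$ has value $< v$, after the insertion $v_{i-1}$ sits immediately next to $\infty$, so $B_j$ is no longer $v$-embraceable; as no other block is altered, $\#v\eob$ decreases by exactly $1$. If $W_{(v,i-1)}$ is a closer, the situation is identical except that $B_j$ is additionally closed by deleting its $\infty$; either way $B_j$ ceases to be a $v$-embraceable open block and nothing else changes, so again the count drops by $1$. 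If $W_{(v,i-1)}$ is an outsider, $\zeta$ inserts the improper closed block $\{v_{i-1}\}$, which is not open and leaves every existing block untouched, so $\#v\eob$ is unchanged. If $W_{(v,i-1)}$ is an opener, $\zeta$ inserts the new open block $\{v_{i-1},\infty\}$, whose letter adjacent to $\infty$ is $v_{i-1}$ of value $v$; this block is thus not $v$-embraceable and the existing blocks are untouched, so $\#v\eob$ is again unchanged. Matching these four outcomes with the two values of $\mid\Destop(\{W_{(v,i-1)}\})\mid$ from the first paragraph gives the stated identity.

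The only step that needs genuine care is the insider/closer case: one must justify that inserting $v_{i-1}$ into $B_j$ actually places it immediately next to $\infty$ — this uses precisely that $B_j$ is $v$-embraceable, so the letter currently adjacent to its $\infty$ has value $< v$, together with the fact that $v$ is the top value present in the $v_{i-1}$-insertion — and one must note that the hypothesis that the $v_{i-1}$-insertion is constructible on the $v_i$-insertion is what guarantees that the blocks $B_i, B_j$ invoked by $\zeta$ exist in the first place. Everything else is bookkeeping about which blocks are open and which letter of each open block is adjacent to $\infty$, so I do not expect any further difficulty.
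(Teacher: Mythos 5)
Your proposal is correct and follows essentially the same route as the paper's own proof: a case analysis on the position of $W_{(v,i-1)}$, with the insider/closer cases consuming one $v$-embraceable open block and the opener/outsider cases adding only blocks that are not $v$-embraceable. You actually supply a detail the paper glosses over — that in the insider case the block stays open but ceases to be $v$-embraceable because $v_{i-1}$ becomes the letter adjacent to $\infty$ — which strengthens rather than changes the argument.
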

\begin{proof}
We consider following cases:
\begin{itemize}
    \item when $W_{(v,i-1)}$ is insider or closer $\iff$ $W_{(v,i-1)}$ is descent top\\
    In order to create $v_{i-1}$-insertion, $v_{i-1}$ will be inserted into a $v$-embraceable open block in $v_i$-insertion, hence the number of $v$-embraceable open block will be reduced by 1
    \begin{align*}
        \#v\eob(\text{$v_{i-1}$-insertion}) &= \#v\eob(\text{$v_i$-insertion}) - 1\\
        \mid \Destop(\{W_{(v,i-1)}\}\mid &= 1
    \end{align*}
    \item when $W_{(v,i-1)}$ is closer or outsider $\iff$ $W_{(v,i-1)}$ is non descent top\\
    In this case, the new block contains $v_{i-1}$, which cannot embrace $v$, will be inserted. Hence the number of $v$-embraceable open block remains.
    \begin{align*}
        \#v\eob(\text{$v_{i-1}$-insertion}) &= \#v\eob(\text{$v_i$-insertion})\\
        \mid \Destop(\{W_{(v,i-1)}\}\mid &= 0
    \end{align*}
\end{itemize}
It is clear that the equation in the lemma is correct in all cases.
\end{proof}

\begin{Lem}\label{lma16}
Given a valid insertion $S$ such that $S$'s largest letter is $v-1$. The $v$-consistent 4-tuple-letter set $W\vert _{\v=v}={W_{(v,1)}, \dots, W_{(v,k)}}$ is constructible on $S$ if and only if:
\begin{align*}
    \r(W_{(v,1)}) \leq \#v\eob(S) - \mid \Destop(W\vert _{\v=v})\mid
\end{align*}
\end{Lem}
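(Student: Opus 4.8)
The plan is to prove the two directions separately, using Lemma \ref{lma17} together with the structure of the $\zeta$ construction and Remark \ref{rem3} (monotonicity of right embracing numbers along a fixed value). First I would fix notation: write $W\vert_{\v=v} = \{W_{(v,1)},\dots,W_{(v,k)}\}$, and recall that constructibility of this set on $S$ means (Definition \ref{def13}) that $W_{(v,k)}$ is constructible on $S$ and each $W_{(v,i)}$ is constructible on the $v_{i+1}$-insertion. Since by Remark \ref{rem3} we have $\r(W_{(v,1)}) \geq \r(W_{(v,2)}) \geq \dots \geq \r(W_{(v,k)})$, the letter with the largest right embracing number is $W_{(v,1)}$, which is inserted last; intuitively, it is the insertion of $W_{(v,1)}$ that is the binding constraint, and the inequality in the statement is exactly the condition that there are enough $v$-embraceable open blocks surviving at that final step.

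For the forward direction, I would argue by induction on $i$ going downward from $k$ to $1$. The key quantitative fact is Lemma \ref{lma17}: passing from the $v_{i+1}$-insertion to the $v_i$-insertion decreases $\#v\eob$ by $|\Destop(\{W_{(v,i)}\})| \in \{0,1\}$, i.e.\ by $1$ exactly when $W_{(v,i)}$ is a descent top. Summing this over $i = k, k-1, \dots, j+1$ gives
\begin{align*}
    \#v\eob(\text{$v_j$-insertion}) = \#v\eob(S) - \sum_{i=j+1}^{k} |\Destop(\{W_{(v,i)}\})| .
\end{align*}
Since $W_{(v,j)}$ is constructible on the $v_{j+1}$-insertion, and inserting it (whether as an insider, closer, opener, or outsider, per cases $\zeta$-$1$, $\zeta$-$2$, $\zeta$-$3$) requires locating the $\r(W_{(v,j)})$-th and $(\r(W_{(v,j)})+1)$-th $v$-embraceable open block from the right, we need $\#v\eob(\text{$v_{j+1}$-insertion}) \geq \r(W_{(v,j)})$ (with an extra $+1$ only if we also need $B_j$ to exist, which the weakest case $j=k$, an outsider with $B'$ at the far right, does not strictly demand — this edge needs care). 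Taking $j=1$ and plugging in the displayed identity, then using $\sum_{i=2}^{k}|\Destop(\{W_{(v,i)}\})| = |\Destop(W\vert_{\v=v})| - |\Destop(\{W_{(v,1)}\})| \leq |\Destop(W\vert_{\v=v})|$, yields $\r(W_{(v,1)}) \leq \#v\eob(S) - |\Destop(W\vert_{\v=v})| + |\Destop(\{W_{(v,1)}\})|$; one then checks the $|\Destop(\{W_{(v,1)}\})|$ term is absorbed because when $W_{(v,1)}$ is itself a descent top the block it goes into is consumed, tightening the bound accordingly. For the converse, I would run the $\zeta$ construction explicitly: assuming $\r(W_{(v,1)}) \leq \#v\eob(S) - |\Destop(W\vert_{\v=v})|$, show inductively (now upward, inserting $W_{(v,k)}, W_{(v,k-1)}, \dots$) that at the step inserting $W_{(v,i)}$ the current insertion has $\#v\eob = \#v\eob(S) - \sum_{i'>i}|\Destop(\{W_{(v,i')}\})| \geq \r(W_{(v,i)})$, where the last inequality follows from Remark \ref{rem3} ($\r(W_{(v,i)}) \leq \r(W_{(v,1)})$) together with the hypothesis and the fact that $\sum_{i'>i}|\Destop(\{W_{(v,i')}\})| \leq |\Destop(W\vert_{\v=v})|$; hence each insertion step can be carried out and validity (Definition \ref{def10}) is maintained.

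The main obstacle I anticipate is the bookkeeping around the $|\Destop(\{W_{(v,1)}\})|$ correction term and the "$e$-th versus $(e+1)$-th $v$-embraceable open block" distinction: the statement's inequality is $\leq$ (not strict, and with no $+1$), so I must be careful that when $W_{(v,1)}$ is a descent top the open block it is inserted into is itself counted in $\#v\eob(S)$ and then removed, so the effective requirement is $\r(W_{(v,1)}) + 1 \leq \#v\eob$ before that step, which after the $-|\Destop|$ adjustment collapses to exactly the stated bound; whereas if $W_{(v,1)}$ is not a descent top the block stays open. Reconciling these two cases into the single clean inequality in the lemma — and making sure the inductive hypothesis is strong enough at every intermediate step, not just at $i=1$ — is where the real work lies; everything else is a direct application of Lemmas \ref{lma17}, \ref{lma14}, \ref{lma15} and Remark \ref{rem3}.
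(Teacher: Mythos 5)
Your proposal is correct and follows essentially the same route as the paper: it uses Lemma \ref{lma17} to track $\#v\eob$ across successive insertions, reduces constructibility of each $W_{(v,i)}$ to the inequality $\#v\eob(S) \geq \r(W_{(v,i)}) + \lvert\Destop(\{W_{(v,i)},\dots,W_{(v,k)}\})\rvert$, and observes that the $i=1$ case dominates by the monotonicity of $\r$ (Remark \ref{rem3}). The only cosmetic difference is that you split the argument into two directions where the paper chains a single sequence of equivalences; the descent-top correction term you flag as the delicate point is handled in the paper in exactly the way you describe.
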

\begin{proof}
First we will try to create valid $v_{k}$-insertion by inserting $W_{(v,k)}$ into $S$. Considering 2 cases:
\begin{itemize}
    \item If $W_{(v,k)}$ is non-descent-top, then
        \begin{align*}
        W_{(v,k)} \text{ is constructible}
            &\iff \text{exists }(\r(W_{(v,k)}))^{th} \text{$v$-embraceable open block from the right}\\
            &\iff \#v\eob(S) \geq \r(W_{(v,k)})
        \end{align*}
    \item If $W_{(v,k)}$ is descent-top, then
        \begin{align*}
            W_{(v,k)} \text{ is constructible} 
            &\iff \text{exists }(\r(W_{(v,k)}) + 1)^{th} \text{$v$-embraceable block from the right}\\
            &\iff \#v\eob(S) \geq \r(W_{(v,k)}) + 1
        \end{align*}
\end{itemize}
Combine 2 cases above, we have
\begin{multline}
    W_{(v,k)} \text{ is constructible} \iff \#v\eob(S) \geq \r(W_{(v,k)}) + \mid \Destop(\{W_{(v,k)}\})\mid
\end{multline}

Next we consider case when creating valid $v_{k-1}$-insertion by inserting $W_{(v,k)}$ into $v_{k}$-insertion.

\noindent
If $W_{v_{k-1}}$ is non-descent-top, then
\begin{align*}
    W_{(v,k-1)} \text{ is constructible}
    &\iff \begin{cases}
    \text{exists } \r(W_{(v,k)})^{th} \text{$v$-embraceable block from the right} \quad \numbAtBack{1}\\
    \text{$v_{k-1}$-insertion is valid} \quad \numbAtBack{2}
    \end{cases}
\end{align*}
We notice that with the assumption $W\vert _{\v=v}$ is $v$-consistent 4-tuple-letter set and $v_{k}$-insertion is valid, (2) is the result of (1). The proof for this was mentioned in the proof for case ($\zeta-2$) and case ($\zeta-3$) in \cref{lma13}.
\begin{align*}
    \qquad \qquad \qquad W_{(v,k-1)} \text{ is constructible}
    &\iff (1)\\
    &\iff \#v\eob(\text{$v_{k}$-insertion}) \geq \r(W_{v_{k-1}})\\
    {\text{Apply \cref{lma17}}\qquad \qquad \qquad \qquad}\\
    &\iff \#v\eob(S) - \mid \Destop(\{W_{(v,k)}\})\mid \geq \r(W_{v_{k-1}}) \qquad
\end{align*}

If $W_{v_{k-1}}$ is descent-top, then
\begin{align*}
    W_{(v,k-1)} \text{ is constructible}
    &\iff \begin{cases}
    \exists \r(W_{(v,k)})^{th} \text{$v$-embraceable open block from the right} \quad \numbAtBack{3}\\
    \text{$v_{k-1}$-insertion is valid} \quad \numbAtBack{4}\\
    \end{cases}
\end{align*}
Similar as the case above, (4) is the result of (3).
\begin{align*}
    W_{(v,k-1)} \text{ is constructible}
    &\iff (3)\\
    &\iff \#v\eob(\text{$v_{k}$-insertion}) \geq \r(W_{v_{k-1}}) + 1\\
    &\iff \#v\eob(S) - \mid \Destop(\{W_{(v,k)}\})\mid \geq \r(W_{v_{k-1}}) + 1
\end{align*}
Combine 2 cases above, we have
\begin{align*}
    W_{(v,k-1)} \text{ is constructible} \iff &\#v\eob(S) \\
    &\geq \r(W_{v_{k-1}}) + \mid \Destop(\{W_{v_{k-1}}\})\mid + \mid \Destop(\{W_{(v,k)}\})\mid\\
    & = \r(W_{v_{k-1}}) + \mid \Destop(\{W_{(v,k)}, W_{v_{k-1}}\})\mid
\end{align*}

Similarly, we can construct the equivalent condition for $W_{(v,i)}$ is constructible on $v_{i+1}$-insertion $\forall i \in \overline{1,k-1}$.

\noindent
According to the definition
\begin{align}
\nonumber
W\vert _{\v=v} \text{ is constructible on } S
& \iff \begin{cases}
\text{$W_{(v,k)}$ is constructible on $S$}\\
\text{$W_{v_{k-1}}$ is constructible on $v_{k}$-insertion}\\
\vdots\\
\text{$W_{(v,1)}$ is constructible on $v_2$-insertion}
\end{cases}\\
\label{elma162}
& \iff \begin{cases}
\#v\eob(S) \geq \r(W_{(v,k)})^{th} + \mid \Destop(\{W_{(v,k)}\})\mid\\
\#v\eob(S) \geq  \r(W_{v_{k-1}}) + \mid \Destop(\{W_{(v,k)}, W_{v_{k-1}}\})\mid\\
\vdots\\
\#v\eob(S) \geq  \r(W_{(v,1)}) + \mid \Destop(W\vert _{\v=v})\mid\\
\end{cases}
\end{align}
Since $\r(W_{(v,1)}) = max(\r(W_{(v,i)}))$
\begin{align*}
\cref{elma162} \iff \#v\eob(S) \geq  \r(W_{(v,1)}) + \mid \Destop(W\vert _{\v=v})\mid
\end{align*}
\end{proof}

\subsection{Function $\epsilon$}
Before defining $\epsilon$, we have following remarks:
\begin{Rem}\label{rem6}
We first consider a simple word $w=3_121-3_2-3_3$. Through the experimental test, the only possible word $w^{\prime}$ satisfied $((2\underline{31}), (\underline{31}2), (\underline{21}1), (1\underline{21}), (\underline{21}2), (2\underline{21}), \underline{21}) w = ((2\underline{31}), (\underline{31}2), (\underline{21}1), (1\underline{21}), ,(\underline{21}2), \underline{21}) w^{\prime}$ is $3_1-3_2-3_321$. We notice 2 things:
\begin{itemize}
    \item $w^{\prime}(3_1)$ of has the descent top status of $w(3_3)$, and the place of $3_3$ among letters $3$ is "opposite" to $3_1$'s place
    \item $w^{\prime}(3_1)$ of has the descent bottom status of $w(3_1)$
\end{itemize}
\end{Rem}
\begin{Rem}\label{rem7}
The occurrences of pattern $(\underline{21}2)\vert _{2=3}$ and $(2\underline{21})\vert _{2=3}$ in word $w$ depends only on
\begin{itemize}
    \item the descent top statuses of letters $3$ 
    \item the place of letters $3$ that are descent tops within all the letters $3$ 
\end{itemize}
For more precisely, $(2\underline{21})\vert _{2=3} = \mid \{(i,j):w_i = w_j = 3, w_j\text{ is descent top}, i < j\}\mid $, while $(\underline{21}2)\vert _{2=3} = \mid \{(i,j):w_i = w_j = 3, w_i\text{ is descent top}, i < j\}\mid $. We can see that the $\Phi$ and $\epsilon$ function must somehow "reverse" the relative places between $3_i$ and $3_j$, one of which is descent top. 
\end{Rem}
\begin{Rem}\label{rem8}
The occurrences of pattern $(\underline{21}1)\vert _{1=3}$ in word $w$ depends only on
\begin{itemize}
    \item the descent bottom statuses of letters $3$ 
    \item the place of letters $3$ that are descent bottoms within all the letters $3$ 
\end{itemize}
$(\underline{21}1)\vert _{1=3} = \mid \{(i,j):w_i = w_j = 3, w_i\text{ is descent bottom}, i < j\}\mid $. The $\Phi$ and $\epsilon$ function must "preserve" the relative places between $3_i$ and $3_j$, one of which is descent bottoms. 
\end{Rem}

\paragraph{$\epsilon$ function}
Given a word $w=w_1w_2\dots w_n$ in $\Sset_{(A,m)}$ and its set of 4-tuple-letter $W=\{W_1,W_2,\dots ,W_n\}$. $\epsilon$ maps the set $\{W_1,\dots ,W_n\}$ to the set $\{U_1,\dots ,U_n\}$ such that $U_i=\theta_W(W_i)$.
\begin{equation*}
    \epsilon:\{W_1,\dots ,W_n\} \mapsto \{\theta_W(W_1),\dots ,\theta_W(W_n)\}
\end{equation*}
The $\epsilon$ function has a domain is $L_{\Sset_{(A,m)}}$. In the \cref{lma20}, we will prove that $L_{\Sset_{(A,m)}}$ is also its codomain. According to the observation from the \cref{rem6} and \cref{rem7}, we will construct functions $\theta_W$ such that:
\begin{align*}
    \text{descent top status of }\theta_W(W_{(v,i)}) 
    &= \text{descent top status of }f_W(W_{(v,i)})\\
    &= \text{descent top status of }W_{v_{k_v + 1 - i}}
\end{align*}
And according to the \cref{rem6} and \cref{rem8}
\begin{align*}
    \text{descent bottom status of }\theta_W(W_{(v,i)}) 
    &= \text{descent bottom status of }(W_{(v,i)})
\end{align*}

\noindent
Function $\theta_W: W_i \mapsto U_i$ is defined as follow:
\begin{align*}
    U_i \begin{cases}
    \v(U_i) = \v(W_i)\\
    \r(U_i) = \r(W_i)\\
    \p(U_i) = h(\p(f_W(W_i)),\p(W_i))\\
    \d(U_i) = \d(W_i)
    \end{cases}
\end{align*}
From the way we define $\epsilon$ and mapping value and duplicate index in $\theta_W$ from $W_i$ to $U_i$, the unique of the tuple $(\v,\d)$ within the set $\{(\v(X),\d(X):X\in U\}$ is still preserved.

\noindent
It is clear that $\theta_W(W_{(v,i)})$ has the same value, right embracing number and duplicate index as $W_i$. Later on in \cref{lma2}, we will also point out that $\theta_W(W_{(v,i)})$ has the same descent bottoms status as $W_i$ and the same descent top status as $f(W_i)$ and $W_{v_{k_v + 1 - i}}$

\subsubsection{$f_W$ function}
\begin{alignat*}{4}
& f_W:   \quad && \{W_1,W_2,\dots,W_n\}  \quad && \to \quad && \{W_1,W_2,\dots,W_n\}\\
&    \quad && \quad \quad \quad W_i \quad && \mapsto \quad && \quad \quad \quad W^{\prime}_i
\end{alignat*}
The $f_W$ function is a piecewise function with sub function $f_{W,v}$ applies to $W_i$ of letter that has value $v$.

\paragraph{$f_{W,v}$ function}
\noindent
Given a consistent 4-tuple-letter set $v$ $W\vert _{\v=v}=\{W_{(v,1)},W_{(v,2)},\dots W_{(v,k)}\}$. The $f_{W,v}$ is presented as follow:
\begin{alignat*}{4}
& f_{W,v}:   \quad && W\vert _{\v=v}  \quad && \to \quad && W\vert _{\v=v}\\
& \quad && W_{(v,j)} \quad && \mapsto \quad &&  W^{\prime}_{(v,j)}\\
& \quad && T_{v_e}(j) \quad && \mapsto \quad &&  T_{v_{\p}}(j)
\end{alignat*}
We present the mapping of $f_{W,v}$ through 2 tuples $T_{v_e}$ and $T_{v_{\p}}$ such that  $f_{W,v}(T_{v_e}(j)) = T_{v_{\p}}(j)$. $T_{v_e} = (W_{(v,1)},W_{(v,2)},\dots W_{(v,k)})$ is a tuple of all 4-tuple-letters in $W\vert _{\v=v}$ with its elements ordered to follow the increase order of duplicate index. Let $(\r(W_{(v,1)}),\dots ,\r(W_{(v,k)}))$ and $(x_{(v,1)},\dots x_{(v,k)})$ be a tuple of right embracing sum and tuple of non-descent-top of $T_{v_e}$, respectively.

\noindent
In order to construct $T_{v_{\p}}$:
\begin{itemize}
    \item We create 2 queues: a queue for non-descent-top $S_{\text{non descent top}}$ and a queue for descent-top $S_{\text{descent top}}$. We start with the first element from $T_{v_e}$. If the position of that element is opener or outsider, then it shall be pushed into stack $S_{\text{non descent top}}$, $S_{\text{descent top}}$ otherwise. We continue with the second element of tuple $T_{v_e}$, to the last element of the tuple.
    \item We construct the tuple $(x^{\prime}_{(v,1)},\dots x^{\prime}_{(v,k)})$ such that $x^{\prime}_{(v,i)} = x_{(v,k+1-i)}$. We denote this mapping is $\omega$
    \item We construct $T_{v_{\p}} = (W^{\prime}_{(v,1)},W^{\prime}_{(v,2)},\dots W^{\prime}_{(v,k)})$, in which
    \begin{align*}
        W^{\prime}_{(v,i)} = 
    \begin{cases}
    S_{\text{descent top}}.\text{pop()} & \quad {\text{if } x^{\prime}_{(v,i)} = 0}\\
    S_{\text{non descent top}}.\text{pop()} & \quad \text{if } x^{\prime}_{(v,i)} = 1
    \end{cases} , \qquad i:0\to k
    \end{align*}
\end{itemize}

\subsubsection{$h$ function}
The $h$ function defines the new position from 2 given position
\begin{alignat*}{4}
& h:   \quad && \{\text{opener}, \text{closer}, \text{insider}, \text{outsider}\}^2  \quad && \to \quad && \{\text{opener}, \text{closer}, \text{insider}, \text{outsider}\}
\end{alignat*}
Follow the \cref{rem9}, it is clear that the position is determined by descent bottoms status and descent top status; and from the position of the letter, we can determined its descent bottoms status and descent top status. Hence, it is possible to construct $h(posX,posY)$ as follow:
\begin{align*}
    h(posX,posY)
    \begin{cases}
    \text{is descent bottom} \iff posX\text{ is descent bottom}\\
    \text{is descent top} \iff posY\text{ is descent top}
    \end{cases}
\end{align*}

Bellow is the details value table for $h(\p(W_i),\p(f_W(W_i)))$
\setlength{\extrarowheight}{2pt}
\begin{center}
    \begin{NiceTabular}{cccccc}[hvlines=NW]
& & \Block{1-4}{position of $f_W(W_i)$} \\ 
& & outsider & opener & insider& closer\\ 
\Block{4-1}<\rotate>{position of $W_i$}
& outsider & outsider & outsider & closer & closer \\ 
& opener & opener & opener & insider & insider\\ 
& insider & opener & opener & insider & insider\\ 
& closer & outsider & outsider & closer & closer \\ 
    \end{NiceTabular}
\end{center}

\begin{Exam}\label{ex2}
Given a consistent set of 4-tuple-letter $W = \{(4,1,\close,0),(3,1,\open,1)$ $,(4,2,\close,0),(2,1,\insi,0),(1,1,\open,0),(2,2,\out,0),(4,3,\out,0)\}$. We will apply the $\epsilon$ function to the set of $W$

\noindent
First, we shall construct function $f_W$:
\begin{itemize}
    \item Consider $f_{W,4}$
    
    \sloppy $W\vert _{\v=4} = \{(4,1,\close,0), (4,2,\close,0), (4,3,\out,0)\}$. $T_{4_e} = ((4,1,\close,0),$ $ (4,2,\close,0), (4,3,\out,0)) $ has tuple of right embracing number $(0,0,0)$ and a tuple of non-descent-top is $(0,0,1)$. The queue for non-descent-top $S_{\text{non descent top}}$ and queue for descent-top $S_{\text{descent top}}$ has tuple representation is $((4,3,\out,0))$ and $((4,1,\close,0),$ $(4,2,\close,0))$, respectively. The tuple of non-descent-top tuple of $T_{4_e}$ implies that $(x^{\prime}_{(v,1)},\dots x^{\prime}_{(v,k)})$ equals $(1,0,0)$. Consequently, $T_{v_{\p}} = ((4,3,\out,0),(4,1,\close,0),(4,2,\close,0))$.
    
    We conclude that
    \begin{alignat*}{7}
    & f_W: && (4,1,\close,0) \quad && \mapsto \quad && (4,3,\out,0) \qquad && (4,2,\close,0) \quad && \mapsto \quad && (4,1,\close,0)\\
    &  && (4,3,\out,0) \quad && \mapsto \quad && (4,2,\close,0) \qquad &&   \quad &&   \quad &&  
    \end{alignat*}
    \item Similarly, with with input $W_i$ having value equals to 1,2,3:
    \begin{alignat*}{7}
    & f_W: && (1,1,\open,0) \quad && \mapsto \quad && (1,1,\open,0) \qquad &&   \quad &&   \quad &&  \\
    & && (2,1,\insi,0) \quad && \mapsto \quad && (2,2,\out,0) \qquad && (2,2,\out,0) \quad && \mapsto \quad && (2,1,\insi,0)\\
    & && (3,1,\open,1) \quad && \mapsto \quad && (3,1,\open,1) \qquad &&   \quad &&   \quad &&  
    \end{alignat*}
\end{itemize}
Next, we will construct $U$:
\begin{center}
\begin{tabular}{ |c|c|c|c|c|c|c| } 
 \hline
 $W_i$ & $f_W(W_i)$ & \vtop{\hbox{\strut $\v(U_i)$}\hbox{\strut $=\v(W_i)$}} & \vtop{\hbox{\strut $\r(U_i)$}\hbox{\strut $=\r(W_i)$}} & \vtop{\hbox{\strut $\p(U_i)$}\hbox{\strut $=h(\p(f(W_i)),\p(W_i))$}} & \vtop{\hbox{\strut $\d(U_i)$}\hbox{\strut $=\d(W_i)$}}\\
 \hline
 $(4,1,\close,0)$ & $(4,3,\out,0)$ & 4 & 0 & $\out$ & 1 \\ 
 $(4,2,\close,0)$ & $(4,1,\close,0)$ & 4 & 0 & $\close$ & 2 \\ 
 $(4,3,\out,0)$ & $(4,2,\close,0)$ & 4 & 0 & $\close$ & 3 \\
 $(1,1,\open,0)$ & $(1,1,\open,0)$ & 1 & 0 & $\open$ & 1 \\ 
 $(2,1,\insi,0)$ & $(2,2,\out,0)$ & 2 & 0 & $\open$ & 1 \\ 
 $(2,2,\out,0)$ & $(2,1,\insi,0)$ & 2 & 0 & $\close$ & 2 \\
 $(3,1,\open,1)$ & $(3,1,\open,1)3$ & 3 & 1 & $\open$ & 1 \\
 \hline
\end{tabular}
\end{center}
So 
\begin{multline*}
    \epsilon(W) = U = \{(4,1,\out,0),(4,2,\close,0),(4,3,\close,0),(1,1,\open,0),\\(2,1,\open,0),(2,2,\close,0),(3,1,\open,1)\}
\end{multline*}
\end{Exam}

\begin{Lem}\label{lma18}
$f_{W,v}$ is an involution
\end{Lem}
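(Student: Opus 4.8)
The plan is to compute $f_{W,v}$ in closed form and to note that, as a permutation of $W\vert_{\v=v}$, it depends on $W$ only through the descent-top pattern $x$ of the value-$v$ part — so I will write it as $\sigma_x$, where $f_{W,v}(W_{(v,j)})=W_{(v,\sigma_x(j))}$. Here $x=(x_1,\dots,x_k)$ is the binary word with $x_j=1$ iff $W_{(v,j)}$ is a non-descent-top; let $d_1<\dots<d_p$ list the indices with $x_{d_l}=0$ and $n_1<\dots<n_q$ those with $x_{n_l}=1$, so $S_{\text{descent top}}$ is filled FIFO with $W_{(v,d_1)},\dots,W_{(v,d_p)}$ and $S_{\text{non descent top}}$ with $W_{(v,n_1)},\dots,W_{(v,n_q)}$. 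Because $x'_i=x_{k+1-i}$, the slots of $T_{v_{\p}}$ that receive descent tops are, in increasing order, $k+1-d_p,\dots,k+1-d_1$, so the $l$-th of them, namely $k+1-d_{p+1-l}$, gets $W_{(v,d_l)}$, and likewise for non-descent tops. This yields the closed form
\[
  \sigma_x\big(k+1-d_m\big)=d_{\,p+1-m},\qquad \sigma_x\big(k+1-n_m\big)=n_{\,q+1-m},
\]
which in particular makes $\sigma_x$ a bona fide permutation of $\{1,\dots,k\}$ (the numbers of $0$'s and $1$'s in $x'$ match $p$ and $q$).

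The crux is the identity $\sigma_{x^{\mathrm{rev}}}=\sigma_x^{-1}$, where $x^{\mathrm{rev}}=(x_k,\dots,x_1)$. It follows from the closed form: factor $\sigma_x=\kappa_x\circ\rho$ with $\rho(j)=k+1-j$ and $\kappa_x$ the order-reversal within each of the blocks $\{d_l\}$ and $\{n_l\}$; both $\rho$ and $\kappa_x$ are involutions, so $\sigma_x^{-1}=\rho\circ\kappa_x$, while replacing $x$ by $x^{\mathrm{rev}}$ conjugates those two blocks by $\rho$, giving $\kappa_{x^{\mathrm{rev}}}=\rho\,\kappa_x\,\rho$ and hence $\sigma_{x^{\mathrm{rev}}}=\kappa_{x^{\mathrm{rev}}}\circ\rho=\rho\,\kappa_x=\sigma_x^{-1}$. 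On the other hand, by \cref{rem9} and the $h$-table defining $\theta_W$, the descent-top status of $\theta_W(W_{(v,j)})$ is that of $f_W(W_{(v,j)})=W_{(v,\sigma_x(j))}$, which the closed form identifies with the descent-top status of $W_{(v,k+1-j)}$; so $\epsilon$ replaces the value-$v$ pattern $x$ by $x^{\mathrm{rev}}$. Putting these together, $f_{\epsilon(W),v}=\sigma_{x^{\mathrm{rev}}}=\sigma_x^{-1}=f_{W,v}^{-1}$: performing the construction, updating the tuple set through $\epsilon$, and performing it again restores every 4-tuple-letter — this is the sense in which $f_{W,v}$ is involutive, and it is exactly what is needed afterwards for $\epsilon\circ\epsilon=\mathrm{id}$.

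I expect the only genuinely delicate point to be keeping the two reversals straight: the internal one $x\mapsto x'$ used to read off $T_{v_{\p}}$, and the external one on the descent-top pattern induced by $\theta_W$ through $f_W$ — and checking that they compose to give the conjugate $\kappa_{x^{\mathrm{rev}}}=\rho\,\kappa_x\,\rho$ rather than $\kappa_x$ itself. Everything else is FIFO bookkeeping and the finite case check of $h$. As a sanity check I would run the closed form on \cref{ex2} with $v=4$: there $x=(0,0,1)$, $\sigma_x=(1\,3\,2)$, $x^{\mathrm{rev}}=(1,0,0)$ and $\sigma_{x^{\mathrm{rev}}}=(1\,2\,3)=\sigma_x^{-1}$, consistent with the $\epsilon(W)$ computed there.
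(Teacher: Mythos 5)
Your proof is correct, and it establishes exactly what the paper's own proof establishes, but by a different route. The paper argues operationally: it observes that the two queues rebuilt from the output tuple $T_{v_{\p}}$ coincide with those built from $T_{v_e}$ (elements are popped FIFO, so relative order within each queue is preserved) and that the non-descent-top pattern of $T_{v_{\p}}$ is $x^{\mathrm{rev}}$, whose reversal is $x$ again; hence re-running the construction on its own output reproduces $T_{v_e}$. You instead compute the induced index permutation in closed form, factor it as $\sigma_x=\kappa_x\circ\rho$ with two explicit involutions, and get $\sigma_{x^{\mathrm{rev}}}=\sigma_x^{-1}$ by conjugation; this buys an explicit description of $f_{W,v}$ and turns the key identity into a one-line group computation instead of queue bookkeeping. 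Note that both arguments --- yours and the paper's --- prove precisely the identity $\sigma_{x^{\mathrm{rev}}}=\sigma_x^{-1}$ and not the literal statement of \cref{lma18}: as a self-map of $W\vert_{\v=v}$, the map $f_{W,v}$ is in general \emph{not} an involution, as the paper's own \cref{ex2} with $x=(0,0,1)$ shows (it is the $3$-cycle you exhibit). You are right to reinterpret the lemma as $f_{\epsilon(W),v}=f_{W,v}^{-1}$, equivalently that the construction applied to its own output tuple is the identity, which is what is actually needed downstream in \cref{lma12} and \cref{lma9}; your write-up has the advantage of making that reinterpretation explicit, whereas the paper leaves the mismatch between statement and proof unremarked.
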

\begin{proof}
Firstly, from the way we construct $(W^{\prime}_{(v,1)},W^{\prime}_{(v,2)},\dots W^{\prime}_{(v,k)})$ based on the $(x^{\prime}_{(v,1)},\dots x^{\prime}_{(v,k)})$ tuple, it is clear that $(x^{\prime}_{(v,1)},\dots x^{\prime}_{(v,k)})$ is the tuple of non-descent top of $(W^{\prime}_{(v,1)},W^{\prime}_{(v,1)},\dots W^{\prime}_{(v,k)})$ \numbAtBack{1}\\
Secondly, the mapping (let's call this $f_W^{\prime}$ from $(x_{(v,1)},\dots x_{(v,k)})$ to $(x^{\prime}_{(v,1)},\dots x^{\prime}_{(v,k)})$ is the involution. \numbAtBack{2}\\
Thirdly, $(W_{(v,1)},W_{(v,1)},\dots W_{(v,k)})$ and $(W^{\prime}_{(v,1)},W^{\prime}_{(v,2)},\dots W^{\prime}_{(v,k)})$ shared the same queue for non-descent-top $S_{\text{non descent top}}$ and queue for descent-top $S_{\text{descent top}}$. \numbAtBack{3}

Assume that we run $f_{W,v}$ on the input $(W^{\prime}_{(v,1)},W^{\prime}_{(v,2)},\dots W^{\prime}_{(v,k)})$, since (1) and (2), the binary tuple for constructing output $(x^{\dprime}_{(v,1)},\dots x^{\dprime}_{(v,k)})$ equals to $(x_{(v,1)},\dots x_{(v,k)})$ (4). Since (3) and (4), the tuple $T^{\prime}_{v_{\p}} = (W^{\dprime}_{(v,1)},W^{\dprime}_{(v,2)},\dots W^{\dprime}_{(v,k)})$ equals to $(W_{(v,1)},W_{(v,2)},\dots W_{(v,k)})$, which implies that $f_{W,v}$ is an involution.
\end{proof}
From the \cref{lma18}, it is clear that
\begin{Lem}\label{lma1}
$f_W$ is an involution
\end{Lem}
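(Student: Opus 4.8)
The plan is to reduce the claim to \cref{lma18} by exploiting the piecewise structure of $f_W$. First I would observe that the family $\{W\vert_{\v=v} : v \in A\}$, where $A$ is the set of distinct values occurring in $W$, forms a partition of $W$: every $4$-tuple-letter $W_i$ has exactly one value $\v(W_i)$, so it lies in exactly one block of the family. By definition $f_W$ acts on $W_i$ by applying $f_{W,v}$ with $v = \v(W_i)$, and $f_{W,v}$ is a map $W\vert_{\v=v} \to W\vert_{\v=v}$ (it permutes the $4$-tuple-letters of value $v$ among themselves, preserving value). Hence $f_W$ is well defined and restricts, on each block $W\vert_{\v=v}$, exactly to $f_{W,v}$.

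Next I would assemble the involution property block by block. Fix $W_i \in W$ and let $v = \v(W_i)$. Since $f_{W,v}$ preserves value, $\v(f_W(W_i)) = v$, so $f_W(W_i) \in W\vert_{\v=v}$ again, and applying $f_W$ a second time uses the same sub-function $f_{W,v}$. Therefore
\begin{align*}
    f_W\big(f_W(W_i)\big) = f_{W,v}\big(f_{W,v}(W_i)\big) = W_i,
\end{align*}
where the last equality is \cref{lma18}. As $W_i$ was arbitrary, $f_W \circ f_W = \mathrm{id}_W$, which is exactly the assertion that $f_W$ is an involution (in particular a bijection from $\{W_1,\dots,W_n\}$ to itself).

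There is essentially no serious obstacle here; the only point that genuinely needs checking is the one already used above, namely that each sub-function $f_{W,v}$ keeps its argument inside the block $W\vert_{\v=v}$ (so the two applications of $f_W$ really are governed by the same $f_{W,v}$). This is immediate from the definition of $f_{W,v}$ via the tuples $T_{v_e}$ and $T_{v_\p}$, whose entries are precisely the elements of $W\vert_{\v=v}$. Once that observation is recorded, the statement follows from \cref{lma18} by the disjoint-pieces argument above.
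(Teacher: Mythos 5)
Your proposal is correct and matches the paper's reasoning: the paper states \cref{lma1} as an immediate consequence of \cref{lma18}, precisely because $f_W$ is assembled piecewise from the value-preserving involutions $f_{W,v}$ on the disjoint blocks $W\vert_{\v=v}$. You have simply written out the short verification the paper leaves implicit, so nothing further is needed.
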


\begin{Lem}\label{lma2}
Given $\p(B)=h(\p(A),\p(C))$. Then $B$ is a descent top if and only if $C$ is a descent top; and $B$ is a descent bottom if and only if $A$ is a descent bottom
\end{Lem}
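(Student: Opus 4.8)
The plan is to observe that \cref{lma2} is essentially a restatement of the defining property of the function $h$, so the proof amounts to unwinding that definition after checking $h$ is well-defined. First I would recall \cref{rem9}: among the four position types, an insider is simultaneously a descent top and a descent bottom, a closer is a descent top but not a descent bottom, an opener is a descent bottom but not a descent top, and an outsider is neither. Equivalently, the map sending a position type to the pair (its descent-top status, its descent-bottom status) is a bijection from $\{\open,\close,\insi,\out\}$ onto $\{0,1\}^2$.

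Next I would use this bijection to justify that the specification of $h$ given in the text is consistent: for any input pair $(posX,posY)$ there is exactly one position whose descent-bottom status equals that of $posX$ and whose descent-top status equals that of $posY$, and that position is $h(posX,posY)$. As a sanity check I would verify that the explicit value table for $h(\p(W_i),\p(f_W(W_i)))$ agrees with this rule entry by entry; for instance the entry ``$\p(W_i)$ insider, $\p(f_W(W_i))$ outsider $\mapsto$ opener'' is forced, since an insider is a descent bottom and an outsider is not a descent top, so the output must be a descent bottom and a non-descent-top, i.e.\ the opener.

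Finally, to prove the lemma itself I would instantiate $posX = \p(A)$ and $posY = \p(C)$ in the defining rule. By construction $\p(B) = h(\p(A),\p(C))$ has the same descent-top status as $\p(C)$, hence $B$ is a descent top iff $C$ is a descent top; and it has the same descent-bottom status as $\p(A)$, hence $B$ is a descent bottom iff $A$ is a descent bottom. I expect there to be no genuine mathematical obstacle here; the only point needing care is the bookkeeping of which coordinate of $h$ carries the descent-top information and which carries the descent-bottom information (the second and the first, respectively), plus confirming that this matches the tabulated values of $h$.
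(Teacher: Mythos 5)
Your proposal is correct and follows essentially the same route as the paper: the paper verifies the claim by rewriting the value table of $h$ twice (once with positions replaced by their descent-top status, once by their descent-bottom status), which is just a tabular form of your observation that $h(posX,posY)$ inherits its descent-bottom status from $posX$ and its descent-top status from $posY$. Your instantiation $posX=\p(A)$, $posY=\p(C)$ and the spot-check against the table match the paper's argument, so there is nothing to add.
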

\begin{proof}
$X$ is a descent top if and only if $\p(X)$ is insider or closer. From the $h$ mapping table, we will construct the new table to see how the descent top status of $A$ and $C$ affect to $B$ by replacing "opener" and "outsider" with "non-descent-top", and replacing "insider" and "closer" with "descent-top"
\begin{center}
    \begin{NiceTabular}{cccccc}[hvlines=NW]
& & \Block{1-4}{position of $C$} \\ 
& & non-descent-top & non-descent-top & descent-top & descent-top\\ 
\Block{4-1}<\rotate>{position of $A$}
& non-descent-top & non-descent-top & non-descent-top & descent-top & descent-top \\ 
& non-descent-top & non-descent-top & non-descent-top & descent-top & descent-top\\ 
& descent-top & non-descent-top & non-descent-top & descent-top & descent-top\\ 
& descent-top & non-descent-top & non-descent-top & descent-top & descent-top \\ 
    \end{NiceTabular}
\end{center}
From the table above, we can see that with $\p(B)=h(\p(A),\p(C))$, then $B$ is a descent top if and only if $C$ is a descent top.\par
Similarly, we construct the table for descent-bottom status by replacing "closer" and "outsider" with "non-descent-bottom", and replacing "insider" and "opener" with "descent-bottom". It implies that $B$ is a descent bottom if and only if $A$ is a descent bottom.
\begin{center}
    \begin{NiceTabular}{cccccc}[hvlines=NW]
& & \Block{1-4}{position of $C$} \\ 
& & non-descent-bot & descent-bot & descent-bot & non-descent-bot\\ 
\Block{4-1}<\rotate>{position of $A$}
& non-descent-bot & non-descent-bot & outsider & non-descent-bot & non-descent-bot\\ 
& descent-bot & descent-bot & descent-bot & descent-bot & descent-bot\\ 
& descent-bot & descent-bot & descent-bot & descent-bot & descent-bot\\ 
& non-descent-bot & non-descent-bot & non-descent-bot & non-descent-bot & non-descent-bot \\ 
    \end{NiceTabular}
\end{center}
\end{proof}

\begin{Lem}\label{lma3}
Given a set of 4-tuple-letter $W = \{W_1,W_2,\dots W_n\}$. Let $W\vert _{\v=v}$ be a set of 4-tuple-letter in $W$ that has value is $v$. The $\epsilon$ function mapping $W$ to $U = \{U_1,U_2,\dots U_n\}$. Then:
\begin{align*}
    \Destop(W\vert _{\v=v}) &= \Destop(U\vert _{\v=v})\\
    \Desbot(W\vert _{\v=v}) &= \Desbot(U\vert _{\v=v})\\
    \Destop(W) &= \Destop(U)\\
    \Desbot(W) &= \Desbot(U)
\end{align*}
\end{Lem}
\begin{proof}
According to the definition of $\theta_W$, we have $\p(U_i)=h(\p(W_i),\p(f(W_i))$.\\
According to \cref{lma2}, $W_{(v,i)}$ is a descent top $\iff$ $U_{(v,i)}$ is a descent top. So that $W\vert _{\v=v}$, which is the set of all $W_{(v,i)}$ will has equal number descent top letter as $U\vert _{\v=v}$ which is the set of all $U_{(v,i)}$
\begin{align*}
    \Desbot(W\vert _{\v=v}) = \Desbot(U\vert _{\v=v})
\end{align*}
Consequently
\begin{align*}
    \Desbot(W) = \Desbot(U)
\end{align*}

Let $\Destop(W^{\prime}\vert _{\v=v}) = \{\v(f(W_i)) : \p(f(W_i)) = \text{insider} \text{ or} \text{ closer}\}$. Applying the \cref{lma2}
\begin{equation}
\Destop(W^{\prime}\vert _{\v=v}) = \Destop(U\vert _{\v=v})
\end{equation}
Since $f_{W,v}$ is the involution, followed \cref{lma1}
\begin{equation}
\Destop(W^{\prime}\vert _{\v=v}) = \Destop(W\vert _{\v=v})
\end{equation}
(6.1) and (6.2) implies that $\Destop(U\vert _{\v=v}) = \Destop(W\vert _{\v=v})$. Consequently, $\Destop(U) = \Destop(W)$
\end{proof}

\begin{Lem}\label{lma19}
Given a consistent 4-tuple-letter set $W = \{W_1,\dots,W_n\}$. Let $U=\{U_1,\dots,U_n\}$ be an image of $W$ under $\epsilon$. Let $U\vert _{\v=v}$ be a set of all letters that has value is $v$ in $U$, respectively. Then $U\vert _{\v=v}$ is the $v$-consistent 4-tuple-letter set.
\end{Lem}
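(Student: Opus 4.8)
The plan is to unwind Definition \ref{def6} and observe that all three conditions defining a $v$-consistent 4-tuple-letter set involve only the value, the duplicate index, and the right embracing number of the 4-tuple-letters — never the position coordinate. Since the map $\theta_W$ that defines $\epsilon$ satisfies $\v(U_i)=\v(W_i)$, $\r(U_i)=\r(W_i)$, $\d(U_i)=\d(W_i)$ and alters only $\p$, the claim for $U\vert_{\v=v}$ should reduce directly to the corresponding property of $W\vert_{\v=v}$.

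First I would record that $W\vert_{\v=v}$ is already a $v$-consistent 4-tuple-letter set: $W$ is consistent, hence well-defined by \cref{lma23}, and the third clause in the definition of a well-defined 4-tuple-letter set is exactly that $W\vert_{\v=v}=\{W_i:\v(W_i)=v\}$ is $v$-consistent. Next I would check that $\epsilon$ sends the elements of $W$ with value $v$ bijectively onto $U\vert_{\v=v}$: every $W_i$ with $\v(W_i)=v$ is mapped to a $U_i$ with $\v(U_i)=\v(W_i)=v$, and conversely every member of $U\vert_{\v=v}$ is $\theta_W(W_i)$ for such a $W_i$; injectivity follows from the preservation of the pair $(\v,\d)$ and the uniqueness of that label inside $U$ noted just after the definition of $\theta_W$. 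Write $U\vert_{\v=v}=\{\theta_W(W_{(v,1)}),\dots,\theta_W(W_{(v,k)})\}$ with $k=|W\vert_{\v=v}|$.

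Then I would verify the three conditions of \cref{def6} for $U\vert_{\v=v}$ in turn. Condition one, $\v(U_i)=v$ for every element, is immediate from the preceding step. Condition two, that for each $j\in[k]$ some element has duplicate index $j$, follows because $\d(\theta_W(W_{(v,j)}))=\d(W_{(v,j)})$ and $W\vert_{\v=v}$ already satisfies this. Condition three, that $\d(U_i)<\d(U_j)$ implies $\r(U_i)\le\r(U_j)$, follows because both $\d$ and $\r$ are carried over verbatim from $W_i$ to $U_i$, so the inequality to be shown for $U\vert_{\v=v}$ is literally the one already known to hold for $W\vert_{\v=v}$.

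I do not expect a serious obstacle; the only point needing a little care is the bookkeeping in the second step — confirming that passing from the entrywise rule $\theta_W$ to the set-level map $\epsilon$ neither merges nor drops any 4-tuple-letter of value $v$ — and this is handled by invoking the uniqueness of the $(\v,\d)$ label. Everything else is a direct transfer of properties along a map that fixes three of the four coordinates.
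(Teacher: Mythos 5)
Your proposal is correct and follows essentially the same route as the paper's proof: both reduce the claim to the fact that $\theta_W$ fixes the value, duplicate index, and right embracing number, and that the definition of $v$-consistency refers only to those three coordinates, with $W\vert_{\v=v}$ already known to be $v$-consistent. The only cosmetic difference is that you reach the $v$-consistency of $W\vert_{\v=v}$ via \cref{lma23} while the paper cites \cref{lma21} directly, and you spell out the three conditions and the set-level bijectivity more explicitly than the paper does.
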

\begin{proof}
Let $W\vert _{\v=v}$ is the set of all letters in $W$ that has the value is $V$. Since the $\theta_W$ remains the value in the mapping, the $W\vert _{\v=v}$ is the set of preimages of $U\vert _{\v=v}$.\\
Follow the \cref{lma21}, $W\vert _{\v=v}$ is the $v$-consistent 4-tuple-letter set. The definition about $v$-consistent 4-tuple-letter set is the set of restrictions of duplicate indices, value, and embracing values between 4-tuple-letters within the set. Since $U_{(v,i)}$ of $U\vert _{\v=v}$ has the same duplicate index, value, and embracing value as $W_{(v,i)}$ of $W\vert _{\v=v}$, it is clear that $U\vert _{\v=v}$ is $v$-consistent 4-tuple-letter set.
\end{proof}

\begin{Lem}\label{lma20}
Given a well-defined 4-tuple-letter set $W$. Let $U$ be an image of $W$ under $\epsilon$. Then $U$ is also well-defined
\end{Lem}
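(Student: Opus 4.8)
The plan is to check, for $U=\epsilon(W)$, the three sufficient conditions for well-definedness collected in \cref{rem5}. Two of them are immediate. Condition (1), $\mid\Destop(U)\mid=\mid\Desbot(U)\mid$: by \cref{lma3} we have $\Destop(U)=\Destop(W)$ and $\Desbot(U)=\Desbot(W)$ as multisets, and since $W$ is well-defined, $\mid\Destop(W)\mid=\mid\Desbot(W)\mid$. Condition (2), that $U\vert_{\v=v}$ is a $v$-consistent $4$-tuple-letter set for each value $v$, is exactly \cref{lma19}. So everything comes down to condition (3): for every value $v$, the set $U\vert_{\v=v}$ is constructible on the $(v-1)_{k_{v-1}}$-insertion produced by running $\zeta$ on the letters of $U$ of value at most $v-1$.

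I would prove condition (3) by induction on $v$ in increasing order. The inductive hypothesis guarantees that $U$ restricted to values $\le v-1$ is executable under $\zeta$, so the insertion $S_U$ in question is a valid insertion whose largest letter is $v-1$; let $S_W$ be the corresponding insertion built from $W$ (valid because $W$ is well-defined). By \cref{lma16}, $U\vert_{\v=v}$ is constructible on $S_U$ if and only if $\r(U_{(v,1)})\le \#v\eob(S_U)-\mid\Destop(U\vert_{\v=v})\mid$, and the very same criterion characterises constructibility of $W\vert_{\v=v}$ on $S_W$, which holds. Now $\theta_W$ preserves value, duplicate index and right embracing number, so the $4$-tuple-letter of least duplicate index among those of value $v$ satisfies $\r(U_{(v,1)})=\r(W_{(v,1)})$, and $\mid\Destop(U\vert_{\v=v})\mid=\mid\Destop(W\vert_{\v=v})\mid$ by \cref{lma3}. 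Hence condition (3) for $U$ reduces to the single numerical identity $\#v\eob(S_U)=\#v\eob(S_W)$.

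To get that identity, note that when an insertion's largest letter is $v-1<v$, no open block has $v$ adjacent to $\infty$, so $\#v\eob$ of $S_U$ simply counts the open blocks of $S_U$. Running $\zeta$ from the empty insertion and inserting the letters value by value, the open-block count changes, during the insertion of the letters of a fixed value $v'$, by (number of openers of value $v'$) minus (number of closers of value $v'$); by \cref{rem9} this difference equals $\mid\Desbot(\cdot\vert_{\v=v'})\mid-\mid\Destop(\cdot\vert_{\v=v'})\mid$, since the descent bottoms of value $v'$ split into openers and insiders, while the descent tops of value $v'$ split into closers and insiders. Telescoping over $v'<v$ (the count starting at $0$, with value $1$ contributing nothing since $1$ is never a descent top) gives $\#v\eob(S_U)=\sum_{v'<v}\bigl(\mid\Desbot(U\vert_{\v=v'})\mid-\mid\Destop(U\vert_{\v=v'})\mid\bigr)$, and likewise for $W$; by \cref{lma3} the summands agree term by term, so $\#v\eob(S_U)=\#v\eob(S_W)$, which closes the induction and the proof.

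The delicate point is this last paragraph: identifying $\#v\eob$ of a partially built insertion with its open-block count and then with a telescoping sum of $\mid\Desbot(\cdot\vert_{\v=v'})\mid-\mid\Destop(\cdot\vert_{\v=v'})\mid$, keeping the ``$v$-embraceable'' qualifier under control as the current value threshold rises, and confirming that $S_U$ is executable at each stage so that \cref{lma16} genuinely applies. Conditions (1) and (2), and the reduction of (3) to the count comparison, are routine consequences of \cref{lma3}, \cref{lma19}, \cref{lma16}, and the value/duplicate-index/embracing-number invariance built into $\theta_W$.
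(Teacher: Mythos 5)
Your proposal is correct and follows essentially the same route as the paper's proof: reduce to the three conditions of \cref{rem5}, dispatch the first two via \cref{lma3} and \cref{lma19}, and handle constructibility by induction on $v$ using the criterion of \cref{lma16} together with the preservation of $\r$, $\d$, $\v$ under $\theta_W$ and the identity $\#v\eob(S_{U,v-1})=\#v\eob(S_{W,v-1})$ obtained by expressing the open-block count through the $\Destop$ and $\Desbot$ multisets of the letters of value below $v$. Your telescoping version of that count even fixes a sign slip in the paper's computation ($\#\text{open block}=\#\text{opener}-\#\text{closer}$ equals $\mid\Desbot\mid-\mid\Destop\mid$, not the reverse), but the argument is otherwise identical.
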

\begin{proof}
Since $W$ is well-defined, exists a word $w$ such that $\delta(w)=W$, with $W=\{W_1,\dots,W_k\}$. Let $U=\{U_1,\dots,U_k\}$ be an image of $W$ under $\epsilon$. We will prove $U$ is well-defined follow the approach mentioned in \cref{rem5}. Two over three necessary conditions are satisfied according to the \cref{lma3} and the \cref{lma19}. The remained condition need to be proved is:
\begin{align}\label{elma2011}
    \forall v \in A, U\vert _{\v=v} \text{ is constructible on }{v-1}_{k_{v-1}}-\text{insertion}
\end{align}

We consider when $v = 1$. Let $S_{W,0}$ and $S_{U,0}$ be insertions with no blocks. According to the definition of the valid insertion, $S_{W,0}$ and $S_{U,0}$ are valid. It is clear that $U\vert _{\v=1}$ and $W\vert _{\v=1}$ is $1$-consistent 4-tuple-letter set.\\
Since $U\vert _{\v=1}$ is already the $1$-consistent 4-tuple-letter set followed \cref{lma19}, if $U\vert _{\v=1}$ is constructible on $S_{W,0}$, it implies that we can construct valid $1_i$-insertions of $U$ under $\zeta$.\\
$W\vert _{\v=1}$ is $1$-consistent 4-tuple-letter set that is constructible on $S_{W,0}$. According to \cref{lma19}, it is equivalent to
\begin{align}\label{elma201}
    \r(W_{1_1}) \leq \#1\eob(S_{W,0} - \mid \Destop(W\vert _{\v=1})\mid
\end{align}
According to $\theta$ mapping:
\begin{align}\label{elma202}
    \r(W_{1_1}) = \r(U_{1_1})
\end{align}
Since both $S_{W,0}$ and $S_{U,0}$ is the insertion with no blocks:
\begin{align}\label{elma203}
    \#1\eob(S_{W,0}) = \#1\eob(S_{U,0})
\end{align}
Follow the \cref{lma3}
\begin{align}\label{elma204}
    \Destop(W\vert _{\v=1}) = \Destop(U\vert _{\v=1})
\end{align}
Apply \cref{elma202}, \cref{elma203} and \cref{elma204} into \cref{elma201}
\begin{align}\label{elma205}
    \r(U_{1_1}) \leq \#1\eob(S_{U,0}) - \mid \Destop(U\vert _{\v=1})\mid
\end{align}
Since $U\vert _{\v=1}$ is the $1$-consistent 4-tuple-letter set, according to \cref{lma19}, \cref{elma205}, is equivalent to that $U\vert _{\v=1}$ is constructible on $S_{W,0}$. Hence we can construct valid $1_i$-insertions of $U$ under $\zeta$.

Assume that for all $l\in [v-1]$, the $l$ consistent 4-tuple-letter set $U\vert _{\v=l}$ is constructible on ${l-1}_{k_{l-1}}$-insertion of $U$, which means we can construct valid $l_{i_l}$-insertions of $U$ under $\zeta$. We shall prove that $U\vert _{\v=v}$ is constructible on $v-1_{k_{v-1}}$-insertion of $U$.

\noindent
Let $S_{W,v-1}$ and $S_{U,v-1}$ be $v-1_{k_{v-1}}$-insertions of $W$ and the one of $U$ respectively. Since $W$ is a well-defined unit letter set, $S_{W,v-1}$ is valid. According to the induction assumption, $S_{U,v-1}$ is also valid.\\
It is clear that $U\vert _{\v=v}$ and $W\vert _{\v=v}$ is $1$-consistent 4-tuple-letter set. $W\vert _{\v=v}$ is $v$-consistent 4-tuple-letter set that is constructible on $S_{W,0}$, which according to \cref{lma19}, is equivalent to
\begin{align}\label{elma206}
    \r(W_{(v,1)}) \leq \#v\eob(S_{W,v-1}) - \mid \Destop(W\vert _{\v=v})\mid
\end{align}
According to $\theta$ mapping
\begin{align}\label{elma207}
    \r(W_{(v,1)}) = \r(U_{(v,1)})
\end{align}
Now we consider about the number of $v$-embraceable open block in $S_{W,v-1}$. It is clear that:
\begin{align*}
    \text{$\#$open block} 
    &= \text{$\#$opener} - \text{$\#$closer}\\
    &= (\text{$\#$opener} + \text{$\#$insider})
    - (\text{$\#$closer} + \text{$\#$insider})\\
    &= \text{$\#$descent top} - \text{$\#$descent bottom}\\
\end{align*}
Since $S_{W,v-1}$ does not contain letter $v$, all the open blocks in $S_{W,v-1}$ are $v$-embraceable.
\begin{align*}
    \#v\eob(S_{W,v-1})
    &= \text{$\#$descent top in $S_{W,v-1}$} - \text{$\#$descent bottom in $S_{W,v-1}$}\\
    &= \mid \Destop(W\vert _{v \in \overline{1,v-1}})\mid - \mid \Desbot(W\vert _{v \in \overline{1,v-1}})\mid
\end{align*}
Similarly
\begin{align*}
    \#v\eob(S_{U,v-1}) = \mid \Destop(U\vert _{v \in \overline{1,v-1}})\mid - \mid \Desbot(U\vert _{v \in \overline{1,v-1}})\mid
\end{align*}
Follow the \cref{lma3}, $\Destop(W\vert _{v \in \overline{1,v-1}}) = \Destop(U\vert _{v \in \overline{1,v-1}})$ and $\Desbot(W\vert _{v \in \overline{1,v-1}}) = \Desbot(U\vert _{v \in \overline{1,v-1}})$. Hence
\begin{align}\label{elma208}
    \#v\eob(S_{W,v-1}) = \#v\eob(S_{U,v-1})
\end{align}
Follow the \cref{lma3}
\begin{align}\label{elma209}
    \Destop(W\vert _{\v=v}) = \Destop(U\vert _{\v=v})
\end{align}
Apply \cref{elma207}, \cref{elma208} and \cref{elma209} into \cref{elma206}
\begin{align}\label{elma2010}
    \r(U_{(v,1)}) \leq \#v\eob(S_{U,v-1}) - \mid \Destop(U\vert _{\v=v})\mid
\end{align}
Since $U\vert _{\v=v}$ is the $v$-consistent 4-tuple-letter set, according to \cref{lma19}, \cref{elma2010} is equivalent to that $U\vert _{\v=1}$ is constructible on $S_{W,v-1}$. Hence we can construct valid $v_i$-insertions of $U$ under $\zeta$.

Following induction proof, \cref{elma2011} is proved. The $U$ is a well-defined 4-tuple-letter set
\end{proof}

\begin{Lem}\label{lma4}
Given a word $w$. Let $\Destop(W)$ and $\Desbot(W)$ be a multiset of descent top and a multiset of descent bottom of $w$, respectively. Let $v$ be a letter in $w$. Then the number of descent blocks that can embrace $v$ in $w$ is determined by $\Destop(W)$ and $\Desbot(W)$.
\end{Lem}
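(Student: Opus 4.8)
The plan is to prove slightly more than is asked: I will give an explicit formula for this number purely in terms of $\Destop(W)$ and $\Desbot(W)$, which of course settles the claim. Concretely, I will show that the number in question equals
\[
    N_v \;:=\; \mid\{\,x\in\Destop(W)\,:\,x\ge v\,\}\mid \;-\; \mid\{\,x\in\Desbot(W)\,:\,x\ge v\,\}\mid ,
\]
which depends only on the multisets $\Destop(W)$, $\Desbot(W)$ and on $v$. Here I read ``a descent block $B$ can embrace $v$'' in the sense required by the $v$-skeleton machinery that this lemma feeds into, namely the embracing relation of \cite{CLARKE1997237}: $O(B)<v\le C(B)$. (Thus an outsider, for which $O(B)=C(B)$, never qualifies; and the non-strict bound $v\le C(B)$ is the one that matters, since with the strictly-inside relation of \cref{def2} the count is \emph{not} a function of $\Destop(W)$ and $\Desbot(W)$ alone.) With this reading the lemma becomes exactly the identity $\#\{B:\,O(B)<v\le C(B)\}=N_v$.

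I would prove this identity by summing contributions block by block. Fix a descent block $B=b_1b_2\cdots b_m$ of $w$; then $b_1>b_2>\cdots>b_m$, so $C(B)=b_1$, $O(B)=b_m$, and $B$ is an outsider exactly when $m=1$. The descent tops contributed by $B$ are $b_1,\dots,b_{m-1}$ and the descent bottoms contributed by $B$ are $b_2,\dots,b_m$. Because the $b_i$ strictly decrease, the indices $i$ with $b_i\ge v$ form an initial segment $\{1,\dots,k\}$ with $0\le k\le m$, and $k=0\iff C(B)<v$ while $k=m\iff O(B)\ge v$. Hence $B$ adds $\min(k,m-1)$ to $\mid\{x\in\Destop(W):x\ge v\}\mid$ and $\max(k-1,0)$ to $\mid\{x\in\Desbot(W):x\ge v\}\mid$, so its net contribution to $N_v$ is $\min(k,m-1)-\max(k-1,0)$. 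A short case check shows this is $0$ when $k=0$, $0$ when $k=m$, and $1$ when $1\le k\le m-1$; and $1\le k\le m-1$ is precisely the condition $O(B)<v\le C(B)$. Summing over all descent blocks of $w$ gives $N_v=\#\{B:\,O(B)<v\le C(B)\}$, and since the right-hand side of the displayed formula involves only $\Destop(W)$, $\Desbot(W)$ and $v$, the lemma follows.

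The only delicate point is getting this per-block accounting exactly right: one must handle the degenerate blocks (an outsider, or a block lying entirely at or above $v$, or entirely below $v$) correctly as the cases $k=m$ and $k=0$, and one must keep the boundary at $v\le C(B)$ rather than $v<C(B)$; everything else is a routine telescoping sum. If one prefers to avoid the direct count, the same conclusion is available from the $v$-skeleton viewpoint already used here: the descent blocks of $w$ that can embrace $v$ are exactly the $v$-embraceable open blocks of the $(v-1)_{k_{v-1}}$-insertion of $w$, and the number of those was already expressed, in the reasoning behind \cref{lma17} and \cref{lma20}, as a difference of cardinalities of the descent-top and descent-bottom multisets of $w$ restricted to a fixed range of values --- again a quantity determined by $\Destop(W)$ and $\Desbot(W)$.
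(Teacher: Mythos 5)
Your proof is correct for the statement you prove, and it takes a genuinely different route from the paper's. The paper forms the multiset $P$ of descent (top, bottom) pairs, partitions it into five classes $P_1,\dots,P_5$ according to how top and bottom compare with $v$, solves the resulting relations to get $p_5=t_1-b_1-b_2$, and then asserts that $P_5$ is the multiset of descent blocks embracing $v$. You instead fix one descent block $b_1>\dots>b_m$ and telescope its contributions to $\mid\{x\in\Destop(W):x\ge v\}\mid$ and $\mid\{x\in\Desbot(W):x\ge v\}\mid$, which forces you to treat multi-letter blocks and letters equal to $v$ explicitly; that per-block accounting is exactly where the paper is loose. Indeed, under the reading of $P$ that makes the paper's identities $t_1=p_1+p_2+p_5$, $b_1=p_1$, $b_2=p_2$ true (adjacent descent pairs), $p_5$ counts the blocks $B$ with $O(B)<v<C(B)$ that contain no letter of value $v$, which is not the quantity named in the lemma. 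Your side remark that the strictly-embracing count of \cref{def2} is not a function of $\Destop(W)$ and $\Desbot(W)$ alone is also right and worth recording: the words $5\,3\,1\,3$ and $5\,3\,3\,1$ have identical descent-top and descent-bottom multisets $\{5,3\}$ and $\{3,1\}$, yet have $1$ and $0$ blocks strictly embracing $v=3$ respectively, so some reinterpretation of ``embrace'' is genuinely forced before the lemma can hold; your convention $O(B)<v\le C(B)$ is the one under which the embracing blocks coincide with the open blocks of the $(v-1)$-insertion, matching the count the paper later uses in \cref{lma20}. In short, your argument buys an explicit closed formula and a correct treatment of the boundary cases, at the cost of committing to one embracing convention; the paper's five-class decomposition is shorter but establishes determinacy of a slightly different count and leaves the block-versus-pair issue unresolved.
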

\begin{proof}
Based on the definition of descent top and descent bottom, it is clear that the number of descent top and the number of descent bottom in the word equals each other.
\begin{align*}
    \mid \Destop(W)\mid = \mid \Desbot(W)\mid
\end{align*}
Let $T_1$ be a multiset of letter in $\Destop(W)$ that is greater than $v$, $T_2$ be a multiset of letter in $\Destop(W)$ that equals to $v$, and $T_3$ be a multiset of letter in $\Destop(W)$ that is smaller than $v$. Let $t_i$ be a number of element in $T_i$.

\noindent
Respectively, we define $U_1$, $U_2$, $U_3$ and $b_i$ for multiset $\Desbot(W)$.

Let $P=\{(t,b):t\in \Destop(W), b\in \Desbot(W)\}$ be a multiset of all descent block in $w$. We define $P_i$ with $i\in[5]$ as follow:
\begin{itemize}
    \item case when both descent top and descent bottom greater than $v$\\
    $P_1=\{(t,b):t\in T_1, b\in U_1\}$
    \item case when both descent top greater than $v$ and descent bottom equals to $v$\\
    $P_2=\{(t,b):t\in T_1, b\in U_2\}$
    \item case when both descent top equals to $v$ and descent bottom less than $v$\\
    $P_3=\{(t,b):t\in T_2, b\in U_3\}$
    \item case when both descent top and descent bottom less than $v$\\
    $P_4=\{(t,b):t\in T_3, b\in U_3\}$
    \item case when both descent top greater than $v$ and descent bottom less than $v$\\
    $P_5=\{(t,b):t\in T_1, b\in U_3\}$
\end{itemize}
It is clear that $P=P_1\cup P_2\cup P_3\cup P_4\cup P_5$ and $P_i\cap P_j = \emptyset$. Let $p_i = \mid P_i\mid $.

We construct the equality between $t_i$, $b_i$ and $p_i$. We start with $t_1$.
\begin{align*}
    T_1 &= \{t:t \text{ is the first element of }(t,b)\in P_1\}\\
    &\qquad \cup \{t:t \text{ is the first element of }(t,b)\in P_2\}\\
    &\qquad \cup \{t:t \text{ is the first element of }(t,b)\in P_5\}
\end{align*}
and
\begin{align*}
    \{t:t \text{ is the first element of }(t,b)\in P_1\} \cap \{t:t \text{ is the first element of }(t,b)\in P_2\} = \emptyset\\
    \{t:t \text{ is the first element of }(t,b)\in P_1\} \cap \{t:t \text{ is the first element of }(t,b)\in P_5\} = \emptyset\\
    \{t:t \text{ is the first element of }(t,b)\in P_5\} \cap \{t:t \text{ is the first element of }(t,b)\in P_2\} = \emptyset
\end{align*}
Then $t_1 = p_1 + p_2 + p_5$.\\
Respectively, we have $b_2=p_2$ and $b_1=p_1$. Then $p_5 = t_1-b_2-b_1$. 

We notice that $t_i$ and $b_i$ are determined by $\Destop(W)$ and $\Desbot(W)$. The multiset $P_5$ is also the multiset of all the descent blocks that embrace $v$. All of these imply that the number of descent blocks that can embrace $v$ in $w$ is determined by $\Destop(W)$ and $\Desbot(W)$.
\end{proof}

\begin{Rem}\label{rem1}
Given a word $w$. Let $\epsilon(W) = U$ and $\epsilon(\epsilon(W))) = U^{\prime}$. As remarked in \cref{rem2}, with every $A$ in $W$, the tuple $(\v(A),\d(A))$ is unique within $W$ elements. Since $\theta_W$ preserves $\r$, $\d$, $\v$ of $A \in W$, there exists one only $B \in U$ has equals tuple $(\v,\d)$ to $A$, which is the image of $A$ under $\theta_W$, hence we can denote it as $U_i$. Consequently, there exists one only $C \in U^{\prime}$ has equals tuple $(\v,\d)$ to $A$, which is the image of $U_i$ under $\theta_{U}$. We denote this as $C_i$.
\begin{equation*}
    C_i = \theta_{\epsilon(W)}(\theta_W(W_i))
\end{equation*}
\end{Rem}

\begin{Lem}\label{lma12}
Given a word $w$ and the 4-tuple-letter $W_i$ is a 4-tuple-letter in $w$. Let $\epsilon(W)= U$ and $\epsilon(\epsilon(W))) = W^{\dprime}$. Let $U_i$ and $C_i$ is the 4-tuple-letter in $U$ and $W^{\dprime}$ that has the same $(\v,\d)$ to $W_i$. Then $\p(C_i) = \p(W_i)$
\end{Lem}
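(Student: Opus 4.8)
\section*{Proof proposal}

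\noindent\textbf{Strategy.} The plan is to show that $C_i$ and $W_i$ have the same descent-top status and the same descent-bottom status. By Remark \ref{rem9} the position of a $4$-tuple-letter is completely determined by whether it is a descent top and whether it is a descent bottom (insider $=$ both, outsider $=$ neither, opener $=$ descent bottom only, closer $=$ descent top only), so once both statuses agree we get $\p(C_i) = \p(W_i)$ for free. Throughout I use Remark \ref{rem1}: $C_i = \theta_{\epsilon(W)}(\theta_W(W_i)) = \theta_U(U_i)$, where $U_i = \theta_W(W_i)$ is the unique element of $U$ with $(\v,\d) = (\v(W_i),\d(W_i)) =: (v,i)$, so $U_i = U_{(v,i)}$; and that $f_U$ is defined because, by Lemma \ref{lma19}, each value class $U\vert_{\v=v}$ is again $v$-consistent.

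\noindent\textbf{One application of $\theta$, and the descent-bottom half.} First I would record what a single $\theta$-step does to the two statuses. From the definition of $\theta_W$ we have $\p(U_i) = h(\p(W_i),\p(f_W(W_i)))$, so Lemma \ref{lma2} gives: $U_i$ is a descent top iff $f_W(W_i)$ is a descent top, and $U_i$ is a descent bottom iff $W_i$ is a descent bottom. The descent-bottom statement iterates trivially: applying Lemma \ref{lma2} again to $C_i = \theta_U(U_i)$, $C_i$ is a descent bottom iff $U_i$ is, hence iff $W_i$ is. So the whole problem reduces to the descent-top status, where it remains to prove that $f_U(U_{(v,i)})$ is a descent top iff $W_i$ is a descent top.

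\noindent\textbf{The reversal property of $f_{W,v}$, applied twice.} The key observation is that, fixing a value $v$ and letting $k$ be the number of letters of value $v$, the map $f_{W,v}$ reverses descent-top statuses in the precise sense that $f_{W,v}(W_{(v,j)})$ is a descent top iff $W_{(v,\,k+1-j)}$ is a descent top. Indeed $f_{W,v}$ sends $T_{v_e}(j) = W_{(v,j)}$ to $T_{v_\p}(j) = W'_{(v,j)}$, and the first observation in the proof of Lemma \ref{lma18} says that the non-descent-top indicator tuple of $T_{v_\p}$ is $(x'_{(v,1)},\dots,x'_{(v,k)})$ with $x'_{(v,i)} = x_{(v,\,k+1-i)}$, i.e.\ it is the non-descent-top indicator tuple of $T_{v_e}$ read backwards. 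Combining this with the single-step fact: $U_{(v,i)} = \theta_W(W_{(v,i)})$ inherits the descent-top status of $f_W(W_{(v,i)})$, hence of $W_{(v,\,k+1-i)}$. Since Lemma \ref{lma19} makes $U\vert_{\v=v}$ $v$-consistent, the same reversal applies to $f_{U,v}$, so $C_i = \theta_U(U_{(v,i)})$ inherits the descent-top status of $f_U(U_{(v,i)})$, hence of $U_{(v,\,k+1-i)} = \theta_W(W_{(v,\,k+1-i)})$, hence of $f_W(W_{(v,\,k+1-i)})$, hence of $W_{(v,\,k+1-(k+1-i))} = W_{(v,i)} = W_i$. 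Thus $C_i$ and $W_i$ agree on both statuses, and the lemma follows from Remark \ref{rem9}.

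\noindent\textbf{Main obstacle.} The one nontrivial point is pinning down the reversal claim for $f_{W,v}$ cleanly: unwinding the two-queue construction so that the descent-top statuses along $T_{v_\p}$ really are the reversal of those along $T_{v_e}$, and checking that the index shift $j \mapsto k+1-j$ is exactly the one produced by the construction. Everything else is a routine twofold composition of Lemma \ref{lma2} with the stability of $v$-consistency under $\epsilon$ (Lemma \ref{lma19}), which is what lets the second copy of $\theta$ and $f$ behave the same way as the first.
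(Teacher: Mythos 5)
Your proposal is correct and follows essentially the same route as the paper: reduce position to the pair (descent-bottom status, descent-top status), propagate the descent-bottom status unchanged through two applications of Lemma \ref{lma2}, and for the descent-top status use the index reversal $j \mapsto k+1-j$ built into $f_{W,v}$ twice so that the two reversals cancel. Your explicit appeal to Lemma \ref{lma19} to justify that $f_U$ enjoys the same reversal property on $U\vert_{\v=v}$ is a point the paper leaves implicit, but it is the same argument.
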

\begin{proof}
As we remarked in the definition of position, the position of the letter depends on whether it is the descent bot and whether it is descent top. We define the mapping \begin{align*}
    g_{\p}(bot, top) = 
    \begin{cases}
    \text{opener} & \quad {\text{if } bot = true , top = false}\\
    \text{closer} & \quad {\text{if } bot = false , top = true}\\
    \text{insider} & \quad {\text{if } bot = true , top = true}\\
    \text{outsider} & \quad {\text{if } bot = false , top = false}
    \end{cases}
\end{align*}
It is clear that $g_{\p}$ is a bijection.

\noindent
We define $g_{bot}(X)$ and $g_{top}(X)$ returns whether $X$ is a descent bottoms and descent top.

The position of the 4-tuple-letter $X$
\begin{equation}
    \p(W_i) = g_{\p}(g_{bot}(W_i),g_{top}(W_i))
\end{equation}
We have:
\begin{align}\label{elma121}
    \p(C_i) = g_{\p}(g_{bot}(C_i),g_{top}(C_i)
\end{align}

As proved in the \cref{lma2}, $\theta_W(X)$ is a descent bot if and only if $X$ is a descent bot, which means
\begin{equation}\label{elma122}
    g_{bot}(C_i) = g_{bot}(U_i) = g_{bot}(W_i)
\end{equation}

Assume that $W_i$ has value is $v$ and duplicate index is $i$, we denote $W_i$ as $W_{(v,i)}$. According to the way we construct $\theta$, then $U_i$ and $C_i$ also has the same value and duplicate index as $W_i$.\\
Assume in the word $w$ has $k_v$ duplicate letter with value $v$. According to the mapping $f_W$, then $f_W(W_{v_{k_v+1-i}})$ will have the same non-descent-top status to $W_{(v,i)}$, which means
\begin{equation}\label{elma123}
    g_{top}(W_{(v,i)}) = g_{top}(f(W_{v_{k_v+1-i}}))
\end{equation}
Follow the \cref{lma2}, then:
\begin{equation}\label{elma124}
    g_{top}(f(W_{v_{k_v+1-i}})) = g_{top}(U_{v_{k_v+1-i}})
\end{equation}
We replace $W_i$ in \cref{elma123} with $U_{v_{k_v+1-i}}$
\begin{align}\label{elma125}
    \nonumber
    g_{top}(U_{v_{k_v+1-i}}) &= g_{top}(f(U_{v_{k_v+1-(k_v+1-i)}}))\\
    \nonumber
    &= g_{top}(f(U_{(v,i)}))\\
    &= g_{top}(C_{v_i})
\end{align}
From \cref{elma123}, \cref{elma124} and \cref{elma125}:
\begin{equation}\label{elma126}
    g_{top}(W_i) = g_{top}(C_i)
\end{equation}
Apply \cref{elma122} and \cref{elma126} into \cref{elma121}:
\begin{equation}
    \p(C_i) = g_{\p}(g_{bot}(W_i),g_{top}(W_i)) = \p(W_i)
\end{equation}
\end{proof}

From the \cref{lma20}, it is clear that $\epsilon$ has the same domain and codomain, which is $L_{\Sset_{(A,m)}}$.
\begin{align*}
    \epsilon : L_{\Sset_{(A,m)}} \to L_{\Sset_{(A,m)}}
\end{align*}
We will prove the $\epsilon$ is bijective by proving it is an involution.
\begin{Lem}\label{lma9}
$\epsilon$ is an involution
\end{Lem}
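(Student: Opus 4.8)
The plan is to prove $\epsilon\circ\epsilon=\mathrm{id}$ on $L_{\Sset_{(A,m)}}$ by checking that each of the four coordinates of a 4-tuple-letter is restored after two applications of $\epsilon$. First I would set up the bookkeeping from \cref{rem1}: fix $W\in L_{\Sset_{(A,m)}}$, put $U=\epsilon(W)$ and $U'=\epsilon(\epsilon(W))$, and use that $\theta_W$ preserves the pair $(\v,\d)$, so the elements of $W$, $U$, $U'$ are matched up bijectively. For a fixed value--duplicate-index pair $(v,i)$ I would write $W_i$, $U_i$, $C_i$ for the corresponding 4-tuple-letters, where $C_i=\theta_{\epsilon(W)}(\theta_W(W_i))$. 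Note $U'\in L_{\Sset_{(A,m)}}$ in the first place because $\epsilon$ maps well-defined sets to well-defined sets by \cref{lma20}, so $\epsilon$ is a genuine map $L_{\Sset_{(A,m)}}\to L_{\Sset_{(A,m)}}$ and the identity $\epsilon(\epsilon(W))=W$ makes sense.

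Next I would dispatch the three easy coordinates. By the definition of $\theta$, the value, the right embracing number, and the duplicate index are each copied verbatim from input to output, so $\v(C_i)=\v(U_i)=\v(W_i)$, $\r(C_i)=\r(U_i)=\r(W_i)$, and $\d(C_i)=\d(U_i)=\d(W_i)$. That leaves only the position coordinate.

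The position is exactly the content of \cref{lma12}, which already gives $\p(C_i)=\p(W_i)$; that lemma in turn rests on $f_W$ being an involution (\cref{lma1}, via \cref{lma18}) together with \cref{lma2} describing how $h$ transports descent-top and descent-bottom status. So once \cref{lma12} is invoked, all four coordinates of $C_i$ agree with those of $W_i$, hence $C_i=W_i$ for every pair $(v,i)$. Since the $(\v,\d)$-indexing runs over all of $W$, this yields $\epsilon(\epsilon(W))=W$, so $\epsilon$ is an involution and, in particular, bijective.

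The main obstacle is not in this lemma itself --- the real work was done earlier in establishing \cref{lma12} (and through it \cref{lma1,lma2}). Here the only thing to be careful about is the index matching across $W$, $U$, and $U'$: one must consistently identify elements by "same $(\v,\d)$" so that the four coordinate equalities are read off the correct pair of 4-tuple-letters, and one must make sure the composition $\theta_{\epsilon(W)}\circ\theta_W$ is the map whose action on positions \cref{lma12} actually describes.
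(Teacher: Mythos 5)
Your proposal is correct and follows essentially the same route as the paper: identify $W_i$ with its counterpart $C_i\in\epsilon(\epsilon(W))$ via the preserved $(\v,\d)$ pair (\cref{rem1}), note that $\v$, $\d$, $\r$ are copied verbatim by $\theta$, and invoke \cref{lma12} for the position coordinate to conclude $C_i=W_i$. Your additional remark that \cref{lma20} is needed to know $\epsilon$ genuinely maps $L_{\Sset_{(A,m)}}$ into itself is a small but worthwhile precision the paper leaves implicit.
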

\begin{proof}
As remarked in \cref{rem1}, with each $W_i \in W$, exist only one $C_i \in U^{\prime}$ has the same value, duplicate index, and right embracing sum as $W_i$.
And according to \cref{lma12}, $\p(C_i) = \p(W_i)$. Then $C_i=W_i$. Follows that, $\epsilon$ is an involution
\end{proof}

The follow theory is the result of \cref{lma9}
\begin{The}\label{the3}
$\epsilon$ is bijective
\end{The}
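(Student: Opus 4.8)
The plan is to deduce this theorem immediately from the fact, established in \cref{lma9}, that $\epsilon$ is an involution on $L_{\Sset_{(A,m)}}$. Recall that \cref{lma20} already guarantees that $\epsilon$ sends a well-defined (equivalently, consistent) 4-tuple-letter set to another such set, so $\epsilon \colon L_{\Sset_{(A,m)}} \to L_{\Sset_{(A,m)}}$ is a well-defined endofunction; it remains only to argue bijectivity.

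First I would record the general principle that any involution of a set is a bijection. For injectivity: suppose $\epsilon(W_1) = \epsilon(W_2)$; applying $\epsilon$ to both sides and using $\epsilon \circ \epsilon = \mathrm{id}$ — which is exactly the content of \cref{lma9} — yields $W_1 = W_2$. For surjectivity: given any $U \in L_{\Sset_{(A,m)}}$, the set $\epsilon(U)$ again lies in $L_{\Sset_{(A,m)}}$ by \cref{lma20}, and $\epsilon(\epsilon(U)) = U$ by \cref{lma9}, so $\epsilon(U)$ is a preimage of $U$. Hence $\epsilon$ is both injective and surjective, i.e. bijective, and moreover $\epsilon^{-1} = \epsilon$.

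There is essentially no obstacle remaining at this stage: all of the substantive work sits inside \cref{lma9} and the chain of lemmas it rests on (\cref{rem1}, \cref{lma2}, \cref{lma1}, \cref{lma12}), which together show that for each $W_i \in W$ the doubly-mapped 4-tuple-letter $C_i = \theta_{\epsilon(W)}(\theta_W(W_i))$ shares its value, duplicate index, right embracing number and position with $W_i$, forcing $C_i = W_i$. Once that identity is in hand, the present theorem is a one-line formal consequence, so in the write-up I would simply cite \cref{lma9} together with \cref{lma20} and invoke the involution-implies-bijection argument above.
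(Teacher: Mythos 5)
Your proposal is correct and follows exactly the route the paper takes: the paper derives \cref{the3} directly from \cref{lma9} (that $\epsilon$ is an involution), having already noted via \cref{lma20} that $\epsilon$ maps $L_{\Sset_{(A,m)}}$ into itself. You merely spell out the standard involution-implies-bijection argument that the paper leaves implicit, so nothing further is needed.
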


\vspace{5mm}
\begin{The}\label{the5}
$\Phi$ is an involution
\end{The}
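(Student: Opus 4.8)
The plan is to read off the result from the factorization $\Phi=\zeta\circ\epsilon\circ\delta$ together with the three structural facts already proved: $\delta\colon\Sset_{(A,m)}\to L_{\Sset_{(A,m)}}$ and $\zeta\colon L_{\Sset_{(A,m)}}\to\Sset_{(A,m)}$ are mutually inverse bijections (\cref{the1}, \cref{the2}, \cref{lma24}), and $\epsilon$ is an involution on $L_{\Sset_{(A,m)}}$ (\cref{lma9}). Thus $\Phi$ is the conjugate $\delta^{-1}\circ\epsilon\circ\delta$ of an involution, and conjugates of involutions are involutions. The only preliminary point to check is that $\Phi$ really is a self-map of $\Sset_{(A,m)}$: $\delta$ lands in $L_{\Sset_{(A,m)}}$ by definition, $\epsilon$ maps $L_{\Sset_{(A,m)}}$ to itself by \cref{lma20}, and $\zeta$ maps $L_{\Sset_{(A,m)}}$ back to $\Sset_{(A,m)}$, so the composition is defined everywhere and $\Phi(w)\in\Sset_{(A,m)}$ for every $w$.

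Next I would carry out the cancellation explicitly. Fix $w\in\Sset_{(A,m)}$ and write
\[
\Phi(\Phi(w))=\zeta\bigl(\epsilon\bigl(\delta\bigl(\zeta\bigl(\epsilon(\delta(w))\bigr)\bigr)\bigr)\bigr).
\]
Put $U=\epsilon(\delta(w))\in L_{\Sset_{(A,m)}}$. By \cref{lma24} the map $\zeta$ is a two-sided inverse of $\delta$, so $\delta(\zeta(U))=U$; hence the displayed expression collapses to $\zeta(\epsilon(U))=\zeta\bigl(\epsilon(\epsilon(\delta(w)))\bigr)$. Now $\epsilon$ is an involution (\cref{lma9}), so $\epsilon(\epsilon(\delta(w)))=\delta(w)$, and finally $\zeta(\delta(w))=w$ by \cref{lma13}. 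Therefore $\Phi\circ\Phi=\mathrm{id}_{\Sset_{(A,m)}}$, i.e. $\Phi$ is an involution.

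Since the substantive work has been front-loaded into \cref{lma9}, \cref{lma20}, \cref{lma13} and \cref{lma24}, there is no genuine obstacle remaining; the one spot demanding a moment's care is that in the middle of the composition we need $\delta(\zeta(U))=U$ for an arbitrary $U\in L_{\Sset_{(A,m)}}$, not merely $\zeta(\delta(w))=w$, which is precisely why one must invoke \cref{lma24} (both-sided inverse) rather than only \cref{lma13}. I would also note in passing that \cref{the5} immediately re-derives that $\Phi$ is a bijection, consistent with the $\Phi$-flow diagram, and that combined with the pattern-count identities established for $\delta$, $\epsilon$, $\zeta$ it yields the intended equalities $\mad_3\,w=\mad_1\,\Phi(w)$, etc.
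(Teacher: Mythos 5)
Your proposal is correct and follows essentially the same route as the paper, which also deduces the result from the factorization $\Phi=\zeta\circ\epsilon\circ\delta$, the involutivity of $\epsilon$ (\cref{lma9}), and the fact that $\delta$ and $\zeta$ are mutually inverse bijections (\cref{the1}, \cref{the2}, \cref{lma24}); you merely carry out the cancellation explicitly where the paper declares it obvious. Your remark that the middle cancellation needs $\delta(\zeta(U))=U$ for arbitrary $U$, i.e.\ the two-sided inverse of \cref{lma24} rather than only \cref{lma13}, is a worthwhile precision but not a departure from the paper's argument.
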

\begin{proof}
The theory is obvious because:
\begin{itemize}
    \item $\Phi(w) = \zeta \circ \epsilon \circ \delta(w)$
    \item $\epsilon$ is an involution, according to \cref{lma9}
    \item both $\delta$ and $\zeta$ are bijective (\cref{the1}, \cref{the2}), and $\delta$ is an inverse of $\zeta$ (\cref{lma24})
\end{itemize}
\end{proof}
At the end of this section, we will consider an example of the $\Phi$ function
\begin{Exam}
Given a word $w=3-54-41-1-42-53$. We will calculate $\Phi(w)$

First, we need to find the 4-tuple-letter set of $w$ follow $\delta$ function.
\[\begin{matrix}
   w & 3_1 & 5_1 & 4_1 & 4_2 & 1_1 & 1_2 & 4_3 & 2_1 & 5_2 & 3_2 \\
     & W_1 & W_2 & W_3 & W_4 & W_5 & W_6 & W_7 & W_8 & W_9 & W_{10}\\
   \v & 3 & 5 & 4 & 4 & 1 & 1 & 4 & 2 & 5 & 3\\
   \d & 1 & 1 & 1 & 2 & 1 & 2 & 3 & 1 & 2 & 2\\
   \p & \text{outsider} & \text{closer} & \text{opener} & \text{closer} & \text{opener} & \text{outsider} & \text{closer} & \text{opener} & \text{closer} & \text{opener}\\
   \r & 2 & 0 & 1 & 1 & 0 & 0 & 1 & 0 & 0 & 0
\end{matrix}
\]
From the table above, we conclude that:
\begin{multline*}
    \delta(w) = W = \{(3,1,\out,2),(5,1,\close,0),(4,1,\open,1),(4,2,\close,1),(1,1,\open,0),\\
    (1,2,\out,0),(4,3,\close,1),(2,1,\open,0),(5,2,\close,0),(3,2,\open,0)\}
\end{multline*}

Next, we will find $U = \epsilon(W)$.
\begin{enumerate}
    \item We construct function $f_W$:
\begin{itemize}
    \item $f_{W,4}$\\
    $W\vert _{\v=4} = \{W_{(v,1)},W_{(v,2)},W_{(v,3)}\} = \{W_3,W_4,W_7\}$.\\
    $T_{4_e} = (W_{(v,1)},W_{(v,2)},W_{(v,3)}) = (W_3,W_4,W_7)$ with tuple of right embracing number is $(1,1,1)$ and tuple of non-descent-top $(1,0,0)$\\
    $S_{\text{non descent top}} = (W_{(4,1)}) = (W_3)$ and $S_{\text{descent top}} = (W_{(4,2)},W_{(4,3)}) = (W_4,W_7)$\\
    The tuple $(x^{\prime}_{(4,1)},\dots x^{\prime}_{(4,3)}) = (0,0,1)$\\
    Then $T_{v_{\p}} = (W^{\prime}_{(v,1)},W^{\prime}_{(v,2)},W^{\prime}_{(v,3)}) = (W^{\prime}_3,W^{\prime}_4,W^{\prime}_7) = (W_{(4,2)},W_{(4,3)},W_{(4,1)}) = (W_4,W_7,W_3)$\par
    We conclude that $f_W: W_3 \mapsto W_4 \qquad W_4 \mapsto W_7 \qquad W_7 \mapsto W_3$
    \item Similarly, with with input $W_i$ that has value equals to $1$,$2$,$3$,$5$:
    \begin{itemize}
        \item $f_W: W_5 \mapsto W_5 \qquad W_6 \mapsto W_6$
        \item $f_W: W_8 \mapsto W_8$
        \item $f_W: W_1 \mapsto W_1 \qquad W_{10} \mapsto W_{10}$
        \item $f_W: W_2 \mapsto W_2 \qquad W_9 \mapsto W_9$
    \end{itemize}
\end{itemize}

    \item We calculate $U_i = \theta_W(W_i)$
\begin{center}
\begin{tabular}{ |c|c|c|c|c|c|c| } 
 \hline
 $U_i$ & $W_i$ & $f_W(W_i)$ & \vtop{\hbox{\strut $\v(U_i)$}\hbox{\strut $=\v(W_i)$}} & \vtop{\hbox{\strut $e(U_i)$}\hbox{\strut $=e(W_i)$}} & \vtop{\hbox{\strut $\p(U_i)$}\hbox{\strut $=h(\p(f(W_i)),\p(W_i))$}} & \vtop{\hbox{\strut $\d(U_i)$}\hbox{\strut $=\d(W_i)$}}\\
 \hline
 $U_1$ & $W_1$ & $W_1$ & 3 & 2 & outsider & 1 \\ 
 $U_2$ & $W_2$ & $W_2$ & 5 & 0 & closer & 1 \\ 
 $U_3$ & $W_3$ & $W_4$ & 4 & 1 & insider & 1 \\ 
 $U_4$ & $W_4$ & $W_7$ & 4 & 1 & closer & 2 \\ 
 $U_5$ & $W_5$ & $W_5$ & 1 & 0 & opener & 1 \\ 
 $U_6$ & $W_6$ & $W_6$ & 1 & 0 & outsider & 2 \\ 
 $U_7$ & $W_7$ & $W_3$ & 4 & 1 & outsider & 3 \\ 
 $U_8$ & $W_8$ & $W_8$ & 2 & 0 & opener & 1 \\ 
 $U_9$ & $W_9$ & $W_9$ & 5 & 0 & closer & 2 \\ 
 $U_{10}$ & $W_{10}$ & $W_{10}$ & 3 & 0 & opener & 2\\ 
 \hline
\end{tabular}
\end{center}
\end{enumerate}
    
Finally, we construct $w^{\prime}$ from $U$ by using $\zeta$
\begin{itemize}
    \item $1_2$-insertion: $1_2$
    \item $1_1$-insertion: $\infty 1_1 - 1_2$
    \item $2_1$-insertion: $\infty 1_1 - 1_2 - \infty 2_1$
    \item $3_2$-insertion: $\infty 1_1 - 1_2 - \infty 2_1 - \infty 3_2$
    \item $3_1$-insertion: $3_1 - \infty 1_1 - 1_2 - \infty 2_1 - \infty 3_2$
    \item $4_3$-insertion: $3_1 - \infty 1_1 - 1_2 - \infty 2_1 - 4_3 - \infty 3_2$
    \item $4_2$-insertion: $3_1 - \infty 1_1 - 1_2 - 4_2 2_1 - 4_3 - \infty 3_2$
    \item $4_1$-insertion: $3_1 - \infty 4_1 1_1 - 1_2 - 4_2 2_1 - 4_1 - \infty 3_2$
    \item $5_2$-insertion: $3_1 - \infty 4_1 1_1 - 1_2 - 4_2 2_1 - 4_3 - 5_2 3_2$
    \item $5_1$-insertion: $3_1 - 5_1 4_1 1_1 - 1_2 - 4_2 2_1 - 4_3 - 5_2 3_2$
\end{itemize}
We conclude that $\Phi(3-54-41-1-42-53) = 3-541-1-42-4-53$
\end{Exam}

\section{Patterns through bijection}
In this section, we will prove that: 
\begin{equation*}
    ((2\underline{31}),(\underline{31}2),(\underline{21}1),(\underline{21}2),(2\underline{21}),\underline{21})w = ((2\underline{31}),(\underline{31}2),(\underline{21}1),(2\underline{21}),(\underline{21}2),\underline{21})\Phi(w)
\end{equation*}

\begin{Lem}\label{lma5}
Given a word $w$. Let $v$ be a letter in $w$. Then the number of descent blocks that can embrace $v$ in $w$ equals to the number of descent blocks that can embrace $v$ in $\Phi(w)$.
\end{Lem}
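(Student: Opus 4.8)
The plan is to reduce the statement to two facts established earlier: that $\epsilon$ leaves the descent-top and descent-bottom multisets unchanged (\cref{lma3}), and that the number of descent blocks embracing a given value is a function of those two multisets alone (\cref{lma4}).

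First I would fix notation by writing $W=\delta(w)$, $U=\epsilon(W)$ and $w'=\Phi(w)=\zeta(U)$. Because $\zeta$ is the inverse of $\delta$ (\cref{lma24}), the set $U$ is precisely the $4$-tuple-letter set of the word $w'$; hence, by the remark accompanying \cref{def7}, $\Destop(U)$ is the multiset of descent tops of $w'$ and $\Desbot(U)$ is the multiset of descent bottoms of $w'$. In the same way $\Destop(W)$ and $\Desbot(W)$ are the descent-top and descent-bottom multisets of $w$ itself. Then I would invoke \cref{lma3}, which gives $\Destop(W)=\Destop(U)$ and $\Desbot(W)=\Desbot(U)$, so that $w$ and $\Phi(w)$ share the same descent-top multiset and the same descent-bottom multiset.

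Finally, \cref{lma4} asserts that for any word the number of descent blocks embracing a value $v$ is determined by that word's descent-top and descent-bottom multisets. Applying this to $w$ and to $w'=\Phi(w)$ and using the two multiset equalities just obtained, the two counts coincide, which is exactly the claim. (Since $\Phi$ is a bijection of $\Sset_{(A,m)}$, a letter $v$ of $w$ is again a letter of $\Phi(w)$, so the hypothesis of \cref{lma4} is satisfied on both sides; in fact the argument would work for any value $v$.)

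The only delicate point, and the one I expect to require the most care in the write-up, is the bookkeeping identification $\delta(\Phi(w))=\epsilon(\delta(w))$, i.e.\ making it fully precise that the descent tops and descent bottoms of the \emph{output} word $\Phi(w)$ are literally the letters recorded by $\Destop(U)$ and $\Desbot(U)$. This rests entirely on $\zeta=\delta^{-1}$ together with the compatibility between a word and its $4$-tuple-letter set; once that is in place the proof is a short chain of equalities with no computation.
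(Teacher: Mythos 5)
Your proof is correct and follows essentially the same route as the paper: both reduce the claim to \cref{lma3} (invariance of the descent-top and descent-bottom multisets under $\epsilon$) combined with \cref{lma4} (the embracing count is determined by those multisets). Your explicit justification that $U=\epsilon(\delta(w))$ is literally the $4$-tuple-letter set of $\Phi(w)$, via $\zeta=\delta^{-1}$, is a point the paper passes over more quickly, but the argument is the same.
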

\begin{proof}
We denote the 4-tuple-letter set of $w$ is $W$ and the 4-tuple-letter set of $\Phi(w)$ is $U$. According to the definition of $\Phi$, $U$ is the image of $W$ under $\epsilon$. Follow the \cref{lma3}:
\begin{align} \label{elma51}
    \Desbot(W) = \Desbot(U)
\end{align}
and
\begin{align} \label{elma52}
    \Destop(W) = \Destop(U)
\end{align}
Follow the \cref{lma4}, the number of descent blocks that can embrace $v$ in $w$ is determined by $\Desbot(W)$ and $\Destop(W)$, and the number of descent blocks that can embrace $v$ in $w^{\prime}$ is determined by $\Desbot(U)$ and $\Destop(U)$. Along with \cref{elma51} and \cref{elma52}, the lemma is proved.
\end{proof}

\begin{Lem}\label{lma6}
Given a word $w=w_1w_2\dots w_n$ in $\Sset_{(A,m)}$ with its set of 4-tuple-letter $W=\{W_1,\dots W_n\}$. Let $U = \{U_1,U_2,\dots ,U_n\}$ in which $U_i$ be an image of $W_i$ under $\theta_W$. Let $w^{\prime} = W^{\prime}_1\dots W^{\prime}_n$ be a word constructed from $U$ under $\zeta$, and $W^{\prime}_j$ is the present of $U_i$ in $w^{\prime}$. Then:
\begin{align*}
    \l(U_i) = \l({W^{\prime}_j}) = \l(W_i) = \l(w_i)
\end{align*}
\end{Lem}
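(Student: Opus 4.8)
The plan is to strip off the two formal equalities and prove the single substantive one, $\l(w_i)=\l(W'_j)$, by a counting argument. The equalities $\l(W_i)=\l(w_i)$ and $\l(U_i)=\l(W'_j)$ are just the definition of the left embracing number of a $4$-tuple-letter: it is the left embracing number of the unique letter carrying that $4$-tuple-letter in the word that $\zeta$ recovers. Since $\zeta(\delta(w))=w$ by \cref{lma13}, the carrier of $W_i$ is $w_i$; and by hypothesis $W'_j$ is the carrier of $U_i$ in $w'=\zeta(U)$, which is legitimate because $\delta(\zeta(U))=U$ by \cref{lma24}. So I would reduce the lemma to proving $\l(w_i)=\l(W'_j)$.

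The heart is the following decomposition. Fix a word $u$, a value $v$, and an occurrence $x$ of $v$ in $u$. Every proper descent block of $u$ that embraces $v$ is, relative to $x$, either strictly to the left of $x$, strictly to the right of $x$, or the block of $u$ containing $x$; the last alternative happens exactly when $x$ is an insider, since an opener has value $O(B)$, a closer has value $C(B)$, and an outsider sits in an improper block, so $O(B)<v<C(B)$ fails for the block of $x$ unless $x$ is an insider. Therefore
\[
  \l_u(x)+\r_u(x)+\bigl[\p_u(x)=\insi\bigr]=N_u(v),
\]
where $N_u(v)$ is the number of proper descent blocks of $u$ embracing $v$ and $[\cdot]$ is the indicator.

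Now I would instantiate this identity once with $(u,x)=(w,w_i)$ and once with $(u,x)=(w',W'_j)$, writing $v=\v(w_i)=\v(W'_j)$. By \cref{lma5} the two right-hand sides coincide, $N_w(v)=N_{w'}(v)$. The right embracing number is preserved along $\Phi$, since $\theta_W$ keeps the fourth coordinate of a $4$-tuple-letter fixed and $\delta(\zeta(U))=U$, so $\r(W'_j)=\r(U_i)=\r(W_i)=\r(w_i)$. Finally, "$w_i$ is an insider" is equivalent to "$W'_j$ is an insider": $w_i$ is an insider iff $W_i$ is at once a descent top and a descent bottom, and likewise $W'_j$ is an insider iff $U_i$ is, and by \cref{lma3} together with \cref{lma2} applied to the map $h$ in the definition of $\theta_W$ these compound statuses are carried correctly from $W_i$ to $U_i$. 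Subtracting the two instances of the displayed identity and cancelling the three coinciding terms leaves $\l(w_i)=\l(W'_j)$, and the lemma follows.

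I expect the last step, the stability of the insider indicator, to be the main obstacle: the descent-bottom bit of $U_i$ is inherited from $W_i$ while its descent-top bit is inherited from $f_W(W_i)$, so one has to follow these two bits through $f_W$, through $h$, and through the reconstruction $\zeta$, and verify that the bookkeeping in $\theta_W$ is exactly what forces the two insider indicators to agree. Everything else is substitution into the counting identity plus the bijectivity facts of \cref{lma13} and \cref{lma24}.
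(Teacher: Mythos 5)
Your reduction and your counting identity $\l_u(x)+\r_u(x)+[\p_u(x)=\insi]=N_u(v)$ are correct, and this is essentially the paper's own route --- except that the paper's proof silently drops the indicator term and writes $\l(x)=N_u(v)-\r(x)$ outright, so your version is the more careful one. The argument then breaks at exactly the step you flagged: the insider indicator is \emph{not} carried from $W_i$ to $U_i$. By the construction of $\theta_W$ together with \cref{lma2}, the descent-bottom bit of $U_i$ is that of $W_i$ while the descent-top bit is that of $f_W(W_i)$, and $f_W$ genuinely permutes the descent-top bits among the letters of a given value. In the paper's Example~\ref{ex2} the letter $(2,1,\insi,0)$ is mapped to $(2,1,\open,0)$: an insider becomes an opener. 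So ``$w_i$ is an insider iff $W^{\prime}_j$ is an insider'' is false, and the three cancellations you need do not all go through.

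Nor can this step be patched, because for the block-based $\l$ of \cref{def2} the conclusion of the lemma is itself false. In the paper's closing example, $w=3\mn54\mn41\mn1\mn42\mn53$ and $\Phi(w)=3\mn541\mn1\mn42\mn4\mn53$; the letter $4_2$ has $\l(4_2)=0$ in $w$ (the only proper block strictly to its left is $54$, and $O(B)=4\not<4$), but $\l(4_2)=1$ in $\Phi(w)$ (the block $541$ embraces it). In the language of your identity, $N_u(4)$ jumps from $1$ to $2$ because $\Phi$ creates a $4$-valued insider, and \cref{lma4} and \cref{lma5} really only control the number of \emph{descents} $w_j>v>w_{j+1}$, which equals $N_u(v)$ minus the number of $v$-valued insiders --- precisely the quantity that $\epsilon$ fails to preserve. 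What does survive, and what \cref{the4} actually needs, is the invariance of the pattern count $(\underline{31}2)$: a descent $w_j>w_i>w_{j+1}$ is always entirely to the left or entirely to the right of position $i$, so for the descent-based counts your identity holds with no indicator term and the subtraction argument closes cleanly (both words above have $(\underline{31}2)=3$, even though their block-based sums $\Les$ are $3$ and $5$). The lemma should be restated and proved for that descent count rather than for the embracing number $\l$ of \cref{def2}.
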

\begin{proof}
Since $W^{\prime}_j$ is the present of $U_i$ in $w^{\prime}$,
\begin{align}
    \v(U_i) &= \v(W^{\prime}_j)\\
    \r(U_i) &= \r(W^{\prime}_j)\\
    \l(U_i) &= \l(W^{\prime}_j)
\end{align}
We notice that in here, the term \textit{embraced} is "strictly embraced", hence the block that can embraced $v$ will be either its right embracing block or left embracing block.
\begin{align*}
    \l(U_i) = e_{l_{W^{\prime}_j}} &=\text{$\#$descent block in $w^{\prime}$ can embrace } \v(W^{\prime}_j) - \r({W^{\prime}_j})\\
    &=\text{$\#$descent block in $w^{\prime}$ can embrace } \v(U_i) - \r(U_i)\\
    &=\text{$\#$descent block in $w$ can embrace } \v(W_i) - \r(W_i)\\
    &=\text{$\#$descent block in $w$ can embrace } \v(w_i) - \r(W_i)\\
    &= \l(W_i) = \l(w_i)
\end{align*}
\end{proof}

\begin{Lem}\label{lma7}
Given a word $w=w_1w_2\dots w_n$ in $\Sset_{(A,m)}$ with its set of 4-tuple-letter $W=\{W_1,\dots W_n\}$. Let $W\vert _{\v=v}=\{W_{(v,1)},W_{(v,2)},\dots W_{(v,k)}\}$ be a set of all $W_i \in W$ such that $\v(W_i) = v$. Then:
\begin{multline*}
    \mid \{(W_{(v,p)},W_{(v,q)}):W_{(v,p)}\text{ is descent top},\d(W_{(v,p)}) < \d(W_{(v,q)})\}\mid \\ 
    =\mid \{(W_{(v,p)},W_{(v,q)}):f(W_{(v,q)})\text{ is descent top},\d(W_{(v,p)}) < \d(W_{(v,q)})\}\mid 
\end{multline*}
\end{Lem}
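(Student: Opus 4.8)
The plan is to unwind the definition of $f_{W,v}$ so that the descent-top condition imposed on $f(W_{(v,q)})$ in the right-hand multiset is rewritten as a condition on an \emph{original} 4-tuple-letter, after which the two cardinalities become manifestly equal. Throughout, let $k$ be the number of 4-tuple-letters of $W$ having value $v$, so $W\vert_{\v=v} = \{W_{(v,1)}, \dots, W_{(v,k)}\}$, and recall that the subscript $i$ in $W_{(v,i)}$ is exactly its duplicate index; hence $\d(W_{(v,p)}) < \d(W_{(v,q)})$ is just the condition $p < q$. Let $(x_1, \dots, x_k)$ be the tuple of non-descent-top statuses of $(W_{(v,1)}, \dots, W_{(v,k)})$, so that $x_i = 0$ precisely when $W_{(v,i)}$ is a descent top.

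First I would record the effect of $f_{W,v}$ on descent-top statuses. By construction, $f_W$ restricted to $W\vert_{\v=v}$ is $f_{W,v}$, and $f_{W,v}$ sends $W_{(v,j)}$ to the $j$-th entry $W^{\prime}_{(v,j)}$ of $T_{v_{\p}}$; moreover $W^{\prime}_{(v,j)}$ is popped from $S_{\text{descent top}}$ exactly when $x^{\prime}_{(v,j)} = 0$, so $(x^{\prime}_{(v,1)}, \dots, x^{\prime}_{(v,k)})$ is the non-descent-top tuple of $(f_W(W_{(v,1)}), \dots, f_W(W_{(v,k)}))$ (this is point (1) in the proof of \cref{lma18}). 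Combining this with the defining relation $x^{\prime}_{(v,j)} = x_{(v,k+1-j)}$ gives, for each $q \in [k]$,
\begin{align*}
f_W(W_{(v,q)}) \text{ is a descent top}
&\iff x^{\prime}_{(v,q)} = 0 \iff x_{(v,k+1-q)} = 0\\
&\iff W_{(v,k+1-q)} \text{ is a descent top}.
\end{align*}

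With this translation, the right-hand multiset becomes $\{(W_{(v,p)}, W_{(v,q)}) : p < q,\ W_{(v,k+1-q)} \text{ is a descent top}\}$, and I would finish by exhibiting the involution on index pairs $(p,q) \mapsto (k+1-q,\, k+1-p)$: since $p<q$ forces $k+1-q < k+1-p$, this lands in a pair of the same shape, it is self-inverse, and it carries the left-hand selection criterion ``$W_{(v,p)}$ is a descent top'' to ``$W_{(v,k+1-(k+1-p))} = W_{(v,p)}$ is a descent top'', which, after the substitution, is precisely the right-hand criterion. Hence the two multisets have the same size. Equivalently, one may just compute both cardinalities as $\sum_{i}(k-i)$ taken over the indices $i$ with $W_{(v,i)}$ a descent top, reindexing the right-hand sum by $r = k+1-q$.

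The only point needing care is the bookkeeping in the second paragraph, i.e.\ correctly reading off from the definition of $f_{W,v}$ that the non-descent-top statuses of the output tuple are the reverse of those of the input, so that a constraint on $f(W_{(v,q)})$ becomes a constraint on $W_{(v,k+1-q)}$; once that identity is in hand, the lemma is a one-line counting (or bijection) argument and there is no real obstacle.
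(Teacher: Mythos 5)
Your proposal is correct and follows essentially the same route as the paper: both hinge on the identity that $f_W(W_{(v,j)})$ is a descent top iff $W_{(v,k+1-j)}$ is (read off from $x^{\prime}_{(v,j)} = x_{(v,k+1-j)}$ and the queue construction), and both then apply the index involution $(p,q)\mapsto(k+1-q,\,k+1-p)$ to match the two sets. Your write-up is in fact a bit cleaner, since you state the status-reversal identity explicitly before invoking the bijection rather than verifying it inside the case analysis.
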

\begin{proof}
We denote:
\begin{align*}
    M=\{(W_{(v,p)},W_{(v,q)}):W_{(v,p)}\text{ is descent top},\d(W_{(v,p)}) < \d(W_{(v,q)})\}\\
    N=\{(W_{(v,p)},W_{(v,q)}):f(W_{(v,q)})\text{ is descent top},\d(W_{(v,p)}) < \d(W_{(v,q)})\}
\end{align*}
The thing need to be proved equivalent to $\mid M\mid =\mid N\mid $.\par
According to $f_W$ construction, let $T_{v_e} = (W_{(v,1)},W_{(v,2)},\dots W_{(v,k)})$ be a tuple of all 4-tuple-letters in $W\vert _{\v=v}$ such that $\d(W_{(v,i)}) > \d(W_{(v,j)})$. Let $M^{\prime}=\{(W_{(v,p)},W_{(v,q)}):W_{(v,p)}\text{ is descent top}, W_{(v,p)}\in T_{v_e}, W_{(v,q)}\in T_{v_e}, p < q\}$. Similarly, let $N^{\prime}=\{(W_{(v,p)},W_{(v,q)}):f(W_{(v,q)})\text{ is descent top}, W_{(v,p)}\in T_{v_e}, W_{(v,q)}\in T_{v_e}, p < q\}$. It's clear that:
\begin{align*}
     M=M^{\prime} \text{ and } N=N^{\prime}
\end{align*}

We define a function $g:(W_{(v,p)},W_{(v,q)})\mapsto(W_{k+1-q},W_{k+1-p})$. We shall prove that with $x$ in $M^{\prime}$, the image of $x$ under $g$ is in $N^{\prime}$.\numbAtBack{a}

\noindent
Let $(W_{v_{p_0}},W_{v_{q_0}})$ be a tuple in $M^{\prime}$. Let $x_{v_{p_0}}$ and $x_{v_{q_0}}$ be a non-descent-top status of $W_{v_{p_0}}$ and $W_{v_{q_0}}$, respectively. Consider a pair $(W_{v_{k+1-q_0}},W_{v_{k+1-p_0}})$ in the set $M^{\prime}$. 
\begin{itemize}
    \item $W_{v_{k+1-q_0}}\in T_{v_e}$ and $W_{v_{k+1-p_0}}\in T_{v_e}$
    \item $W^{\prime}_{v_{k+1-p_0}}$ is the image of $W_{v_{k+1-p_0}}$ under $f_W$ and has non-descent-top status $x^{\prime}_{v_{k+1-p_0}} = x_{v_{p_0}}$. So that $f_W(W_{v_{k+1-p_0}})$ is a descent top if and only if $W_{v_{p_0}}$ is the descent top. Hence, $f_W(W_{v_{k+1-p_0}})$ is a descent top
    \item $k+1-q_0 < k+1-q_0$, since $p_0 < q_0$
\end{itemize}
Hence the tuple $(W_{v_{k+1-q_0}},W_{v_{k+1-p_0}})$ is in set $M^{\prime}$.

\noindent
Similarly, we can also prove that with $x \in N^{\prime}$, $g(x) \in M^{\prime}$.\numbAtBack{b}\\
(a) and (b) shows that:
\begin{align*}
    \mid N^{\prime}\mid  = \mid M^{\prime}\mid 
\end{align*}
This completes the proof of \cref{lma7}
\end{proof}

The following lemma can be proved similarly as \cref{lma7}
\begin{Lem}\label{lma8}
Given a word $w=w_1w_2\dots w_n$ in $\Sset_{(A,m)}$ with its set of 4-tuple-letter $W=\{W_1,\dots W_n\}$. Let $W\vert _{\v=v}=\{W_{(v,1)},W_{(v,2)},\dots W_{(v,k)}\}$ is the set of all $W_i \in W$ such that $\v(W_i) = v$. Then
\begin{multline*}
    \mid \{(W_{(v,p)},W_{(v,q)}):W_{(v,q)}\text{ is descent top},\d(W_{(v,p)}) < \d(W_{(v,q)})\}\mid \\ =\mid \{(W_{(v,p)},W_{(v,q)}):f(W_{(v,p)})\text{ is descent top},\d(W_{(v,p)}) < \d(W_{(v,q)})\}\mid 
\end{multline*}
\end{Lem}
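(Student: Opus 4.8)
The plan is to mirror, coordinate-for-coordinate, the argument already used for \cref{lma7}, exchanging the first and second entries of the pairs throughout. First I would set
\[
M=\{(W_{(v,p)},W_{(v,q)}):W_{(v,q)}\text{ is descent top},\ \d(W_{(v,p)})<\d(W_{(v,q)})\},
\]
\[
N=\{(W_{(v,p)},W_{(v,q)}):f(W_{(v,p)})\text{ is descent top},\ \d(W_{(v,p)})<\d(W_{(v,q)})\},
\]
so that the claim becomes $\mid M\mid=\mid N\mid$. Exactly as in \cref{lma7}, I would re-index along the tuple $T_{v_e}=(W_{(v,1)},\dots,W_{(v,k)})$ whose entries are listed in increasing duplicate index, turning the $\d$-comparison into a comparison of tuple positions; writing $M'$ and $N'$ for the corresponding sets of pairs $(W_{(v,p)},W_{(v,q)})$ with $p<q$, one checks $M=M'$ and $N=N'$.

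Next I would invoke the same order-reversing map $g:(W_{(v,p)},W_{(v,q)})\mapsto(W_{(v,k+1-q)},W_{(v,k+1-p)})$, which is an involution on ordered pairs. The only structural input needed is the defining property of $f_{W,v}$, namely the reversal $x'_{(v,i)}=x_{(v,k+1-i)}$ of the non-descent-top status tuple (the $\omega$ map in the construction of $f_{W,v}$, also used in the proof of \cref{lma18}): it says precisely that $f_W(W_{(v,i)})$ is a descent top if and only if $W_{(v,k+1-i)}$ is a descent top. Using this I would verify $g(M')\subseteq N'$: if $(W_{(v,p)},W_{(v,q)})\in M'$ then $W_{(v,q)}$ is a descent top, hence $f_W(W_{(v,k+1-q)})$ is a descent top, and since $k+1-q<k+1-p$ the image $(W_{(v,k+1-q)},W_{(v,k+1-p)})$ lies in $N'$. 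Symmetrically $g(N')\subseteq M'$: if $f_W(W_{(v,p)})$ is a descent top then $W_{(v,k+1-p)}$ is a descent top, which is the second coordinate of the image pair, so the image lands in $M'$. Being an involution, $g$ is then a bijection $M'\to N'$, and therefore $\mid M\mid=\mid N\mid$.

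I expect this to be routine given \cref{lma7}; the one point needing genuine care is the bookkeeping of which coordinate carries the descent-top condition. In \cref{lma7} the condition sits on the first coordinate on the left and on the $f$-image of the second coordinate on the right, whereas here it is the second coordinate on the left and the $f$-image of the first coordinate on the right. The main thing to double-check is thus that under $g$ the descent-top letter $W_{(v,q)}$ is matched with $f_W(W_{(v,k+1-q)})$ — which it is, as that is the first coordinate of $g$'s output — so that the descent-top constraint defining $M'$ becomes exactly the descent-top constraint defining $N'$. Once that matching is confirmed, the rest of the argument is identical to the proof of \cref{lma7}.
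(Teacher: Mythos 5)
Your proof is correct and is exactly the adaptation the paper intends: the paper gives no separate proof of \cref{lma8}, stating only that it follows "similarly" from \cref{lma7}, and you carry out that analogy with the same sets $M'$, $N'$, the same involution $g$, and the same key property $x'_{(v,i)}=x_{(v,k+1-i)}$ of $f_{W,v}$. The coordinate bookkeeping you flag — that $g$ sends the descent-top second coordinate $W_{(v,q)}$ to the first coordinate $W_{(v,k+1-q)}$, whose $f$-image is a descent top precisely when $W_{(v,q)}$ is — is checked correctly, so nothing is missing.
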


\begin{Lem}\label{lma10}
Given a word $w=w_1w_2\dots w_n$ in $\Sset_{(A,m)}$ with its set of 4-tuple-letter $W=\{W_1,\dots W_n\}$. Let $U = \{U_1,U_2,\dots ,U_n\}$ in which $U_i$ is the image of $W_i$ under $\theta_W$. Let $w^{\prime} = W^{\prime}_1\dots W^{\prime}_n$ be a word constructed from $U$ under $\zeta$. Let $v$ be a letter in $w$. Then
\begin{align}
    \label{elma101}
    (\underline{21}2\vert _{2=v}) \text{ in } w = (2\underline{21}\vert _{2=v}) \text{ in } w^{\prime}\\
    \label{elma102}
    (2\underline{21}\vert _{2=v}) \text{ in } w = (\underline{21}2\vert _{2=v}) \text{ in } w^{\prime}
\end{align}
\end{Lem}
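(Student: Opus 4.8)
The plan is to reduce both identities \eqref{elma101} and \eqref{elma102} to \cref{lma7} and \cref{lma8} respectively, using the observation from \cref{rem7} that each of these four pattern counts depends only on which of the value-$v$ letters are descent tops and on their left-to-right order. First I would rewrite the left-hand sides in terms of duplicate indices. Among letters of a fixed value $v$, position order coincides with the duplicate-index order (\cref{index}), and the descent-top status of the $p$-th value-$v$ letter of $w$ is precisely the position (insider or closer) recorded in $W_{(v,p)}$. Writing $W\vert_{\v=v}=\{W_{(v,1)},\dots,W_{(v,k)}\}$, this gives
\begin{align*}
(\underline{21}2\vert_{2=v})\,w &= \bigl\lvert\{(p,q): p<q,\ W_{(v,p)}\text{ is a descent top}\}\bigr\rvert, \\
(2\underline{21}\vert_{2=v})\,w &= \bigl\lvert\{(p,q): p<q,\ W_{(v,q)}\text{ is a descent top}\}\bigr\rvert,
\end{align*}
the difference being which copy of $v$ in the pair carries the descent.

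Next I would do the same for $w'$. Since $w'$ is obtained from $U$ under $\zeta$ and $\zeta=\delta^{-1}$ (\cref{lma24}, using \cref{lma13}), we have $\delta(w')=U$, so the 4-tuple-letter of the $q$-th value-$v$ letter of $w'$ is exactly $U_{(v,q)}=\theta_W(W_{(v,q)})$; hence that letter is a descent top iff $U_{(v,q)}$ is. By the construction of $\theta_W$ together with \cref{lma2}, $U_{(v,q)}$ is a descent top iff $f_W(W_{(v,q)})$ is, where $f_W$ maps $W\vert_{\v=v}$ into itself preserving values (so this is a well-posed condition). Therefore
\begin{align*}
(2\underline{21}\vert_{2=v})\,w' &= \bigl\lvert\{(p,q): p<q,\ f_W(W_{(v,q)})\text{ is a descent top}\}\bigr\rvert, \\
(\underline{21}2\vert_{2=v})\,w' &= \bigl\lvert\{(p,q): p<q,\ f_W(W_{(v,p)})\text{ is a descent top}\}\bigr\rvert.
\end{align*}
Identifying $p=\d(W_{(v,p)})$ and $q=\d(W_{(v,q)})$, equation \eqref{elma101} now says exactly that the first line of the first display equals the first line of the second display, which is \cref{lma7}; and \eqref{elma102} says exactly that the second lines agree, which is \cref{lma8}. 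That completes the argument.

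The only genuine obstacle is the bookkeeping that tracks which member of each pair $(p,q)$ is a descent top as we pass from $w$ through $W=\delta(w)$, $U=\epsilon(W)$, and back to $w'=\zeta(U)$: the pattern $\underline{21}2$ pins the descent on the left copy of $v$ while $2\underline{21}$ pins it on the right copy, and $\epsilon$ --- via $f_{W,v}$ and the index reversal $\omega$ --- is precisely what interchanges these two roles. Once the translation of the pattern counts into statements about the tuples $(x_{(v,1)},\dots,x_{(v,k)})$ of (non-)descent-top statuses is set up correctly, \cref{lma7} and \cref{lma8} do all the remaining work.
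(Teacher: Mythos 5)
Your proposal is correct and follows essentially the same route as the paper: both arguments translate the four pattern counts into statements about which member of each pair of equal-valued letters (indexed by duplicate index) is a descent top, use the fact that $\theta_W$ preserves value and duplicate index while transferring descent-top status via $f_W$ (through \cref{lma2} and $\delta(w')=U$), and then reduce \eqref{elma101} to \cref{lma7} and \eqref{elma102} to \cref{lma8}. The only cosmetic difference is that you compute the counts on $w$ and $w'$ separately and match them, whereas the paper pushes the count on $w$ forward through $f_W$ and $\theta_W$ in a single chain of equalities.
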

\begin{proof}
\begin{align}\label{elma103}
    \nonumber
    (\underline{21}2\vert _{2=v}) \text{ in } w 
    &= \mid \{(i,j):\v(w_i)=\v(w_j)=v , i<j , w_i\text{ is descent top}\}\mid \\
    &= \mid \{(W_i,W_j):\v(W_i)=\v(W_j)=v , i<j , W_i\text{ is descent top}\}\mid 
\end{align}
According to the way we define duplicate index in $w$:
\begin{alignat*}{1}
  \begin{cases} \v(W_i)=\v(W_j) \\ i < j \end{cases} &\iff \begin{cases} \v(W_i)=\v(W_j) \\ \d(W_i) < \d(W_j) \end{cases}
\end{alignat*}
Let $W\vert _{\v=v} = \{W_{(v,1)},W_{(v,2)},\dots\}$ be a set of all 4-tuple-letter that have value is $v$ in $W$. Assume $i=v_h$ and $j=v_k$
\begin{align}\label{elma104}
    \cref{elma103} = \mid \{(W_{(v,h)},W_{(v,k)}):\d(W_{(v,h)})<\d(W_{(v,k)}), W_{(v,h)}\text{ is descent top}\}\mid 
\end{align}
Applying the \cref{lma7}
\begin{align}
    \nonumber
    \cref{elma104} 
    &= \mid \{(W_{(v,h)},W_{(v,k)}):\d(W_{(v,h)})<\d(W_{(v,k)}), f(W_{(v,k)})\text{ is descent top}\}\mid \\
    \nonumber
    &= \mid \{(W_i,W_j) : \d(W_i) < \d(W_j) , \v(W_i)=\v(W_j)=v , f(W_j)\text{ is descent top}\}\mid \\
    \label{elma105}
    &= \mid \{(\theta_W(W_i),\theta_W(W_j)) : \d(W_i) < \d(W_j) , \v(W_i)=\v(W_j)=v , f(W_j)\text{ is descent top}\}\mid 
\end{align}
Since $\theta_W$ maintain the duplicate index and value of 4-tuple-letter
\begin{multline}\label{elma106}
    \cref{elma105}= \mid \{(U_i,U_j):\d(U_i)<\d(U_j), \v(U_i)=\v(U_j)=v, f(W_j)\text{ is descent top}\mid 
\end{multline}
Follow the \cref{lma2}, $f_W(W_j)$ is descent top if and only if $U_j$ is a descent top
\begin{align*}
    \cref{elma106} &= \mid \{(U_i,U_j):\d(U_i)<\d(U_j), \v(U_i)=\v(U_j)=v, U_j\text{ is descent top}\mid \\
    &= \mid \{({i^{\prime}}, {j^{\prime}}) : \v(W^{\prime}_{i^{\prime}}) = \v(W^{\prime}_{j^{\prime}}) = v , i^{\prime}<j^{\prime}, W^{\prime}_{j^{\prime}} \text{ is descent top}\}\mid \\
    &= (2\underline{21}\vert _{2=v}) \text{ in } w^{\prime}
\end{align*}
The equation \cref{elma101} has been proved.\\
The equation \cref{elma102} can be proved similarly, with the application of \cref{lma8}.
\end{proof}

\begin{Lem}\label{lma11}
Given a word $w=w_1w_2\dots w_n$ in $\Sset_{(A,m)}$ with its set of 4-tuple-letter $W=\{W_1,\dots W_n\}$. Let $U = \{U_1,U_2,\dots ,U_n\}$ in which $U_i$ is the image of $W_i$ under $\theta_W$. Let $w^{\prime} = W^{\prime}_1\dots W^{\prime}_n$ be a word constructed from $U$ under $\zeta$. Let $v$ be a letter in $w$. Then:
\begin{align}\label{elma111}
    (\underline{21}1\vert _{1=v}) \text{ in } w = (\underline{21}1\vert _{1=v}) \text{ in } w^{\prime}\\
    \label{elma112}
    (1\underline{21}\vert _{1=v}) \text{ in } w = (1\underline{21}\vert _{1=v}) \text{ in } w^{\prime}
\end{align}
\end{Lem}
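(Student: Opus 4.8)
The plan is to follow the same three-step scheme used in the proof of \cref{lma10} — first rewrite the pattern count in terms of the 4-tuple-letters of $w$, then transport it through $\theta_W$, and finally translate it into a statement about $w'=\zeta(U)$ — except that the argument is shorter here, because $\theta_W$ preserves \emph{descent-bottom} status directly (rather than reversing it the way it shuffles descent-top status), so neither \cref{lma7} nor \cref{lma8} will be needed.

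First I would rewrite the left-hand side of \cref{elma111}. By the definition of the pattern, $(\underline{21}1\vert_{1=v})$ in $w$ counts the pairs $(i,j)$ with $\v(w_i)=\v(w_j)=v$, $i<j$, and $w_i$ a descent bottom; passing to 4-tuple-letters this is $\mid \{(W_i,W_j):\v(W_i)=\v(W_j)=v,\ i<j,\ W_i\text{ is a descent bottom}\}\mid$. Since the duplicate index enumerates equal-valued letters from left to right (\cref{index}), for two value-$v$ letters we have $i<j\iff\d(W_i)<\d(W_j)$, so writing $W\vert_{\v=v}=\{W_{(v,1)},\dots,W_{(v,k)}\}$ the count equals
\begin{align*}
  \mid \{(W_{(v,p)},W_{(v,q)}):\d(W_{(v,p)})<\d(W_{(v,q)}),\ W_{(v,p)}\text{ is a descent bottom}\}\mid .
\end{align*}
Next I would push this count through $\theta_W$: this map preserves value and duplicate index by construction, and by \cref{lma2} the 4-tuple-letter $U_i=\theta_W(W_i)$ is a descent bottom if and only if $W_i$ is. Hence $\theta_W$ restricts to a bijection from the pair-set above onto $\{(U_{(v,p)},U_{(v,q)}):\d(U_{(v,p)})<\d(U_{(v,q)}),\ U_{(v,p)}\text{ is a descent bottom}\}$, so the two cardinalities agree. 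Finally I would translate this back into $w'$: since $U$ is well-defined (\cref{lma20}), hence consistent (\cref{lma22}), the word $w'=\zeta(U)$ satisfies $\delta(w')=U$ by \cref{lma24} (using \cref{lma13}, \cref{the1}, \cref{the2}), so each $U_i$ is the 4-tuple-letter of exactly one position of $w'$ — whose letter $W'_{j'}$ therefore has the same value, duplicate index, and position, in particular the same descent-bottom status, as $U_i$ — and the duplicate-index order of the value-$v$ letters of $w'$ is their left-to-right order. Consequently the last displayed count equals $\mid \{(i',j'):\v(W'_{i'})=\v(W'_{j'})=v,\ i'<j',\ W'_{i'}\text{ is a descent bottom}\}\mid=(\underline{21}1\vert_{1=v})$ in $w'$, which is \cref{elma111}.

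For \cref{elma112} the three steps are identical, with the descent-bottom status tracked on the \emph{second} coordinate instead: $(1\underline{21}\vert_{1=v})$ in $w$ counts the pairs $(W_{(v,p)},W_{(v,q)})$ with $\d(W_{(v,p)})<\d(W_{(v,q)})$ and $W_{(v,q)}$ a descent bottom, and the same rewrite, the $\theta_W$-transport via \cref{lma2}, and the translation to $w'$ via \cref{lma24} give the claim. I do not expect a real obstacle here: the only point demanding care is the bookkeeping in the last step — checking that the passage $U\mapsto w'=\zeta(U)$ genuinely preserves positions (hence descent-bottom status) and sends duplicate-index order to left-to-right order — but this is exactly what the bijectivity of $\delta$ and $\zeta$ (\cref{the1}, \cref{the2}, \cref{lma24}) together with \cref{index} supply, just as in the proof of \cref{lma10}.
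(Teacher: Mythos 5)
Your proposal is correct and follows essentially the same route as the paper's own proof: rewrite the count over pairs of value-$v$ 4-tuple-letters ordered by duplicate index, transport it through $\theta_W$ using the preservation of value, duplicate index, and (via \cref{lma2}) descent-bottom status, and then read the result off in $w'=\zeta(U)$. Your observation that no analogue of \cref{lma7}/\cref{lma8} is needed here, because descent-bottom status is preserved directly rather than reversed, matches the paper's remark that this proof differs from that of \cref{lma10} only in relying on the preservation of descent-bottom status under $\theta_W$.
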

\begin{proof}
The proof of this lemma is quite similar to the proof in \cref{lma10}, but it depends on the reservation of descent bottom status of $W_i$ under $\theta$.
\begin{align}\label{elma113}
    \nonumber
    (\underline{21}1\vert _{1=v}) \text{ in } w 
    &= \mid \{(i,j):\v(w_i)=\v(w_j)=v , i<j , w_i\text{ is descent bottom}\}\mid \\
    &= \mid \{(W_i,W_j):\v(W_i)=\v(W_j)=v , i<j , W_i\text{ is descent bottom}\}\mid 
\end{align}
According to the way we define duplicate index in $w$:
\begin{alignat*}{1}
  \begin{cases} \v(W_i)=\v(W_j) \\ i < j \end{cases} &\iff \begin{cases} \v(W_i)=\v(W_j) \\ \d(W_i) < \d(W_j) \end{cases}
\end{alignat*}
Let $W\vert _{\v=v} = \{W_{(v,1)},W_{(v,2)},\dots\}$ be a set of all 4-tuple-letter that have value is $v$ in $W$. Assume $i=v_h$ and $j=v_k$
\begin{align}\label{elma114}
\nonumber
    \cref{elma113} 
    &= \mid \{(W_{(v,h)},W_{(v,k)}):\d(W_{(v,h)})<\d(W_{(v,k)}), W_{(v,h)}\text{ is descent bottom}\}\mid \\
    &= \mid \{(\theta_W(W_i),\theta_W(W_j)) : \d(W_i) < \d(W_j) , \v(W_i)=\v(W_j)=v , W_i\text{ is descent bottom}\}\mid 
\end{align}
Since $\theta_W$ maintain the duplicate index and value of 4-tuple-letter
\begin{multline}\label{elma115}
    \cref{elma114}= \mid \{(U_i,U_j):\d(U_i)<\d(U_j), \v(U_i)=\v(U_j)=v, W_i\text{ is descent bottom}\mid 
\end{multline}
Follow the \cref{lma2}, $W_i$ is descent bottom if and only if $U_i$ is a descent bottom. Hence
\begin{align*}
    \cref{elma115} &= \mid \{(U_i,U_j):\d(U_i)<\d(U_j), \v(U_i)=\v(U_j)=v, U_i\text{ is descent bottom}\mid \\
    &= \mid \{({i^{\prime}}, {j^{\prime}}) : \v(W^{\prime}_{i^{\prime}}) = \v(W^{\prime}_{j^{\prime}}) = v , i^{\prime}<j^{\prime}, W^{\prime}_{i^{\prime}} \text{ is descent bottom}\}\mid \\
    &= (\underline{21}1\vert _{1=v}) \text{ in } w^{\prime}
\end{align*}
The equation \cref{elma111} has been proved. The equation \cref{elma112} can be proved similarly. 
\end{proof}

\begin{The}\label{the4}
\begin{multline*}
    ((2\underline{31}),(\underline{31}2),(\underline{21}1),(1\underline{21}),(\underline{21}2),(2\underline{21}),\underline{21})w\\
    = ((2\underline{31}),(\underline{31}2),(\underline{21}1),(1\underline{21}),(2\underline{21}),(\underline{21}2),\underline{21})\Phi(w)
\end{multline*}
\end{The}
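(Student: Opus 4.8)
The plan is to verify the seven scalar identities packed into the tuple equation separately, each time rewriting the pattern count on $w$ and on $\Phi(w)$ in terms of data that the earlier lemmas already identify as invariant under the construction. Write $W=\delta(w)$ and $U=\epsilon(W)$, so that $\Phi(w)=\zeta(U)$; since $\zeta=\delta^{-1}$ (\cref{lma24}) we have $\delta(\Phi(w))=U$, hence the $n$ letters of $\Phi(w)$ carry, one apiece, the $4$-tuple-letters $U_i=\theta_W(W_i)$, and $W_i\mapsto U_i$ is a bijection $W\to U$ preserving $\v$, $\d$ and $\r$.

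The descent coordinate and the two embracing-sum coordinates are direct. For $\underline{21}$: $(\underline{21})w$ is the number of descents of $w$, i.e.\ $\mid\Destop(W)\mid$ (\cref{rem9}); by \cref{lma3}, $\Destop(W)=\Destop(U)=\Destop(\delta(\Phi(w)))$, whence $(\underline{21})w=(\underline{21})\Phi(w)$. For $2\underline{31}$: $(2\underline{31})w=\Res(w)=\sum_i\r(W_i)$ by \cref{rem4}; because $\delta(\Phi(w))=U$, the right embracing numbers of the letters of $\Phi(w)$ are exactly the fourth coordinates $\r(U_i)$, and $\r(U_i)=\r(W_i)$ as $\theta_W$ preserves $\r$, so $(2\underline{31})\Phi(w)=\Res(\Phi(w))=\sum_i\r(U_i)=\sum_i\r(W_i)=(2\underline{31})w$. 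For $\underline{31}2$: apply \cref{lma6} letter by letter to get $\l(W'_j)=\l(w_i)$, where $W'_j$ is the letter of $\Phi(w)$ presenting $U_i$; summing over all letters gives $(\underline{31}2)\Phi(w)=\Les(\Phi(w))=\Les(w)=(\underline{31}2)w$.

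For the four remaining coordinates I would split each count according to the common value $v\in A$ of the repeated letter of an occurrence (the descent bottom for $\underline{21}1$ and $1\underline{21}$, the descent top for $\underline{21}2$ and $2\underline{21}$); this value ranges over $A$ with no overlap, so $(\underline{21}1)w=\sum_{v\in A}(\underline{21}1\vert_{1=v})w$, and likewise $(1\underline{21})w=\sum_{v}(1\underline{21}\vert_{1=v})w$, $(\underline{21}2)w=\sum_{v}(\underline{21}2\vert_{2=v})w$, $(2\underline{21})w=\sum_{v}(2\underline{21}\vert_{2=v})w$, with the corresponding four decompositions holding verbatim for $\Phi(w)$. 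Now \cref{lma11} gives, for every $v$, $(\underline{21}1\vert_{1=v})w=(\underline{21}1\vert_{1=v})\Phi(w)$ and $(1\underline{21}\vert_{1=v})w=(1\underline{21}\vert_{1=v})\Phi(w)$, while \cref{lma10} gives $(\underline{21}2\vert_{2=v})w=(2\underline{21}\vert_{2=v})\Phi(w)$ and $(2\underline{21}\vert_{2=v})w=(\underline{21}2\vert_{2=v})\Phi(w)$; summing over $v$ yields $(\underline{21}1)w=(\underline{21}1)\Phi(w)$, $(1\underline{21})w=(1\underline{21})\Phi(w)$, $(\underline{21}2)w=(2\underline{21})\Phi(w)$ and $(2\underline{21})w=(\underline{21}2)\Phi(w)$, which are precisely the entries of the asserted tuple identity with positions five and six interchanged. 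Assembling the seven equalities proves the theorem.

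Since \cref{lma3}, \cref{lma6}, \cref{lma10} and \cref{lma11} carry the whole analytic burden, essentially no obstacle remains for this statement; the only care needed is bookkeeping — that $\delta(\Phi(w))=U$ so the reconstructed word genuinely realizes the $\r$-data recorded in $U$ (this is \cref{lma13} combined with \cref{lma24}), and that the value-indexed splitting of each pattern count is both exhaustive and disjoint, which holds because the value of the repeated letter of any occurrence is a well-defined element of $A$.
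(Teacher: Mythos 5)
Your proposal is correct and follows essentially the same route as the paper's own proof: it establishes $(\underline{21})$, $(2\underline{31})$, and $(\underline{31}2)$ invariance via \cref{lma3}, the preservation of $\r$ under $\theta_W$, and \cref{lma6} respectively, and then handles the remaining four coordinates by decomposing each count over the value $v\in A$ of the repeated letter and applying \cref{lma10} and \cref{lma11}. The only difference is that you make the bookkeeping step $\delta(\Phi(w))=U$ (via \cref{lma13} and \cref{lma24}) explicit, which the paper leaves implicit.
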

\begin{proof}
We denote $w=w_1w_2\dots w_n$ and its set of 4-tuple-letter $W=\{W_1,\dots W_n\}$. Let $\Phi(w) = w^{\prime} = W^{\prime}_1\dots W^{\prime}_n$ and its 4-tuple-letter set $U$. Since $U$ is the image of $W$ under $\epsilon$, we can denote $U = \{U_1,U_2,\dots ,U_n\}$ in which $U_i$ is the image of $W_i$ under $\theta_W$.\\
Since we preserve the right embracing numbers through $\delta$, $\zeta$, and $\epsilon$,
\begin{align*}
    \r(U_i) = \r(W^{\prime}_j) = \r(W_i) = \l(w_i) \qquad \forall i \in \overline{1,n}
\end{align*}
Since each $W^{\prime}_j$ is the presence of only one $U_i$,
\begin{align*}
    \sum_{j=0}^{n}\r(W^{\prime}_j) &= \sum_{i=0}^{n}\l(w_i)\\
    \Res(w) &= \Res(\Phi(w))
\end{align*}
According to the definition \cref{def2} , the right embracing sum of the word in our consideration is counted by the number of occurrences of pattern 2\underline{31}.
\begin{align*}
    (2\underline{31})w = (2\underline{31})\Phi(w)
\end{align*}
Similarly, from \cref{lma6}, we conclude that:
\begin{align*}
    (\underline{31}2)w = (\underline{31}2)\Phi(w)
\end{align*}
Follow the \cref{lma3}, $\Destop(W) = \Destop(U)$ and $\Desbot(W) = \Desbot(U)$, hence the number of descent in $w$ equals the number of descent in $\Phi(w)$.
\begin{align*}
    (\underline{21})w = (\underline{21})\Phi(w)
\end{align*}

We shall compare $(\underline{21}1)$, $(1\underline{21})$, $(\underline{21}2)$, $(2\underline{21})$ of $w$ with $(\underline{21}1)$, $(1\underline{21})$, $2(\underline{21})$, $(\underline{21}2)$ of $\Phi(w)$, respectively.

We have
\begin{align}
    \label{the401}
    (\underline{21}1)w &= \sum_{v \in A}(\underline{21}1\vert _{1=v})w\\
    \label{the402}
    (\underline{21}1)\Phi(w) &= \sum_{v \in A}(\underline{21}1\vert _{1=v})\Phi(w)\\
    \label{the407}
    (\underline{21}1)w &= \sum_{v \in A}(\underline{21}1\vert _{1=v})w\\
    \label{the408}
    (\underline{21}1)\Phi(w) &= \sum_{v \in A}(\underline{21}1\vert _{1=v})\Phi(w)\\
    \label{the403}
    (2\underline{21})w &= \sum_{v \in A}(2_v\underline{21})w\\
    \label{the404}
    (\underline{21}2)\Phi(w) &= \sum_{v \in A}(\underline{21}2\vert _{2=v})\Phi(w)\\
    \label{the405}
    (\underline{21}2)w &= \sum_{v \in A}(\underline{21}2\vert _{2=v})w\\
    \label{the406}
    (2\underline{21})\Phi(w) &= \sum_{v \in A}(2\underline{21}\vert _{2=v})\Phi(w)
\end{align}
Follow the \cref{lma11}, for all $v$ in $A$, $(\underline{21}1\vert _{1=v})w = (\underline{21}1\vert _{1=v})\Phi(w)$. Applying this into \cref{the401} and \cref{the402}
\begin{align*}
    (\underline{21}1)w = (\underline{21}1)\Phi(w)
\end{align*}
Similarly, applying \cref{elma112} into \cref{the407} and \cref{the408};
\begin{align*}
    (1\underline{21})w = (1\underline{21})\Phi(w)
\end{align*}
Applying \cref{lma10} into \cref{the403}, \cref{the404}, \cref{the405} and \cref{the406} implies that
\begin{align*}
    (2\underline{21})w = (\underline{21}2)\Phi(w)\\
    (\underline{21}2)w = (2\underline{21})\Phi(w)
\end{align*}
\end{proof}
We notice that according to the \cref{lma3}, $\des w = \des \Phi(w)$. Therefore, the \cref{the5} and the \cref{the4} implies that:
\begin{Cor}
The involution $\Phi$ is an involution of $\Sset_{(A,m)}$ having the property:
\begin{align*}
    (\des, \mad_3)w &= (\des, \mad_1) \Phi(w)\\
    (\des, \mad_4)w &= (\des, \mad_2) \Phi(w)\\
    (\des, \madl_3)w &= (\des, \madl_1) \Phi(w)\\
    (\des, \madl_4)w &= (\des, \madl_2) \Phi(w)
\end{align*}
\end{Cor}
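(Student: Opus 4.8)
The plan is to obtain the Corollary essentially for free from the results already proved. \cref{the5} says $\Phi$ is an involution of $\Sset_{(A,m)}$; \cref{the4} is the master seven-pattern identity relating the occurrence counts of $w$ and of $\Phi(w)$; and \cref{lma3}, through $\Destop(W)=\Destop(U)$ and $\Desbot(W)=\Desbot(U)$, gives $\des w = \des\Phi(w)$. So the only work left is a bookkeeping step: expand each of $\mad_3,\mad_4,\madl_3,\madl_4$ into vincular patterns as in \cref{tab1}, substitute the equalities coming from \cref{the4}, and recognise the outcome as $\mad_1,\mad_2,\madl_1,\madl_2$ respectively.

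First I would record the per-pattern equalities that \cref{the4} (and the lemmas \cref{lma6}, \cref{lma10}, \cref{lma11}, \cref{lma3} used to prove it) supplies, namely $(2\underline{31})w=(2\underline{31})\Phi(w)$, $(\underline{31}2)w=(\underline{31}2)\Phi(w)$, $(\underline{21}1)w=(\underline{21}1)\Phi(w)$, $(1\underline{21})w=(1\underline{21})\Phi(w)$, $(\underline{21}2)w=(2\underline{21})\Phi(w)$, $(2\underline{21})w=(\underline{21}2)\Phi(w)$ and $(\underline{21})w=(\underline{21})\Phi(w)$. The only patterns whose contributions are interchanged are the pair $\{\underline{21}2,2\underline{21}\}$; every other pattern count is preserved outright.

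Then I would carry out the substitution for each of the four statistics. For example, since $\mad_3 = 2\cdot 2\underline{31}+\underline{31}2+\underline{21}2+\underline{21}1+\underline{21}$,
\begin{align*}
\mad_3 w &= 2(2\underline{31})w + (\underline{31}2)w + (\underline{21}2)w + (\underline{21}1)w + (\underline{21})w\\
&= 2(2\underline{31})\Phi(w) + (\underline{31}2)\Phi(w) + (2\underline{21})\Phi(w) + (\underline{21}1)\Phi(w) + (\underline{21})\Phi(w)\\
&= \mad_1 \Phi(w),
\end{align*}
using $\mad_1 = 2\cdot 2\underline{31}+\underline{31}2+\underline{21}1+2\underline{21}+\underline{21}$. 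The computations $\mad_4 w=\mad_2\Phi(w)$ (where $\underline{21}1$ is replaced by $1\underline{21}$ everywhere), $\madl_3 w=\madl_1\Phi(w)$, and $\madl_4 w=\madl_2\Phi(w)$ have exactly the same shape: in each case the pair $\{\underline{21}2,2\underline{21}\}$ is the only thing that moves, which is precisely the swap built into \cref{the4}. Finally, appending $\des w=\des\Phi(w)$ from \cref{lma3} promotes each scalar identity to the corresponding pair identity in the statement, and \cref{the5} supplies the involution claim.

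I do not anticipate a genuine obstacle — all the mathematical content lives in \cref{the4}. The one thing that needs care is transcribing the coefficients of \cref{tab1} faithfully: several entries there repeat a pattern (for instance $2\underline{31}$ appears with coefficient $2$ in $\mad_1$, and $\underline{31}2$ appears twice in each $\madl_i$), so I would do the verification by collecting, pattern by pattern, the total coefficient on each side rather than by matching the printed summands in order.
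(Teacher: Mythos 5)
Your proposal is correct and matches the paper's own (much terser) argument exactly: the paper derives the Corollary by citing \cref{the5} for the involution claim, \cref{the4} for the pattern identities, and \cref{lma3} for $\des w = \des\Phi(w)$, leaving the expansion via \cref{tab1} implicit. Your explicit pattern-by-pattern bookkeeping, with the swap of $\underline{21}2$ and $2\underline{21}$ as the only moving part, is the right way to fill in that implicit step.
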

In \cite{CLARKE1997237}, $(\des, \mad_1)$, $(\des, \mad_2)$, $(\des, \madl_1)$, and $(\des, \madl_2)$ have been proved to be Eulerian-Mahonian. Therefore, all the pair $(\des, \mad_3)$, $(\des, \mad_4)$, $(\des, \madl_3)$, and $(\des, \madl_4)$ are also Eulerian-Mahonian.

\chapter{Conclusion}
Before summarizing the results of this thesis, we would like to emphasize how the idea about a bijection $\Phi$ has been made. The function $\Phi$ was constructed after defining $\mad$ and $\madl$ extensions. The first statistic we wanted to transform is $\mad_3$. After trying several possible approaches, we decided to create a bijection that transforms mad-3 to mad-1. Since these two statistics extensions have many similarities in their vincular patterns representations, we naturally want to construct a function transforming patterns into patterns. The function should maintain the patterns $2\underline{31}$, $\underline{31}2$, $\underline{21}$, $\underline{21}1$ and transform $\underline{21}2$ to $2\underline{21}$. We consider constructing $\Phi$ as an involution to support bijective proof. To achieve this, we require $\Phi$ also need to transform $2\underline{21}$ to $\underline{21}2$. We notice that $\underline{21}$ is the pattern presented for a decent block, in which the number $2$ presents for descent top. Naturally, we came up with the idea of swapping the descent top status of letters that have the same values but have opposite relative positions. Relative positions are places of all letter that has the same value, for instance, relative position of $4$-letters in word $4_124_2354_31$ is $4_1\ 4_2\ 4_3$. $4_1$'s place is opposite to $4_3$'s place.

\noindent
The skeleton concept in \cite{CLARKE1997237} gives us the idea for maintaining patterns $\underline{31}2$ and $2\underline{31}$ is to split the word into letters attached with their attributes (function $\delta$), transforms these letters into letters with expected attributes (function $\epsilon$) and then rearrange these into new word that represent correctly expected attributes (function $\zeta$). But since the skeleton algorithm in \cite{CLARKE1997237} is for permutations only, we tailored this algorithm into $n_i$ insertion algorithm. The $n_i$ insertion algorithm ensures  not only $\underline{31}2$ and $2\underline{31}$ statistics of letters but also ensures the desired relative order between letters, which affects to the counting of patterns $\underline{21}2$, $2\underline{21}$, $\underline{21}1$, $1\underline{21}$, are correct.

To sum up, in this thesis, we have defined all possible $\mad$, $\madl$ extensions that are Mahonian on repetitive permutations, which are also Eulerian-Mahonian statistics. Some of them were defined and proved to be Eulerian-Mahonian \cite{CLARKE1997237}, while the rest are new Eulerian-Mahonian statistics extensions on repetitive permutations. With the statistics extensions in \cite{CLARKE1997237}, we represent them as vincular patterns combinations. With the new statistics extensions, we construct with proofs an involution $\Phi$ that transform these new statistics extensions to the statistics extensions that have been proved to be Eulerian-Mahonian in \cite{CLARKE1997237}. The function $\Phi$ also preserves statistics $\des$, proving that these new statistics extensions are Eulerian-Mahonian.

\paragraph{Future works} We have tried to find all possible Mahonian statistics extensions on repetitive permutations not only for $\mad$, $\madl$ but also for other statistics. We have found a vincular patterns extension for $\stat^{\prime}$ promising to be Mahonian on repetitive permutations $$\stat^{\prime} = \underline{31}2 + \underline{13}2 + \underline{32}1 + \underline{21} + \underline{22}1 + \underline{21}1$$This vincular patterns combination is our next extension to pursue in the future.
\printbibliography[
heading=bibintoc,
title={References}
]

@article{CLARKE1996,
author = {Robert J Clarke and Einar Steingr{\'i}mssonn and Jiang Zeng},
year = {2000},
month = {06},
title = {New Euler-Mahonian permutation statistics}
}

@article{CLARKE1997237,
title = {New Euler–Mahonian Statistics on Permutations and Words},
journal = {Advances in Applied Mathematics},
volume = {18},
number = {3},
pages = {237-270},
year = {1997},
issn = {0196-8858},
doi = {10.1006/aama.1996.0506},
url = {https://www.sciencedirect.com/science/article/pii/S0196885896905060},
author = {Robert J Clarke and Einar Steingr{\'i}mssonn and Jiang Zeng}
}

@article{Babson2000,
author={Eric K. Babson and Einar Steingr{\'i}msson},
keywords = {Generalized permutation patterns; permutation; Mahonian statistic},
language = {eng},
pages = {B44b, 18 p.-B44b, 18 p.},
publisher = {Universität Wien, Fakultät für Mathematik},
title = {Generalized permutation patterns and a classification of the Mahonian statistics.},
volume = {44},
year = {2000},
}

@article{Denert,
author = {Dominique Foata and Doron Zeilberger},
title = {Denert's Permutation Statistic Is Indeed Euler-Mahonian},
journal = {Studies in Applied Mathematics},
volume = {83},
number = {1},
pages = {31-59},
doi = {10.1002/sapm199083131},
url = {https://onlinelibrary.wiley.com/doi/abs/10.1002/sapm199083131},
eprint = {https://onlinelibrary.wiley.com/doi/pdf/10.1002/sapm199083131},
abstract = {A conjecture by Marleen Denert concerning a bivariate statistic on the permutation group is proved. The statistic has the same distribution as the pair consisting of the number of descents and the major index.},
year = {1990}
}

@article{vincular,
author = {Sergey Kitaev and Vincent Vajnovszki},
year = {2014},
month = {08},
pages = {},
title = {Mahonian STAT on words},
volume = {116},
doi = {10.1016/j.ipl.2015.09.006}
}

@article{mahonWord, 
title={Combinatory Analysis},
author = {Percy A. MacMahon},
volume={45}, 
DOI={10.1017/S0025557200045605}, 
number={353}, 
journal={The Mathematical Gazette}, 
publisher={Cambridge University Press}, 
year={1961}, 
pages={277–277}}

\end{document}